\documentclass[10pt,reqno]{amsart}
\usepackage{amsmath,amsfonts,amsthm,amssymb}
\usepackage{datetime}
\usepackage[shortlabels]{enumitem}
\usepackage{tikz}
\usepackage{soul}
\usepackage{hyperref}

\newcommand{\R}{\mathbb{R}}
\newcommand{\N}{\mathbb{N}}
\newcommand{\id}{\operatorname{id}}

\newcommand{\inter}{\operatorname{int}}

\newcommand{\epi}{\operatorname{epi}}

\newcommand{\inner}[1]{\left\langle #1 \right\rangle}
\newcommand{\Hp}{\mathcal{H}^+_\infty(f)}
\newcommand{\G}{\mathcal{G}}
\newcommand{\Hm}{\mathcal{H}^-_0(f)}
\newcommand{\HmT}{\mathcal{H}^-_\infty(T)}
\newcommand{\HpT}{\mathcal{H}^+_\infty(T)}
\newcommand{\reach}{\operatorname{reach}}
\newcommand{\spn}{\operatorname{span}}
\newcommand{\supp}{\operatorname{supp}}
\newcommand{\mylambda}{t}

\newcommand{\one}{e_N} %\newcommand{\one}{\mathbf{1}}

\newcommand{\bs}{\setminus}

\theoremstyle{plain}
\newtheorem{theorem}{Theorem}[section]
\newtheorem{lemma}[theorem]{Lemma}

\newtheorem{corollary}[theorem]{Corollary}
\newtheorem{proposition}[theorem]{Proposition}

\theoremstyle{remark}
\newtheorem{remark}[theorem]{Remark}

\theoremstyle{definition}
\newtheorem{definition}[theorem]{Definition}
\newtheorem{example}[theorem]{Example}

\numberwithin{equation}{section}

%%%%%%%%%%%%%%%%%%%%%%%%%%%%%%%
%%% Revised version for LAA %%%
%%%%%%%%%%%%%%%%%%%%%%%%%%%%%%%

\begin{document}
\title[Nonlinear Perron-Frobenius theory]{A Unified Approach to Nonlinear Perron-Frobenius Theory}
%\title[Nonlinear Perron-Frobenius theory]{Nonlinear Perron-Frobenius Theory and Collatz-Wielandt numbers}
\author[B. Lins]{Brian Lins}
\date{}
\address{Brian Lins, Hampden-Sydney College}
\email{blins@hsc.edu}
\subjclass[2010]{Primary 47H07, 47H09, 47H10; Secondary 46T20, 15A69, 15A80, 91A15}
\keywords{Nonlinear Perron-Frobenius theory, existence and uniqueness of eigenvectors, order-preserving and homogeneous functions, nonexpansive maps, Collatz-Wielandt numbers, Hilbert's projective metric, real analytic functions, multiplicatively convex functions, topical maps, nonnegative tensors, stochastic games, hypergraphs}

\begin{abstract}
Let $f:\R^n_{> 0} \rightarrow \R^n_{>0}$ be an order-preserving and homogeneous function.  We show that the set of eigenvectors of $f$ in $\R^n_{>0}$ is nonempty and bounded in Hilbert's projective metric if and only if $f$ satisfies a condition involving upper and lower Collatz-Wielandt numbers of readily computed auxiliary functions. This condition generalizes a test for the existence of eigenvectors using hypergraphs that was proved by Akian, Gaubert, and Hochart.  We include several examples to show how the new condition can be combined with the hypergraph test to give a systematic approach to determine when homogeneous and order-preserving functions have eigenvectors in $\R^n_{>0}$.  We also observe that if the entries of $f$ are real analytic functions on $\R^n_{>0}$, then the set of eigenvectors of $f$ in $\R^n_{>0}$ is nonempty and bounded in Hilbert's projective metric if and only if the eigenvector is unique, up to scaling.  %Finally, we give new conditions for the normalized iterates of an order-preserving homogeneous function on $\R^n$ to converge to an eigenvector and we give complete necessary and sufficient conditions for existence and uniqueness of entrywise positive eigenvectors of functions that are order-preserving, homogeneous, multiplicatively convex, and real analytic.  
\end{abstract}

\maketitle
 
\section{Introduction} \label{sec:intro}

The classical Perron-Frobenius theorem guarantees that an irreducible nonnegative matrix always has a unique (up to scaling) entrywise positive eigenvector with eigenvalue equal to the spectral radius of the matrix.  There is a long history of nonlinear extensions of the Perron-Frobenius theorem to functions defined on a cone that are homogeneous (of degree one) and preserve the partial order induced by the cone. Functions defined on the positive orthant $\R^n_{> 0}$ are of particular interest in applications. Here we use $\R^n_{>0}$ to denote the set of vectors in $\R^n$ with strictly positive entries and $\R^n_{\ge 0}$ denotes the vectors with nonnegative entries. For $x, y \in \R^n$, we say that $x \le y$ when $y-x \in \R^n_{\ge 0}$. By \emph{order-preserving}, we mean that if $x,y$ are in the domain of $f$ and $x \ge y$, then $f(x) \ge f(y)$; and \emph{homogeneous} means that $f(t x) = t f(x)$ for all $t > 0$. 
%Although an order-preserving homogeneous function $f: \R^n_{\ge 0} \rightarrow \R^n_{\ge 0}$ always has an eigenvector with nonnegative entries, a more important and delicate question is whether or not $f$ has an eigenvector with all positive entries. Thus, we focus on order-preserving homogeneous functions $f: \R^n_{>0} \rightarrow \R^n_{>0}$ and define the \emph{eigenspace} of $f$ to be  
For an order-preserving homogeneous function $f:\R^n_{>0} \rightarrow \R^n_{>0}$, we define the \emph{eigenspace} of $f$ to be
$$E(f) := \{x \in \R^n_{>0} : x \text{ is an eigenvector of } f\}.$$
Note that $E(f)$ only includes eigenvectors with all positive entries, if they exist. Determining whether or not $E(f)$ is nonempty can be a delicate problem. 

General results for establishing the existence and uniqueness of eigenvectors of such functions have been proved by Morishima \cite{Morishima64}, Oshime \cite{Oshime83}, and Nussbaum \cite{Nussbaum86,Nussbaum88,Nussbaum89}. These results require strong assumptions on $f$ in order to guarantee both existence and uniqueness of the positive eigenvectors of $f$. More general sufficient conditions for the existence of eigenvectors in $\R^n_{>0}$ have been established by Schneider and Turner \cite{ScTu72}, Nussbaum \cite{Nussbaum88,Nussbaum89} and Gaubert and Gunawardena \cite{GaGu04}.  
%All of these conditions prove the existence of a positive eigenvector by either directly or indirectly showing that $f$ has an invariant subset in $\R^n_{>0}$ that is bounded in Hilbert's projective metric. 
All of these conditions actually imply that the eigenspace $E(f)$ is both nonempty and bounded in Hilbert's projective metric. 
\emph{Hilbert's projective metric on} $\R^n_{>0}$ is defined by
$$d_H(x,y) :=  \log \max_{i, j \in [n]} \left( \frac{y_i \, x_j}{x_i \, y_j } \right),$$
where $[n] := \{1, \ldots, n\}$. 
We will describe Hilbert's projective metric in more detail in the next section, for now we just note that any order-preserving homogeneous map $f$ on $\R^n_{>0}$ is \emph{nonexpansive} with respect to $d_H$, that is $d_H(f(x),f(y)) \le d_H(x,y)$ for all $x, y \in \R^n_{>0}$.  

One of the main results of this paper is a computable necessary and sufficient condition for the eigenspace $E(f)$ to be bounded and nonempty in Hilbert's projective metric. This improves on a non-deterministic necessary and sufficient condition \cite{LLN16} and also generalizes a hypergraph condition \cite{AkGaHo20} that is sufficient for the existence of positive eigenvectors.  Below we give a brief description of these prior results.  %Two recent papers \cite{AkGaHo20,LLN16} have studied the eigenvector existence problem by explicitly seeking necessary and sufficient conditions for certain invariant subsets to be bounded in $(\R^n_{>0},d_H)$. We give a brief summary of some of the main results of these papers here.  

In \cite{GaGu04}, the authors describe three special classes of invariant subsets of $\R^n_{>0}$.  For any $\alpha, \beta > 0$, the \emph{sub-eigenspace} corresponding to $\alpha$ is the set
$$S_\alpha(f) := \{x \in \R^n_{>0} : \alpha x \le f(x)\}$$
and the \emph{super-eigenspace} corresponding to $\beta$ is
$$S^\beta(f) := \{x \in \R^n_{>0} : f(x) \le \beta x\}.$$
%A third type of invariant set for $f$ is formed by intersecting a super and a sub-eigenspace.  Such sets are called \emph{slice spaces} and are denoted
The intersection $S_\alpha^\beta(f) := S_\alpha(f) \cap S^\beta(f)$ is called a \emph{slice space}.
%$$S^\beta_\alpha(f) = \{x \in \R^n_{>0} : \alpha x \le f(x) \le \beta x\}.$$
Proving that any one of these sets is nonempty and bounded in Hilbert's projective metric is sufficient to guarantee that $f$ has a positive eigenvector. One of the main results in \cite{GaGu04} is a necessary and sufficient condition for all super-eigenspaces of $f$ to be $d_H$-bounded.  A similar condition is also be given for sub-eigenspaces.  Later, \cite{AkGaHo20} gave a necessary and sufficient condition for all slice spaces for $f$ to be $d_H$-bounded.  All three of these conditions can be described in terms of two directed hypergraphs $\Hm$ and $\Hp$ that were introduced in \cite{AkGaHo15}.

Recall that a \emph{directed hypergraph} is a set of nodes $N$ together with a collection of directed hyperarcs.  A \emph{directed hyperarc} is an ordered pair $(\mathbf{t},\mathbf{h})$ where $\mathbf{t}, \mathbf{h} \subseteq N$. We refer to $\mathbf{t}$ as the \emph{tail} of the hyperarc and $\mathbf{h}$ as the \emph{head}. The two hypergraphs $\Hm$ and $\Hp$ both have nodes $N = [n]$, and a set of directed hyperarcs $(I,\{j\})$ where $I \subset [n]$ and $j \in [n]$.  The hyperarcs of $\Hm$ are the pairs $(I,\{j\})$ such that $j \notin I$ and 
$$\lim_{t \rightarrow \infty} f(\exp(-te_I))_j = 0.$$  
Here $\exp$ is the entrywise natural exponential function and $e_I \in \R^n$ is the vector with entries 
$$(e_I)_i := \begin{cases} 
1 & \text{ if } i \in I \\
0 & \text{ if } i \in I^c.
\end{cases}$$
As we will see in the next section, if $f:\R^n_{>0} \rightarrow \R^n_{>0}$ is order-preserving and homogeneous, then $f$ extends continuously to $\R^n_{\ge 0}$, so the condition defining the hyperarcs of $\Hm$ can be replaced by $f(e_{[n] \bs I})_j = 0$. We will use $[n] \bs I$ and $I^c$ interchangeably for subsets of $[n]$. The hyperarcs of $\Hp$ are $(I, \{j\})$ such that $j \notin I$ and
$$\lim_{t \rightarrow \infty} f(\exp(te_I))_j = \infty.$$  
We say that a subset $I \subseteq [n]$ is \emph{invariant} in a hypergraph if there are no hyperarcs from a subset of $I$ to a subset of $I^c$. Since $f$ is order-preserving, if there is a hyperarc in $\Hm$ (or $\Hp)$ from a subset of $I$ to $j \in I^c$, then $(I,\{j\})$ is also a hyperarc of $\Hm$ (or $\Hp$).  Therefore $I$ is invariant in one of the graphs $\Hm$ or $\Hp$ if and only if there are no hyperarcs $(I,\{j\})$ in that hypergraph.

Although not originally described in terms of hypergraphs, \cite[Theorem 5]{GaGu04} says the following.
\begin{theorem}[Gaubert-Gunawardena] \label{thm:GG}
Let $f: \R^n_{>0} \rightarrow \R^n_{>0}$ be order-preserving and homogeneous.  Then all super-eigenspaces $S^\beta(f)$ are bounded in $(\R^n_{>0},d_H)$ if and only if $\Hp$ has no nonempty invariant sets $J \subsetneq [n]$.   
\end{theorem}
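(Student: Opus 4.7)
My plan is to prove both directions constructively, using the monotonicity of $f$ together with the concrete structure of $\Hp$.

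For the ``only if'' direction, suppose $J \subsetneq [n]$ is a nonempty invariant subset of $\Hp$. Then for every $j \in J^c$, the value $c_j := \lim_{t \to \infty} f(\exp(te_J))_j$ is finite: the limit exists in $[0,\infty]$ by monotonicity of $f$ in $t$, and invariance of $J$ rules out $+\infty$. Setting $x(t) := \exp(te_J)$, monotonicity together with homogeneity gives $f(x(t))_i \le e^t f(e_{[n]})_i$ for every $i$ (since $x(t) \le e^t e_{[n]}$), while $f(x(t))_j \le c_j$ for $j \in J^c$. With $\beta := \max\bigl(\max_i f(e_{[n]})_i,\ \max_{j \in J^c} c_j\bigr)$ these bounds combine to give $f(x(t)) \le \beta x(t)$, so $x(t) \in S^\beta(f)$; and $d_H(x(t), e_{[n]}) = t \to \infty$ shows $S^\beta(f)$ is unbounded.

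For the ``if'' direction, I will argue contrapositively: given $x^{(k)} \in S^\beta(f)$ with $d_H(x^{(k)}, e_{[n]}) \to \infty$, I extract an invariant set. Since homogeneity of $f$ makes $S^\beta(f)$ scale-invariant, rescale each $x^{(k)}$ so that $\min_i x^{(k)}_i = 1$; then $\max_i x^{(k)}_i \to \infty$. After passing to a subsequence, each coordinate of $x^{(k)}$ either stays bounded or tends to $\infty$; let $J$ denote the diverging set. Then $J$ is nonempty (the maximum diverges) and proper (some index realizes the minimum value $1$). Writing $N_k := \min_{i \in J} x^{(k)}_i \to \infty$, the normalization forces $x^{(k)} \ge N_k e_J + e_{J^c} = \exp\bigl((\log N_k)\, e_J\bigr)$ entrywise. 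Monotonicity of $f$ and the super-eigenspace inequality then yield, for each $j \in J^c$,
\[
f\bigl(\exp((\log N_k) e_J)\bigr)_j \;\le\; f(x^{(k)})_j \;\le\; \beta\, x^{(k)}_j,
\]
and boundedness of $x^{(k)}_j$ for $j \in J^c$ together with monotonicity of $t \mapsto f(\exp(te_J))_j$ forces $\lim_{t\to\infty} f(\exp(te_J))_j < \infty$. Hence $(J, \{j\})$ is not a hyperarc of $\Hp$ for any $j \in J^c$, so $J$ is invariant.

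The main technical point I expect is the entrywise control used to extract $J$: rescaling to $\min_i x^{(k)}_i = 1$ is essential, since otherwise the $J^c$-entries could collapse to $0$ and invalidate the lower bound $x^{(k)} \ge \exp((\log N_k)\, e_J)$, severing the bridge from the functional inequality $f(x^{(k)}) \le \beta x^{(k)}$ to the limit condition defining hyperarcs of $\Hp$. Everything else reduces to a careful bookkeeping of monotonicity and homogeneity.
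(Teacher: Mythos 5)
Your proposal is correct, and it is worth noting that the paper itself contains no proof of Theorem \ref{thm:GG}: the statement is quoted from Gaubert and Gunawardena \cite[Theorem 5]{GaGu04}, merely rephrased in the hypergraph language of $\Hp$, so your argument is a self-contained proof rather than a variant of one given in the text. Both directions check out. For the ``only if'' direction, the witness curve $x(t)=\exp(te_J)$ works exactly as you say: $t\mapsto f(\exp(te_J))_j$ is nondecreasing, invariance of $J$ makes its limit $c_j$ finite for $j\in J^c$, the $J$-coordinates are controlled by $f(x(t))\le f(e^t e_{[n]})=e^t f(e_{[n]})$, and $d_H(x(t),e_{[n]})=t\to\infty$ inside the single super-eigenspace $S^\beta(f)$. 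For the ``if'' direction (argued contrapositively), the normalization $\min_i x^{(k)}_i=1$, passage to a subsequence in which every coordinate converges in $[1,\infty]$, and the entrywise lower bound $x^{(k)}\ge \exp((\log N_k)e_J)$ correctly transfer the inequality $f(x^{(k)})\le\beta x^{(k)}$, together with boundedness of the $J^c$-coordinates, into finiteness of $\lim_{t\to\infty}f(\exp(te_J))_j$ for every $j\in J^c$; your identification of the proper, nonempty diverging set $J$ is sound since there are only finitely many coordinates. The one step you use silently is that the absence of hyperarcs of the form $(J,\{j\})$ already yields invariance of $J$ (no hyperarcs from any \emph{subset} of $J$ either); this is precisely the order-preservation remark the paper records just before the theorem, so you are entitled to it, though a sentence acknowledging it would make the write-up airtight.
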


A corresponding condition involving the hypergraph $\Hm$ is equivalent to all sub-eigenspaces of $f$ being $d_H$-bounded.  An even more general result was proved as part of \cite[Theorem 1.2]{AkGaHo20}.
\begin{theorem}[Akian-Gaubert-Hochart] \label{thm:AGH}
Let $f: \R^n_{>0} \rightarrow \R^n_{>0}$ be order-preserving and homogeneous. 
%For all diagonal matrices $D \in M_n$ having positive diagonal entries, the map $D f$ has a nonempty and $d_H$-bounded set of eigenvalues 
All slice spaces $S_\alpha^\beta(f)$ are bounded in $(\R^n_{>0}, d_H)$ if and only if for every pair of nonempty disjoint sets $I, J \subset [n]$, either $I^c$ is not invariant in $\Hm$ or $J^c$ is not invariant in $\Hp$.
\end{theorem}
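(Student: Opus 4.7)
My plan is to prove each implication by contrapositive. First note the useful reformulations: $I^c$ is invariant in $\Hm$ iff $f(e_I)_i > 0$ for every $i \in I$, and $J^c$ is invariant in $\Hp$ iff $\sup_{t \ge 0} f(\exp(te_{J^c}))_j < \infty$ for every $j \in J$. Both follow from the continuous extension of $f$ to $\R^n_{\ge 0}$ and the entrywise monotonicity of $t \mapsto f(\exp(\pm t e_S))$.

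\textbf{Bounded slice spaces imply the hypergraph condition.} Suppose some $S_\alpha^\beta(f)$ contains a sequence $\{x^{(k)}\}$ with $d_H(x^{(k)}, e) \to \infty$. The crucial tool is that $S_\alpha^\beta(f)$ is invariant under positive scaling, so the same sequence may be renormalized twice. First, rescale so that $\max_i x^{(k)}_i = 1$ and pass to a subsequence on which each coordinate either stays $\ge c$ for a fixed $c > 0$ (call these indices $I$) or tends to $0$ (call these $J = I^c$); both are nonempty. For $j \in J$, the bound $f(x^{(k)})_j \le \beta x^{(k)}_j \to 0$ combined with $x^{(k)} \ge c e_I$ forces $f(e_I)_j = 0$; for $i \in I$, the chain $\alpha c \le \alpha x^{(k)}_i \le f(x^{(k)})_i \le f(\exp(-t_k e_J))_i$ for suitable $t_k \to \infty$ gives, by monotonicity and continuity, $f(e_I)_i \ge \alpha c > 0$. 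Hence $I^c$ is invariant in $\Hm$. Now rescale the same sequence so $\min_i x^{(k)}_i = 1$ and extract a further subsequence producing $I' = \{i : x^{(k)}_i \to \infty\}$ and $J' = (I')^c$; comparing the two normalizations yields $I \subseteq I'$. The bound $x^{(k)} \ge \exp(t_k e_{I'})$ together with $f(x^{(k)})_j \le \beta x^{(k)}_j$ bounded for $j \in J'$ shows $\sup_t f(\exp(te_{I'}))_j < \infty$, so $J'^c = I'$ is invariant in $\Hp$. The pair $(I, J')$ is disjoint and both nonempty, witnessing the failure of the condition.

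\textbf{Hypergraph condition implies bounded slice spaces.} Contrapositively, assume disjoint nonempty $I, J$ with $f(e_I)_i > 0$ on $I$ and $L := \max_{j \in J} \sup_t f(\exp(te_{J^c}))_j < \infty$. When $I \cup J = [n]$, the family $x^{(t)} := \exp(te_I)$ works: homogeneity gives $f(x^{(t)})_i = e^t f(e_I + e^{-t} e_J)_i \to e^t f(e_I)_i$, balancing $x^{(t)}_i = e^t$ for $i \in I$; and since $J^c = I$, the value $f(x^{(t)})_j$ is trapped in $[f(e)_j, L]$ while $x^{(t)}_j = 1$. Thus $x^{(t)} \in S_\alpha^\beta(f)$ for appropriate $\alpha, \beta$ and all large $t$, with $d_H(x^{(t)}, e) = t \to \infty$.

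The main obstacle is the middle set $K := [n] \bs (I \cup J)$: naively setting $x^{(t)}_k = 1$ can let $f(x^{(t)})_k$ grow like $e^t$, violating the upper bound. My plan is to set $x^{(t)}_k = e^t w_k$ for a positive vector $w \in \R^K_{>0}$ solving the auxiliary equation $f(e_I + w e_K)_k = \gamma w_k$ for all $k \in K$ and some $\gamma > 0$. Existence of such a $(w, \gamma)$ should follow from a Brouwer fixed-point argument applied to a normalization of the map $w \mapsto (f(e_I + w e_K)_k)_{k \in K}$, or by homogenizing with an extra coordinate to reduce to a $(|K|+1)$-dimensional order-preserving homogeneous map where Theorem~\ref{thm:GG} applies. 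With such a $w$, one verifies that $x^{(t)} = e^t(e_I + w e_K) + e_J$ lies in a fixed slice space $S_\alpha^\beta(f)$ for all large $t$. The principal technical hurdle is choosing $I$ carefully (e.g., maximal in the lattice of subsets of $J^c$ with $f(e_I)_i > 0$ on $I$) so that the reduced problem satisfies the hypotheses needed to produce the positive solution $w$.
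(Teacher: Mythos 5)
This theorem is not proved in the paper at all; it is quoted from \cite[Theorem 1.2]{AkGaHo20}, so there is no internal proof to compare against, and your attempt must be judged on its own. Your first part is essentially correct (note, though, that the headings of your two parts are swapped relative to the implications they actually establish: producing a bad pair from an unbounded slice space proves that the hypergraph condition implies boundedness, not the converse). The double normalization, the extraction of $I$ and $I'$ with $I \subseteq I'$, the bound $f(e_I)_i \ge \alpha c > 0$ for $i \in I$ via Theorem \ref{thm:BS} and monotonicity in $t$, and the bound $\sup_t f(\exp(te_{I'}))_j < \infty$ for $j \notin I'$ are all sound, and $(I, (I')^c)$ is a legitimate witness; the claim $f(e_I)_j = 0$ for $j \in J$ is true but not needed.

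The genuine gap is in the other direction when $K := [n] \bs (I \cup J) \neq \varnothing$, and it is not just a missing computation: the proposed construction cannot work. Take $n=3$, $f(x) = (x_1, \sqrt{x_1 x_3}, x_3)$, $I = \{1\}$, $J = \{3\}$, $K = \{2\}$. This is a bad pair ($f(e_I)_1 = 1 > 0$ and $f(\exp(te_{J^c}))_3 = 1$ for all $t$), and $S_1^1(f)$ is indeed unbounded, but only because of vectors such as $(s, \sqrt{s}, 1)$, whose middle coordinate grows at the intermediate rate $\sqrt{s}$. Your ansatz $x^{(t)} = e^t(e_I + we_K) + e_J$ gives $f(x^{(t)})_2 / x^{(t)}_2 = e^{-t/2}/w_2 \rightarrow 0$, so these points leave every slice space, and the auxiliary eigenproblem $f(e_I + we_K)_k = \gamma w_k$ has no solution with $\gamma > 0$ here because $f(e_I + we_K)_2 = \sqrt{1 \cdot 0} = 0$. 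A cleverer choice of $I$ cannot rescue the plan either: in this example no partition $[3] = I' \cup J'$ (disjoint, both nonempty) satisfies both $f(e_{I'})_i > 0$ for all $i \in I'$ and $\sup_t f(\exp(te_{I'}))_j < \infty$ for all $j \in J'$ (each of the six candidates fails one of the two conditions), even though a bad disjoint pair exists and slice spaces are unbounded. So the reduction to the case $I \cup J = [n]$, where your two-scale family does work, is impossible in general; this is exactly why the theorem is stated for disjoint pairs rather than partitions. Any complete proof of this direction must produce unbounded families whose coordinates blow up at several different exponential rates (in \cite{AkGaHo20} this comes out of a recession/game-theoretic argument), which a fixed weight vector $w$ multiplying a single scale $e^t$ cannot imitate.
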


Both of these theorems are sufficient to prove that there exists an eigenvector of $f$ in $\R^n_{>0}$.  In fact, they both guarantee that the eigenspace $E(f)$
is nonempty and bounded in Hilbert's projective metric.  The condition of Theorem \ref{thm:GG} is more restrictive but its conclusion is stronger since if all super-eigenspaces are $d_H$-bounded, then so are all slice spaces. Even the conclusion that all slice spaces are bounded in $(\R^n_{>0},d_H)$ is quite strong.  In \cite{AkGaHo20}, it is shown to be equivalent to several other conditions on the function $f$, including that for any $n$-by-$n$ diagonal matrix $D$ with positive diagonal entries, the map $D f$ has a nonempty and $d_H$-bounded set of eigenvectors in $\R^n_{>0}$. 

Another recent result in nonlinear Perron-Frobenius theory is \cite[Theorem 5.1]{LLN16} which directly addresses when the eigenspace $E(f)$ is nonempty and $d_H$-bounded:  

\begin{theorem}[Lemmens-Lins-Nussbaum] \label{thm:LLN}
Let $f: \R^n_{>0} \rightarrow \R^n_{>0}$ be order-preserving and homogeneous. The eigenspace $E(f)$ is nonempty and bounded in $(\R^n_{>0},d_H)$ if and only if for every nonempty proper subset $J \subset [n]$, there exists $x \in \R^n$ such that 
\begin{equation} \label{illum}
\max_{j \in J} \frac{f(x)_j}{x_j} < \min_{i \in J^c} \frac{f(x)_i}{x_i}.
\end{equation}  
\end{theorem}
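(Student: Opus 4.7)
The plan is to work throughout in log coordinates $\xi_i = \log x_i$, where $\tilde f(\xi) := \log f(e^\xi)$ is order-preserving and commutes with translations along $\mathbf 1 = (1,\ldots,1)$. Hilbert's projective metric becomes the range seminorm $\max_i(\xi_i - \eta_i) - \min_i(\xi_i - \eta_i)$, which descends to a proper metric on $V := \R^n/\R\mathbf 1$; hence $d_H$-bounded subsets of the normalized sphere $\mathbb H := \{x > 0 : \prod_i x_i = 1\}$ are compact. Eigenvectors of $f$ correspond to zeros of the projected residual map $\hat g : V \to V$ induced by $\xi \mapsto \tilde f(\xi) - \xi$, and condition \eqref{illum} for $J$ says that the image of $\hat g$ meets the open cone $C_J := \{v \in V : \max_{j \in J} v_j < \min_{i \in J^c} v_i\}$.

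For sufficiency, assume \eqref{illum} holds for every nonempty proper $J \subset [n]$. For existence of an eigenvector, I would apply Brouwer's fixed point theorem to the normalized iteration $\bar f(x) := f(x)/\prod_i f(x)_i^{1/n}$, restricted to a compact convex $\bar f$-invariant subset of $\mathbb H$. The witnesses $y_J$ produced by \eqref{illum} act as barriers preventing orbits from escaping along any face $\{x_j = 0 : j \in J\}$; stitching them together should deliver a $d_H$-bounded, convex, $\bar f$-invariant set. Boundedness of $E(f)$ I would prove by contradiction: an unbounded sequence $x^{(k)} \in E(f)$ has, after normalization and passage to a subsequence, a limit $\bar x$ on $\partial \R^n_{\ge 0}$ with $J := \{i : \bar x_i = 0\}$ nonempty and proper. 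The eigenvector equation $f(x^{(k)}) = \lambda^{(k)} x^{(k)}$ together with continuity of the extension of $f$ to $\R^n_{\ge 0}$ forces $f(\bar x)_j = 0$ for $j \in J$; comparing the $d_H$-nonexpansive image of a witness $y$ for \eqref{illum} at this $J$ against $x^{(k)}$ should then yield a contradiction, since at $y$ the $J$-ratios are strictly below the $J^c$-ratios, while at $x^{(k)}$ all ratios coincide.

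For necessity, fix a nonempty proper $J$ and consider the perturbation $f_t$ defined by $f_t(x)_j := e^t f(x)_j$ for $j \in J$ and $f_t(x)_i := f(x)_i$ for $i \in J^c$. Any positive eigenvector $y$ of $f_t$ with eigenvalue $\mu$ satisfies $f(y)_j/y_j = \mu e^{-t} < \mu = f(y)_i/y_i$ for $j \in J$ and $i \in J^c$, so $y$ directly witnesses \eqref{illum}. Hence it suffices to show $E(f_t) \neq \emptyset$ for some $t > 0$. Since $E(f)$ is nonempty and bounded, the plan is a perturbation argument: for $t > 0$ sufficiently small, build a compact convex subset of $\mathbb H$ containing $E(f)$ in its interior that is invariant under the normalized $\bar f_t$, and apply Brouwer.

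The main obstacle lies in this perturbation step. A direct degree argument would require the local Brouwer degree of $\hat g$ at the zero set $E(f)$ to be nonzero, which is not automatic for a general order-preserving homogeneous $f$ without regularity. The approach I would pursue instead is to avoid degree theory by exploiting $d_H$-nonexpansiveness: show that some enlarged $d_H$-neighborhood of $E(f)$ is preserved by $\bar f$ (using that distance to $E(f)$ is nonincreasing under iteration), obtaining a compact $\bar f$-invariant set; then argue by continuity that a slight enlargement remains $\bar f_t$-invariant for small $t > 0$. The delicate part is ensuring that orbits of $\bar f_t$ do not drift toward $\partial \R^n_{\ge 0}$ as $t$ grows, and this is where the interplay between the witnesses $y_J$, nonexpansiveness, and the boundedness of $E(f)$ must be controlled most carefully.
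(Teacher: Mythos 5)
The paper itself does not prove Theorem \ref{thm:LLN}: it is quoted from \cite{LLN16}, where it is deduced from a horofunction (``illumination'') characterization of nonexpansive maps with nonempty bounded fixed point sets. Your text is an outline in the same coordinates (variation norm on $\R^n/\R\mathbf{1}$), but the central steps are left open, and one of the steps you do sketch does not close. The biggest gap is the existence half of sufficiency: ``stitching the witnesses $y_J$ together should deliver a $d_H$-bounded, convex, $\bar f$-invariant set'' is precisely the hard content of the theorem. A single point at which the residual lies in the cone $C_J$ gives, by itself, no invariant region; converting the family of witnesses into an invariant set (or into boundedness of orbits, after which an asymptotic-center argument would give a fixed point) is where \cite{LLN16} does real work with horoballs of the variation norm, and no construction is offered in your proposal.

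Two further concrete problems. First, the boundedness-by-contradiction sketch does not yield a contradiction from the data you collect: if eigenvectors $x^{(k)}$ (all with eigenvalue $r(f)$) are max-normalized and converge to $\bar x$ with zero set $J$, you only learn $f(\bar x)=r(f)\bar x$, hence $r(f^{\supp(\bar x)}_0)=r(f)$, which is entirely compatible with \eqref{illum} holding for every subset; ``at $y$ the $J$-ratios are strictly below the $J^c$-ratios while at $x^{(k)}$ all ratios coincide'' is not contradictory. To finish you must also take, along the same subsequence, the min-normalized limit $\bar y\in(0,\infty]^n$ (Theorem \ref{thm:BS}); its set $I$ of finite coordinates is nonempty and disjoint from $\supp(\bar x)$ (otherwise $\max_i x^{(k)}_i/\min_i x^{(k)}_i$ stays bounded), and $\bar y$ is an eigenvector of $f^{I}_\infty$ with eigenvalue $r(f)$, so $\lambda(f^{I}_\infty)\le r(f)$. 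Then, with $J:=\supp(\bar x)$, Lemma \ref{lem:AsubB} gives $\lambda(f^{[n]\bs J}_\infty)\le\lambda(f^{I}_\infty)\le r(f)\le r(f^{J}_0)$, contradicting \eqref{illum} for $J$ via Lemma \ref{lem:super}. Second, in the necessity direction your reduction to ``$E(f_t)\neq\emptyset$ for some small $t>0$'' is correct, but the proposed resolution of the ``main obstacle'' is unsound: invariance of a compact set under a nonexpansive map is not preserved by small uniform perturbations (an isometry with a unique fixed point already defeats ``a slight enlargement remains $\bar f_t$-invariant''), and the reason you give for rejecting degree theory is mistaken --- no regularity is needed. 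If $T$ is the induced nonexpansive self-map of $(V,\|\cdot\|_{\text{var}})$ with $\operatorname{Fix}(T)$ nonempty and bounded, every zero $x_s$ of $x-sT(x)$ with $0\le s<1$ satisfies $\|x_s-x^*\|\le\|x^*\|$ for any $x^*\in\operatorname{Fix}(T)$, so the homotopy $x\mapsto x-sT(x)$ has all zeros in a fixed ball and $\deg(\id-T,B_R,0)=1$ for $R$ large; consequently $T+t\,e_J$ has a fixed point for all small $t>0$, which is exactly the witness you need. With these repairs the necessity half and the boundedness half go through, but the existence half of sufficiency remains an essential missing piece of your argument.
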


%\hl{Point out that one direction of the LLN theorem is proved using topological degree theory (the direction which says it is necessary to have an illuminating set in order to have nonempty and bounded $E(f)$).} 
Theorem \ref{thm:LLN} actually implies slightly more, since it shows that small perturbations of $f$ will also have a $d_H$-bounded set of eigenvectors, see \cite[Corollary 3.5]{LLN16}. %Note that the proof that \eqref{illum} is necessary for $E(f)$ to be nonnempty and bounded in Theorem \ref{thm:LLN} uses a topological degree theory argument.

In general the conditions of these three theorems are progressively more difficult to check.  In \cite{LLN16}, the authors suggest a non-deterministic algorithm to check the conditions of Theorem \ref{thm:LLN}.  The algorithm involves testing random points in $\R^n_{>0}$ until \eqref{illum} has been verified for every nonempty $J \subsetneq [n]$.  If the algorithm halts, then $f$ has a positive eigenvector. But this approach is less satisfactory than a straightforward combinatorial test, particularly when the function $f$ has symbolic parameters with unspecified values.   

In Section \ref{sec:exist} of this paper, we use Theorem \ref{thm:LLN} to prove a new necessary and sufficient condition for the eigenspace $E(f)$ of an order-preserving homogeneous function $f: \R^n_{>0} \rightarrow \R^n_{>0}$ to be nonempty and bounded in Hilbert's projective metric.  This new condition (Theorem \ref{thm:super}) is easier to check than the condition of Theorem \ref{thm:LLN}.  It also complements the hypergraph conditions of Theorems \ref{thm:GG} and \ref{thm:AGH}, allowing one to check the hypergraph conditions first, and then check the new condition only for subsets of $[n]$ where the hypergraph condition fails. Section \ref{sec:conv} shows how the results of Section \ref{sec:exist} can be improved for multiplicatively convex functions. We give simple necessary and sufficient conditions for the eigenspace $E(f)$ to be nonempty and bounded in $(\R^n_{>0}, d_H)$ when $f$ is multiplicatively convex. We also generalize a theorem of Hu and Qi \cite[Theorem 5]{HuQi16} which gives necessary and sufficient conditions for the existence of entrywise positive eigenvectors for strongly nonnegative tensors.

In Section \ref{sec:unique} we prove that if an order-preserving and homogeneous function $f: \R^n_{>0} \rightarrow \R^n_{>0}$ is also real analytic and the eigenspace $E(f)$ is nonempty and bounded in $(\R^n_{>0},d_H)$, then $f$ has a unique positive eigenvector up to scaling. This is precisely analogous to the situation for nonnegative matrices. By \emph{real analytic}, we mean that each entry of $f$ is a real analytic function on all of $\R^n_{>0}$.  To prove this result, we show that a real analytic nonexpansive map on a Banach space with the fixed point property cannot have a bounded set of fixed points with more than one element.
%We also prove the following general result for nonexpansive maps on a real Banach space. If $X$ is a real Banach space with the fixed point property and $f:X \rightarrow X$ is nonexpansive, real analytic, and has more than one fixed point, then the set of fixed points of $f$ is unbounded in $X$.
%Many examples of order-preserving homogeneous maps on $\R^n_{>0}$ in applications have real analytic entries, so we do not need any additional assumptions to prove uniqueness of positive eigenvectors once we know that $E(f)$ is $d_H$-bounded. 

When an order-preserving and homogeneous map $f: \R^n_{>0} \rightarrow \R^n_{>0}$ has a unique positive eigenvector $u \in \R^n_{>0}$ with $\|u\|=1$, it is sometimes important to know whether the normalized iterates $f^k(x)/\|f^k(x)\|$ converge to $u$ for all $x \in \R^n_{>0}$ as $k \rightarrow \infty$.  In Section \ref{sec:iterates} we give a new condition on the derivative of $f$ at $u$ that is sufficient to guarantee that the iterates converge.  We also point out that if $f$ has an eigenvector in $\R^n_{>0}$, then the normalized iterates of $f+\id$ always converge to an eigenvector of $f$, even if the normalized iterates of $f$ do not. 

We conclude in Section \ref{sec:examples} with detailed examples of how to use these new results in applications.  These include a population biology model and a Shapley operator associated with a stochastic game.  In Theorem \ref{thm:Mplus}, we give complete necessary and sufficient conditions for existence and uniqueness of entrywise positive eigenvectors for functions in the class $\mathcal{M}_+$ which includes the order-preserving homogeneous functions associated with the H-eigenvector problem for nonnegative tensors. We also translate the results of Section \ref{sec:exist} to the setting of topical functions, which are functions $T: \R^n \rightarrow \R^n$ that are order-preserving and additively homogeneous.

\section{Preliminaries}

%The set $\R^n_{\ge 0}$ is a closed, convex, pointed cone with nonempty interior in $\R^n$. By \emph{pointed cone}, we mean  

%For $x, y \in \R^n_{\ge 0}$, we say that $x$ is \emph{comparable} to $y$ if there are constants $\alpha, \beta > 0$ such that $\alpha x \le y \le \beta x$.  It is easy to see that comparability is an equivalence relation on $\R^n_{\ge 0}$ and therefore it partitions $\R^n_{\ge 0}$ into equivalence classes which we will call \emph{parts}.  One part of $\R^n_{\ge 0}$ is the interior $\R^n_{>0}$.  

A \emph{closed cone} in $\R^n$ is a closed convex set $K$ such that (i) $t K \subseteq K$ for all $t > 0$ and (ii) $K \cap (-K) = \{0\}$. A closed cone $K \subset \R^n$ induces a partial ordering $x \le_K y$ when $y-x \in K$. We write $x \ll_K y$ when $y-x$ is contained in the interior of $K$, denoted $\inter K$. The \emph{standard cone} in $\R^n$ is the closed cone $\R^n_{\ge 0}$ and we will write $x \le y$ for the partial ordering induced by the standard cone, and likewise, $x \ll y$ means that $x_i < y_i$ for all $i \in [n]$.

If $K$ is a closed cone and $x,y \in K$, then we say that $x$ is \emph{comparable} to $y$ and write $x \sim_K y$ if there exist constants $\alpha, \beta > 0$ such that $\alpha x \le_K y \le_K \beta x$.  It is easy to see that comparability is an equivalence relation on $K$. The equivalence classes under this relation are called the \emph{parts} of $K$. If $K$ has nonempty interior, then $\inter K$ is a part of $K$. For $x \in \R^n_{\ge 0}$, we define the \emph{support} of $x$ to be 
$$\supp(x) := \{j \in [n] : x_j > 0\}.$$  
The parts of $\R^n_{\ge 0}$ are the subsets $\R^J_{>0} = \{ x \in \R^n_{\ge 0} : \supp(x)=J \}$.    

Let $K \subset \R^n$ and suppose that $x, y \in K$ with $x \sim_K y$ and $y \ne 0$.  \emph{Hilbert's projective metric} is 
$$d_H(x,y) := \log \left( \frac{M(x/y)}{m(x/y)} \right) $$
where 
$$M(x/y) := \inf \{ \beta > 0 : x \le_K \beta y \}$$
and 
$$m(x/y) := \sup \{ \alpha > 0 : \alpha y \le_K x \}.$$
We adopt the convention that $d_H(0,0) = 0$ and $d_H(x,y) = \infty$ if $x$ and $y$ are not comparable. When $K = \R^n_{\ge 0}$ and $x, y\in \R^n_{>0}$, 
$$m(x/y) = \min_{i \in [n]} x_i/y_i,\hspace*{0.5cm}  M(x/y) = \max_{i \in [n]} x_i/y_i.$$
%and
Hilbert's projective metric has the following properties \cite[Proposition 2.1.1]{LemmensNussbaum}.  

\begin{proposition} \label{prop:dH}
Let $K$ be a closed cone in $\R^n$.  Then 
\begin{enumerate}
\item $d_H(x,y) \ge 0$ and $d_H(x,y) = d_H(y,x)$ for all $x, y \in K$.  
\item $d_H(x,z) \le d_H(x,y) + d_H(y,z)$ for any $x, y, z$ in a part of $K$.  
\item $d_H(\alpha x, \beta y) = d_H(x,y)$ for all $\alpha, \beta >0$ and $x, y \in K$.  
\end{enumerate} 
\end{proposition}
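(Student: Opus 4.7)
The plan is to verify each of the three claims directly from the definitions of $M(x/y)$ and $m(x/y)$, treating the conventions $d_H(0,0)=0$ and $d_H(x,y)=\infty$ for non-comparable pairs as trivial cases (both sides are either $0$ or $+\infty$). In the substantive case I may therefore assume $x, y$ (and for (2) also $z$) are nonzero and pairwise comparable, so that all relevant suprema and infima in the definitions of $m$ and $M$ are attained, strictly positive, and finite; attainment follows from closedness of $K$, while strict positivity of $m$ and finiteness of $M$ follow from comparability.

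For (3), the key observation is that for $\gamma, \alpha, \beta > 0$, the relation $\alpha x \le_K \gamma \beta y$ is equivalent to $x \le_K (\gamma \beta/\alpha)\, y$, since $K$ is a cone. Setting $\delta = \gamma \beta/\alpha$ and taking infima yields $M(\alpha x/\beta y) = (\alpha/\beta)\, M(x/y)$, and an analogous calculation gives $m(\alpha x/\beta y) = (\alpha/\beta)\, m(x/y)$. The factor $\alpha/\beta$ cancels in the ratio, establishing (3). The symmetry half of (1) follows similarly: the equivalences $\alpha x \le_K y \iff x \le_K (1/\alpha)\, y$ and $x \le_K \beta y \iff (1/\beta)\, x \le_K y$ show $m(y/x) = 1/M(x/y)$ and $M(y/x) = 1/m(x/y)$, so $M(x/y)/m(x/y) = M(y/x)/m(y/x)$. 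The nonnegativity half reduces to $m(x/y) \le M(x/y)$: if $\alpha y \le_K x \le_K \beta y$ with $y \ne 0$, then $(\beta - \alpha) y \in K$, and the cone axiom $K \cap (-K) = \{0\}$ forces $\beta \ge \alpha$.

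For the triangle inequality (2), the attained defining inequalities read $m(x/y)\, y \le_K x \le_K M(x/y)\, y$ and $m(y/z)\, z \le_K y \le_K M(y/z)\, z$. Multiplying the second chain by the positive scalars $m(x/y)$ on the low side and $M(x/y)$ on the high side, and concatenating with the first, yields
\[ m(x/y)\, m(y/z)\, z \;\le_K\; x \;\le_K\; M(x/y)\, M(y/z)\, z. \]
Hence $m(x/z) \ge m(x/y)\, m(y/z)$ and $M(x/z) \le M(x/y)\, M(y/z)$, and dividing and taking logarithms gives $d_H(x,z) \le d_H(x,y) + d_H(y,z)$.

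The only real bookkeeping point, rather than a mathematical obstacle, is justifying the attainment and positivity of the extremal values used in (2). Since $K$ is closed, the set $\{\beta > 0 : x \le_K \beta y\}$ is closed in $\R_{>0}$, and comparability guarantees it is nonempty and bounded below by $m(x/y) > 0$; analogous reasoning handles $\{\alpha > 0 : \alpha y \le_K x\}$. Beyond this, the proof is a purely formal manipulation of cone inequalities using the axioms $K + K \subseteq K$, $tK \subseteq K$, and $K \cap (-K) = \{0\}$, so I expect no deeper difficulty.
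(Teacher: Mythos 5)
Your proof is correct: the paper does not prove this proposition itself but quotes it from Lemmens--Nussbaum (Proposition 2.1.1), and your direct verification via the identities $m(y/x)=1/M(x/y)$, $M(y/x)=1/m(x/y)$, the scaling relations $M(\alpha x/\beta y)=(\alpha/\beta)M(x/y)$, $m(\alpha x/\beta y)=(\alpha/\beta)m(x/y)$, and the submultiplicativity $M(x/z)\le M(x/y)M(y/z)$, $m(x/z)\ge m(x/y)m(y/z)$ is exactly the standard argument. One minor remark: the attainment of the extremal values in (2), which you justify via closedness of $K$, is not actually needed, since working with arbitrary feasible $\alpha,\beta$ and passing to the supremum/infimum at the end gives the same inequalities.
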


The use of Hilbert's projective metric to prove results in linear Perron-Frobenius theory can be traced back independently to Birkhoff \cite{Birkhoff57} and Samelson \cite{Samelson57}.  Their methods also apply to nonlinear Perron-Frobenius theory because if $f: K \rightarrow K$ is order-preserving with respect to $\le_K$ and homogeneous, then $f$ is nonexpansive with respect to $d_H$ \cite[Proposition 2.1.3]{LemmensNussbaum}.

Let $K$ be a closed cone in $\R^n$.  
Let $f:K \rightarrow K$ be an order-preserving homogeneous function. The \emph{cone spectral radius} of $f$ is 
$$r_K(f) := \lim_{k \rightarrow \infty} \|f^k\|_K^{1/k}$$
where $\|f\|_K := \sup_{x \in K} \|f(x)\|/\|x\|$.  If $K$ has nonempty interior, then an equivalent formula for the cone spectral radius is 
\begin{equation} \label{rlim}
r_K(f) = \lim_{k \rightarrow \infty} \|f^k(x)\|^{1/k}
\end{equation}
which does not depend on the choice of $x \in \inter K$ \cite[Proposition 5.3.6]{LemmensNussbaum}.  Note that neither formula for $r_K(f)$ depends on which norm is used for $\R^n$.  

%The following well known result is due to Kre\u{\i}n and Rutman \cite[Theorem 9.1]{KrRu48}.  Their original result is actually more general since it applies to compact functions on cones in a Banach space and has a somewhat more general homogeneity assumption as well. 
%\begin{theorem}[Kre\u{\i}n-Rutman] \label{thm:KR}
Although it can be difficult to know whether or not an order-preserving homogeneous function has an eigenvector in the interior of $K$, the following well-known result guarantees the existence of an eigenvector in $K$ \cite[Corollary 5.4.2]{LemmensNussbaum}.  
\begin{theorem} \label{thm:KR}
Let $K$ be a closed cone in $\R^n$ and let $f:K \rightarrow K$ be order-preserving and homogeneous.  Then there exists a nonzero $x \in K$ such that $f(x) = r_K(f) x$.  
\end{theorem}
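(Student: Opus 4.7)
The plan is to reduce to a Brouwer fixed point problem on a compact convex cross-section of the cone $K$ and to handle the possibility that $f$ vanishes on parts of $K \setminus \{0\}$ via an approximation argument.

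Since $K$ is a pointed closed cone in $\R^n$, there exists a continuous linear functional $\ell: \R^n \to \R$ with $\ell(x) > 0$ for every $x \in K \setminus \{0\}$, and the cross-section $\Sigma := \{x \in K : \ell(x) = 1\}$ is nonempty, compact, and convex. If $f(x) \neq 0$ for all $x \in K \setminus \{0\}$, I would define $g(x) := f(x)/\ell(f(x))$ as a continuous self-map of $\Sigma$ and apply Brouwer's fixed point theorem to obtain $x^* \in \Sigma$ with $f(x^*) = \lambda x^*$ for $\lambda = \ell(f(x^*)) > 0$. To handle the general case, I would apply this argument to the perturbation $f_\epsilon(x) := f(x) + \epsilon \ell(x) u$ (for a fixed $u \in K \setminus \{0\}$), which is order-preserving, homogeneous, and non-vanishing on $K \setminus \{0\}$, producing eigenvectors $x_\epsilon \in \Sigma$ with eigenvalues $\lambda_\epsilon > 0$. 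Compactness of $\Sigma$ and boundedness of $\{\lambda_\epsilon\}_\epsilon$ (which follows from continuity of $f$ on $\Sigma$) then let me extract a subsequence $\epsilon_k \downarrow 0$ with $x_{\epsilon_k} \to x^* \in \Sigma$ and $\lambda_{\epsilon_k} \to \lambda^*$, yielding $f(x^*) = \lambda^* x^*$ with $x^* \neq 0$.

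The remaining task is to verify that $\lambda^* = r_K(f)$. One inequality comes cheaply: iterating the eigenrelation and using a cone-monotone norm gives $(\lambda^*)^k \|x^*\| = \|f^k(x^*)\|$, which combined with $r_K(f) = \lim_k \|f^k\|_K^{1/k}$ yields $\lambda^* \le r_K(f)$. The main obstacle is the reverse inequality $\lambda^* \ge r_K(f)$, since the Brouwer argument alone only produces \emph{some} eigenvalue, not necessarily the largest. To close this gap, I would invoke the Collatz--Wielandt variational characterization $r_K(f) = \sup\{\mu \ge 0 : \exists\, y \in K \setminus \{0\},\ f(y) \ge_K \mu y\}$ for order-preserving homogeneous maps, arrange the Brouwer fixed points of the perturbed maps $f_\epsilon$ to approximate this supremum for $f_\epsilon$, and use continuity of the cone spectral radius under the perturbation $f_\epsilon \to f$ (via monotonicity and uniform approximation on $\Sigma$) to transfer the equality $\lambda_\epsilon = r_K(f_\epsilon)$ to the limit. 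This eigenvalue-matching step is the essential use of the order-preserving and homogeneous structure beyond mere continuity.
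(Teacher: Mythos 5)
Your skeleton (compact cross-section $\Sigma$ cut out by a strictly positive functional, Brouwer applied to a normalized perturbed map, extraction of a limit eigenvector) is the standard route to this result, which the paper itself does not prove but quotes from Lemmens--Nussbaum. However, the step you yourself identify as the main obstacle --- showing the limiting eigenvalue is at least $r_K(f)$ --- is not actually closed, and as written the scheme can fail. Brouwer's theorem gives you no control over \emph{which} fixed point of the normalized map you obtain, and with an arbitrary $u \in K \setminus \{0\}$ the fixed points need not see the spectral radius at all. Concretely, take $K = \R^2_{\ge 0}$, $f(x_1,x_2) = (x_1, 2x_2)$, $\ell(x) = x_1 + x_2$ and $u = (1,0)$: then $f_\epsilon(x) = ((1+\epsilon)x_1 + \epsilon x_2,\, 2x_2)$, and $(1,0)$ is a fixed point of the normalized map for every $\epsilon$, with $\lambda_\epsilon = 1+\epsilon \rightarrow 1 < 2 = r_K(f)$. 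So ``arranging the Brouwer fixed points to approximate the Collatz--Wielandt supremum'' is not something the fixed point theorem does for you; nowhere do you prove $\lambda_\epsilon = r_K(f_\epsilon)$, and the appeal to continuity of $\epsilon \mapsto r_K(f_\epsilon)$ does not touch this issue. Moreover, the sup-form Collatz--Wielandt identity you invoke has, as its nontrivial direction, essentially the theorem being proved (an eigenvector with eigenvalue $r_K(f)$ is exactly a witness for the supremum), so leaning on it unproven risks circularity.

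The missing idea is to take $u$ in the \emph{relative interior} of $K$ (nonempty since $K$ is convex). Then the fixed-point identity $\lambda_\epsilon x_\epsilon = f(x_\epsilon) + \epsilon\,\ell(x_\epsilon)\,u \ge_K \epsilon\,\ell(x_\epsilon)\,u$ forces $x_\epsilon \in \ri K$, and an eigenvector in $\ri K$ automatically has eigenvalue equal to the cone spectral radius: there is a single $\beta>0$ with $y \le_K \beta x_\epsilon$ for all $y \in K$, $\|y\| \le 1$, hence $f_\epsilon^k(y) \le_K \beta \lambda_\epsilon^k x_\epsilon$ and $r_K(f_\epsilon) \le \lambda_\epsilon$; the reverse inequality is immediate, so $\lambda_\epsilon = r_K(f_\epsilon) \ge r_K(f)$, the last inequality because $f \le_K f_\epsilon$ pointwise (the general-cone analogue of Lemma \ref{lem:cworder}). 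Passing to a convergent subsequence then gives $f(x^*) = \lambda^* x^*$ with $\lambda^* \ge r_K(f)$, and your cheap upper bound $\lambda^* \le r_K(f)$ finishes the proof; no continuity of the cone spectral radius under perturbation is needed anywhere. Finally, note that Brouwer and the limit passage require continuity of $f$ on $K$, which the statement as recorded in the paper omits but which is part of the hypotheses of the cited result and holds in all of the paper's applications via the Burbanks--Sparrow extension; you should state it explicitly as an assumption.
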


An order-preserving homogeneous function $f: K \rightarrow K$ can have several eigenvectors and more than one eigenvalue. However, if $f$ has two eigenvectors in the same part of $K$, then those eigenvectors must have the same eigenvalue \cite[Corollary 5.2.2]{LemmensNussbaum}. Furthermore, $r_K(f)$ is the maximum of all of the eigenvalues of $f$ \cite[Proposition 5.3.6]{LemmensNussbaum}. Although Theorem \ref{thm:KR} does not guarantee that $f$ has an eigenvector in the interior of $K$, if $x \in \inter K$ is an eigenvector of $f$,  then the eigenvalue corresponding to $x$ must be $r_K(f)$.  

Suppose $K \subset \R^n$ is a closed cone with nonempty interior and $f: K \rightarrow K$ is order-preserving and homogeneous. The \emph{upper Collatz-Wielandt number} for $f$ is  
\begin{equation} \label{cw}
r(f) := \inf_{x \in \inter K} M(f(x)/x),
\end{equation}
and the \emph{lower Collatz-Wielandt number} for $f$ is
\begin{equation} \label{lcw}
\lambda(f) := \sup_{x \in \inter K} m(f(x)/x).
\end{equation} 

From the definition, it is clear that $\lambda(f) \le r(f)$ always. It is well known that the upper Collatz-Wielandt number $r(f)$ is always equal to the \emph{cone spectral radius} $r_K(f)$ \cite[Theorem 5.6.1]{LemmensNussbaum}.

If an order-preserving and homogeneous function $f:\R^n_{\ge 0} \rightarrow \R^n_{\ge 0}$ has a positive eigenvector $x \in \R^n_{>0}$ such that $f(x) = \mu x$, then the eigenvalue $\mu$ must equal both the upper and lower Collatz-Wielandt numbers, i.e., $r(f) = \lambda(f)=\mu$. In fact, we will prove a stronger result in Lemma \ref{lem:mindisp}. Another quick observation about Collatz-Wielandt numbers is the following.

\begin{lemma} \label{lem:cworder}
Let $f, g: \R^n_{\ge 0} \rightarrow \R^n_{\ge 0}$ be order-preserving and homogeneous. If $f(x) \le g(x)$ for all $x \in \R^n_{\ge 0}$, then $r(f) \le r(g)$ and $\lambda(f) \le \lambda(g)$.
\end{lemma}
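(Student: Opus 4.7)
The plan is to reduce the claim to a pointwise comparison of the inner quantities $M(f(x)/x)$ and $m(f(x)/x)$, and then take the infimum and supremum over $x \in \R^n_{>0}$ to obtain the desired inequalities on the Collatz-Wielandt numbers.

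First I would fix an arbitrary $x \in \R^n_{>0}$ and observe that the hypothesis $f(x) \le g(x)$ implies an inclusion of the sets used to define $M$ and $m$ relative to $x$. Concretely, if $\beta > 0$ satisfies $g(x) \le \beta x$, then $f(x) \le g(x) \le \beta x$ so $\beta$ also lies in the set defining $M(f(x)/x)$; this yields $M(f(x)/x) \le M(g(x)/x)$. Symmetrically, if $\alpha > 0$ satisfies $\alpha x \le f(x)$, then $\alpha x \le f(x) \le g(x)$ gives $m(f(x)/x) \le m(g(x)/x)$.

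With the two pointwise inequalities in hand, I would take the infimum of the first one over $x \in \R^n_{>0}$ to get $r(f) \le r(g)$ from the definition \eqref{cw}, and take the supremum of the second one over $x \in \R^n_{>0}$ to get $\lambda(f) \le \lambda(g)$ from \eqref{lcw}. No use of order-preservation, homogeneity, or continuity of $f$ and $g$ is actually needed in the argument itself; those hypotheses only serve to ensure that $r(\cdot)$ and $\lambda(\cdot)$ are well-defined quantities associated to the maps.

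There is no real obstacle here; the proof is essentially bookkeeping from the definitions. The only slight subtlety is being careful that the hypothesis $f(x) \le g(x)$ on all of $\R^n_{\ge 0}$ is being applied to points $x$ in the interior $\R^n_{>0}$, so $x$ is strictly positive and the ratios $f(x)_i/x_i$ and $g(x)_i/x_i$ are finite — making the $M$ and $m$ inequalities work componentwise as well.
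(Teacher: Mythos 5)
Your proof is correct and matches the paper's approach: the paper simply states that the lemma "follows immediately from the definitions \eqref{cw} and \eqref{lcw}," and your argument is exactly that verification, made explicit via the pointwise inequalities $M(f(x)/x) \le M(g(x)/x)$ and $m(f(x)/x) \le m(g(x)/x)$ followed by taking the infimum and supremum over $x \in \R^n_{>0}$.
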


\begin{proof}
This follows immediately from the definitions \eqref{cw} and \eqref{lcw}. 
\end{proof}

Every order-preserving homogeneous function $f: \R^n_{> 0} \rightarrow \R^n_{> 0}$ extends continuously to the closed cone $\R^n_{\ge 0}$. In fact, the following result \cite[Corollary 2]{BuSp00} (see also \cite[Corollary 4.6]{BuSpNu03} and \cite[Theorem 5.1.5]{LemmensNussbaum}) actually says a little more. Note that when working with the extended real line $\overline{\R} = [-\infty, \infty]$, we use the usual order topology induced by the natural linear ordering on $\overline{\R}$. 

\begin{theorem}[Burbanks-Sparrow] \label{thm:BS}
Let $f: \R^n_{>0} \rightarrow \R^n_{>0}$ be order-preserving and homogeneous. Then $f$ extends continuously to a map $\underline{f}:\R^n_{\ge 0} \rightarrow \R^n_{\ge 0}$.  The map $f$ also extends continuously to a map $\overline{f}:(0,\infty]^n \rightarrow (0,\infty]^n$. Both extensions are order-preserving and homogeneous.
\end{theorem}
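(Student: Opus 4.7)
The plan is to construct $\underline{f}$ and $\overline{f}$ as pointwise monotone limits of $f$ evaluated at truncated arguments. Write $e := e_{[n]} = (1,\ldots,1)$. For $x \in \R^n_{\ge 0}$, define
$$\underline{f}(x) := \lim_{\epsilon \downarrow 0} f(x + \epsilon e),$$
with the limit taken componentwise. Order-preservation of $f$ makes $\epsilon \mapsto f(x+\epsilon e)$ componentwise monotone nonincreasing as $\epsilon \downarrow 0$, and each component stays nonnegative, so the limit exists in $\R^n_{\ge 0}$. Agreement with $f$ on $\R^n_{>0}$ follows from continuity of $f$ there, itself a standard consequence of nonexpansiveness in the Thompson part metric for order-preserving homogeneous maps.

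Order-preservation and homogeneity of $\underline{f}$ pass directly through the monotone limit: $x \le y$ gives $f(x + \epsilon e) \le f(y + \epsilon e)$, and for $t > 0$, $\underline{f}(tx) = \lim_{\epsilon \downarrow 0} f\bigl(t(x + (\epsilon/t)e)\bigr) = t\,\underline{f}(x)$.

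The main step, and the expected obstacle, is continuity of $\underline{f}$ at an arbitrary boundary point. Given $x^{(k)} \to x$ in $\R^n_{\ge 0}$, I would set up a two-sided sandwich. The upper half is routine: for any $\epsilon > 0$, eventually $x^{(k)} \le x + \epsilon e$, so order-preservation gives $\underline{f}(x^{(k)}) \le f(x + \epsilon e)$; sending $k \to \infty$ then $\epsilon \downarrow 0$ yields $\limsup_k \underline{f}(x^{(k)}) \le \underline{f}(x)$. The lower half is more delicate, since one cannot additively translate $x^{(k)}$ below $x$ without leaving $\R^n_{\ge 0}$. The trick is to use a multiplicative rather than additive perturbation: for any $\delta \in (0,1)$, eventually $x^{(k)} \ge (1-\delta)x$ — on $\supp(x)$ because $x^{(k)}_i \to x_i > 0$, and on $[n]\bs\supp(x)$ because there $(1-\delta)x_i = 0 \le x^{(k)}_i$. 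Order-preservation and homogeneity of $\underline{f}$ then give $\underline{f}(x^{(k)}) \ge (1-\delta)\underline{f}(x)$; sending $k \to \infty$ and $\delta \downarrow 0$ completes the squeeze.

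For $\overline{f}$, the symmetric recipe is: for $x \in (0,\infty]^n$, set
$$\overline{f}(x) := \lim_{M \to \infty} f(x \wedge M e),$$
where $\wedge$ denotes componentwise minimum. Order-preservation makes this componentwise monotone nondecreasing in $M$, and once $M$ exceeds the smallest coordinate of $x$ the argument lies in $\R^n_{>0}$ with strictly positive $f$-image, so the limit lies in $(0,\infty]^n$. The verifications of order-preservation, homogeneity, and continuity (in the order topology on $(0,\infty]^n$) parallel the arguments for $\underline{f}$, with the multiplicative sandwich replaced by $x \wedge Me \le x^{(k)} \le (1+\delta)x$ on the finite coordinates of $x$ and $x^{(k)}_i \to \infty$ on its infinite coordinates.
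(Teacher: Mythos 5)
The paper does not prove this theorem at all: it is quoted as a known result, cited to Burbanks--Sparrow \cite{BuSp00} (see also \cite{BuSpNu03} and \cite{LemmensNussbaum}), so there is no internal proof to compare against. Your argument is a correct self-contained reconstruction of essentially the standard proof from that literature: define $\underline{f}(x)=\lim_{\epsilon\downarrow 0}f(x+\epsilon e)$ (a monotone limit), check order-preservation and homogeneity pass through the limit, and prove continuity at boundary points by the two-sided sandwich, where the key observation is that the lower bound must be multiplicative, $x^{(k)}\ge (1-\delta)x$ eventually, rather than additive; this is exactly the mechanism that makes the extension continuous on all of $\R^n_{\ge 0}$, and your dual construction $\overline{f}(x)=\lim_{M\to\infty}f(x\wedge Me)$ handles $(0,\infty]^n$ symmetrically (equivalently, one can conjugate by the reciprocal map $L$ as the paper does later, which reduces the second extension to the first). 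One small imprecision in your final sketch: the inequality $x\wedge Me\le x^{(k)}$ need not hold eventually, since on the finite coordinates $x^{(k)}_i$ may approach $x_i$ from below; the correct lower comparison is $(1-\delta)(x\wedge Me)\le x^{(k)}$ eventually (using $x^{(k)}_i\ge M$ on the infinite coordinates of $x$ and $x^{(k)}_i\ge(1-\delta)x_i$ on the finite ones), after which homogeneity gives $\overline{f}(x^{(k)})\ge(1-\delta)f(x\wedge Me)$ and the rest of your squeeze goes through; the upper bound $x^{(k)}\le(1+\delta)x$ is fine as stated. With that repair, the proof is complete and matches the cited source in spirit, so nothing further is needed.
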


We will write $f$ instead of $\overline{f}$ or $\underline{f}$ when working with these extensions, as long as the meaning is clear. Note that when we say that $\overline{f}$ is order-preserving and homogeneous, we are using the usual conventions that $c < \infty$ for all $c \in \R$ and $c \cdot \infty = \infty$ for all $c > 0$.   
We can make the extension of $f$ to $(0,\infty]^n$ somewhat more familiar by observing that $\overline{f} = L \underline{g} L$ where $g = L f L$ and $L:[0,\infty]^n\rightarrow [0,\infty]^n$ is the entrywise reciprocal function 
\begin{equation} \label{L}
L(x)_j := \begin{cases}
x_j^{-1} & \text{ if } 0 < x_j < \infty \\
\infty & \text{ if } x_j = 0 \\
0 & \text{ if } x_j = \infty.
\end{cases}
\end{equation}
For any order-preserving homogeneous function $f:(0,\infty]^n \rightarrow (0,\infty]^n$, we can define the upper and lower Collatz-Wielandt numbers $r(f)$ and $\lambda(f)$ using \eqref{cw} and \eqref{lcw}, and we allow the possibility that they might be infinite. We define the \emph{support} of $x \in (0,\infty]^n$ to be $\supp(x) = \{j \in [n] : x_j < \infty \}$. We can also define the \emph{parts} of $(0,\infty]^n$ to be the sets 
$$(0,\infty]^J := \{x \in (0,\infty]^n : \supp(x) = J \}.$$ 
Note that Hilbert's projective metric is defined on each part and is finite for any pair of vectors in the same part. 

Using the reciprocal function $L$, we see that $\lambda(f) = r(LfL)^{-1}$ and $r(f) = \lambda(LfL)^{-1}$. Because $f$ extends continuously to $(0,\infty]^n$, the hypergraph $\Hp$ 
has a hyperarc $(I,\{j\})$ with $j \notin I$ if and only if $f(\omega_I)_j = \infty$ where $\omega_I \in (0,\infty]^n$ is given by
$$(\omega_I)_i := \begin{cases}
\infty & \text{ if } i \in I \\
1 & \text{ if } i \in I^c.
\end{cases}$$
Since $\omega_I = L(e_{[n] \bs I})$, it follows that $\Hp = \mathcal{H}^-_0(LfL)$. 

The following iterative formula for the lower Collatz-Wielandt number is similar to \eqref{rlim}.

\begin{lemma} \label{lem:llim}
Let $f: \R^n_{\ge 0} \rightarrow \R^n_{\ge 0}$ (or $f: (0,\infty]^n \rightarrow (0,\infty]^n$) be order-preserving and homogeneous. For every $x \in \R^n_{>0}$,
$$\lambda(f) = \lim_{k \rightarrow \infty} \left(\min_{i \in [n]} f^k(x)_i\right)^{1/k}.$$
\end{lemma}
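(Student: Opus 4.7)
The plan is to reduce the claim to the analogous formula \eqref{rlim} for the upper Collatz-Wielandt number (equivalently, the cone spectral radius), applied to the reciprocal conjugate $g := LfL$, where $L$ is the entrywise reciprocal map from \eqref{L}. Immediately prior to the lemma, the paper observed that $\lambda(f) = r(LfL)^{-1}$, so it suffices to prove
$$r(g) \;=\; \lim_{k \rightarrow \infty} \bigl(\min_{i \in [n]} f^k(x)_i \bigr)^{-1/k}$$
for every $x \in \R^n_{>0}$ (with the conventions $1/0 = \infty$ and $1/\infty = 0$).

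Since $L$ is order-reversing, homogeneous of degree $-1$, and involutive on $\R^n_{>0}$, the composition $g = LfL$ is order-preserving and homogeneous. In both the $\R^n_{\ge 0}$ case and the $(0,\infty]^n$ case for $f$, the composition is well-defined on $\R^n_{>0}$ with range in $\R^n_{\ge 0}$, and Theorem \ref{thm:BS} supplies a continuous order-preserving homogeneous extension $\underline{g} : \R^n_{\ge 0} \to \R^n_{\ge 0}$ to which \eqref{rlim} applies. Fix $x \in \R^n_{>0}$ and set $y := L(x) \in \R^n_{>0}$. A trivial induction using $L \circ L = \id$ on $\R^n_{>0}$ gives $g^k(y) = L(f^k(x))$ for every $k \ge 1$. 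Equipping $\R^n$ with the sup norm and using that the entries of $f^k(x)$ are strictly positive,
$$\|g^k(y)\|_\infty \;=\; \max_{i \in [n]} \bigl(f^k(x)_i\bigr)^{-1} \;=\; \bigl(\min_{i \in [n]} f^k(x)_i \bigr)^{-1}.$$
Substituting this expression into \eqref{rlim} applied to $\underline{g}$ at the interior point $y$, and then taking reciprocals, yields the desired identity.

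The main, and only mild, obstacle is to check that \eqref{rlim} remains valid in the degenerate regime $r(\underline{g}) = \infty$ (equivalently $\lambda(f) = 0$). This is a standard extension of the order-bound argument: for $y \in \inter K$, every $z \in K$ with $\|z\| \le 1$ satisfies $z \le M(z/y)\, y$, so monotonicity and homogeneity imply $\|\underline{g}^k\|_K \le C(y) \|\underline{g}^k(y)\|$ with $C(y) := \sup_{\|z\|\le 1} M(z/y) < \infty$; taking $k$-th roots and letting $k \to \infty$ gives $r_K(\underline{g}) \le \lim_k \|\underline{g}^k(y)\|^{1/k}$, valid even when both sides are $+\infty$, while the reverse inequality is immediate from $\|\underline{g}^k(y)\| \le \|\underline{g}^k\|_K \|y\|$.
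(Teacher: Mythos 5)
Your core reduction is exactly the paper's: conjugate by the entrywise reciprocal $L$, use $\lambda(f)=r(LfL)^{-1}$, note $g^k(Lx)=L(f^k(x))$ so that $\|g^k(Lx)\|_\infty=\bigl(\min_{i\in[n]}f^k(x)_i\bigr)^{-1}$, and invoke \eqref{rlim} with the sup norm. That part is fine. The gap is in how you dispose of the degenerate situation, and you have located it in the wrong place. In the $\R^n_{\ge 0}$ case it can happen that $f(x)_i=0$ for some $i$ and some (equivalently, by comparability, every) $x\in\R^n_{>0}$. Then $g=LfL$ takes the value $+\infty$ in that coordinate at interior points, so your assertion that $g$ maps $\R^n_{>0}$ into $\R^n_{\ge 0}$ is false, Theorem \ref{thm:BS} does not furnish an extension $\underline{g}:\R^n_{\ge 0}\rightarrow\R^n_{\ge 0}$ (it requires a map of $\R^n_{>0}$ into $\R^n_{>0}$, and a map with infinite values is not into $\R^n_{\ge 0}$ at all), and \eqref{rlim} is not available for $g$. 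Your final paragraph does not repair this: the order-bound argument there presupposes a finite-valued self-map $\underline{g}$ of the closed cone, and for such a map the regime $r_K(\underline{g})=\infty$ you are guarding against simply cannot occur, since monotonicity gives $\|\underline{g}\|_K\le\|\underline{g}(e_{[n]})\|_\infty<\infty$ and hence $r_K(\underline{g})<\infty$. So the paragraph addresses a vacuous case while the genuine exceptional case remains untreated.

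The fix is easy and is what the paper does first: if $f(x)_i=0$ for some $i$ and some $x\in\R^n_{>0}$, then $\lambda(f)=0$ directly from the definition, and since $f(x)\le\beta x$ for some $\beta>0$ and $f$ is order-preserving, $f^k(y)_i=0$ for all $k\in\N$ and all $y\in\R^n_{>0}$, so both sides of the claimed identity are $0$; only after excluding this case is $g=LfL$ an interior-preserving map to which Theorem \ref{thm:BS} and \eqref{rlim} apply. A smaller remark: in the $(0,\infty]^n$ case no appeal to Theorem \ref{thm:BS} is needed (or strictly available if $g$ hits the boundary of the cone at interior points), because $f$ is already defined on all of $(0,\infty]^n=L(\R^n_{\ge 0})$, so $g$ is defined on all of $\R^n_{\ge 0}$ by composition and \eqref{rlim} applies to it directly. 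With these adjustments your argument coincides with the paper's proof.
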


\begin{proof}
If $f(x)_i = 0$ for some $i \in [n]$ and $x \in \R^n_{>0}$, then $\lambda(f) = 0$ by definition.  At the same time, $f^k(x)_i = 0$ for all $k \in \N$ since $f$ is order-preserving and there exists some $\beta > 0$ such that $f(x) \le \beta x$.  Also, $f^k(y)_i = 0$ for all $y \in \R^n_{>0}$ since $x$ and $y$ are comparable. Therefore $\lim_{k \rightarrow \infty} \left( \min_{i \in [n]} f^k(y)_i \right)^{1/k} = 0$ for all $y \in \R^n_{>0}$.  

Suppose now that $f(x)_i > 0$ for all $i \in [n]$. Let $g = L f L$. Observe that
\begin{align*}
r(g) = \inf_{x \in \R^n_{>0}} \max_{i \in [n]} \frac{g(x)_i}{x_i} &= \inf_{y = Lx \in \R^n_{>0}} \max_{i \in [n]}  \frac{y_i}{f(y)_i}  \\
&= \left(\sup_{y = Lx \in \R^n_{>0}} \min_{i \in [n]} \frac{f(y)_i}{y_i}\right)^{-1} = \lambda(f)^{-1}.
\end{align*}
For $x \in \R^n_{>0}$ and $k \in \N$, $\min_{i \in [n]} f^k(x)_i = (\max_{i \in [n]} g^k(y)_i)^{-1}$ , where $y = Lx$ and $k \in \N$. Working with the supremum norm $\|x\|_\infty = \max_{i \in [n]} |x_i|$, we have by \eqref{rlim}:
$$\lim_{k \rightarrow \infty} \left(\min_{i \in [n]} f^k(x)_i \right)^{1/k}= \lim_{k \rightarrow \infty} \left(\max_{i \in [n]} g^k(y)_i \right)^{-1/k}  = r(g)^{-1} = \lambda(f).$$
\end{proof} 

For an order-preserving homogeneous function $f:\R^n_{>0} \rightarrow \R^n_{>0}$, we define the \emph{minimum displacement} of $f$ to be $\delta(f) := \inf \{ d_H(x,f(x)) : x \in \R^n_{>0} \}$.  

\begin{lemma} \label{lem:mindisp}
Let $f:\R^n_{>0} \rightarrow \R^n_{>0}$ be order-preserving and homogeneous.  Then the minimum displacement of $f$ satisfies 
$$\delta(f) = \log r(f) - \log \lambda(f).$$ 
\end{lemma}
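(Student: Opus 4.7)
The lower bound $\delta(f) \ge \log r(f) - \log \lambda(f)$ is immediate from \eqref{cw} and \eqref{lcw}: for every $x \in \R^n_{>0}$, $M(f(x)/x) \ge r(f)$ and $m(f(x)/x) \le \lambda(f)$, so
\[
d_H(x, f(x)) \;=\; \log M(f(x)/x) - \log m(f(x)/x) \;\ge\; \log r(f) - \log \lambda(f),
\]
and taking the infimum over $x$ yields the bound (this also disposes of the cases $\lambda(f)=0$ or $r(f)=+\infty$). From now on I assume $0 < \lambda(f) \le r(f) < \infty$.

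For the reverse inequality my plan is to combine an asymptotic computation with a Kohlberg--Neyman-type identity. First, for any fixed $x \in \R^n_{>0}$ I would verify
\[
\lim_{k\to\infty} \frac{d_H(x,f^k(x))}{k} \;=\; \log r(f) - \log \lambda(f).
\]
Because the coordinates of $x$ are bounded positive constants, $M(f^k(x)/x)$ is comparable to $\|f^k(x)\|_\infty$ up to factors depending only on $x$, so \eqref{rlim} gives $(1/k)\log M(f^k(x)/x) \to \log r(f)$. Similarly $(1/k)\log m(f^k(x)/x) \to \log \lambda(f)$ by Lemma \ref{lem:llim}; subtract.

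Second, I would transfer to a Banach-space setting via the componentwise logarithm. Put $F(y) := \log f(\exp y)$. Then $F \colon \R^n \to \R^n$ is topical (order-preserving with $F(y+c\mathbf{1}) = F(y) + c\mathbf{1}$), and one has the identity $d_H(\exp y, \exp z) = \|y-z\|_H$ for the Hilbert seminorm $\|u\|_H := \max_i u_i - \min_i u_i$; the topical property gives that $F$ is nonexpansive for $\|\cdot\|_H$. On the quotient $\R^n/\R\mathbf{1}$ the seminorm becomes a polyhedral norm, and $F$ descends to a nonexpansive self-map $\tilde F$ of a finite-dimensional polyhedral normed space. Kohlberg's theorem (see Chapter 8 of Lemmens--Nussbaum) produces $v$ with $\tilde F^k(\tilde y)/k \to v$ and $\|v\|_H = \inf_{\tilde z}\|\tilde F(\tilde z)-\tilde z\|_H = \delta(f)$, whence
\[
\lim_{k\to\infty}\frac{\|\tilde F^k(\tilde y) - \tilde y\|_H}{k} \;=\; \|v\|_H \;=\; \delta(f).
\]
Combining with the first step gives $\delta(f) = \log r(f) - \log \lambda(f)$.

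The main obstacle is the second step: linking the averaged displacement to $\delta(f)$ requires Kohlberg's result on polyhedral finite-dimensional spaces (or an equivalent argument exploiting the specific structure of order-preserving homogeneous maps). The first step is just bookkeeping from the iterative formulas \eqref{rlim} and Lemma \ref{lem:llim}, and the lower bound is a one-line consequence of the definitions of the Collatz--Wielandt numbers.
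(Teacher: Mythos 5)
Your proposal is correct and is essentially the paper's argument: the first step (showing $\lim_{k\to\infty} d_H(x,f^k(x))/k = \log r(f) - \log\lambda(f)$ via \eqref{rlim} and Lemma \ref{lem:llim}) is exactly the computation in the paper, and the second step outsources the same nontrivial ingredient, namely that the escape rate of a nonexpansive map equals its minimal displacement -- the paper cites \cite[Theorem 1]{GaVi12} directly in $(\R^n_{>0},d_H)$, while you pass to log coordinates (the same isometry onto a finite-dimensional normed space that the paper uses in Section \ref{sec:unique}) and cite a Kohlberg-type theorem. One caveat on that citation: what you need is the Kohlberg--Neyman escape-rate identity $\lim_k \|\tilde{F}^k(\tilde y)-\tilde y\|_H/k = \inf_{\tilde z}\|\tilde{F}(\tilde z)-\tilde z\|_H$, which holds for nonexpansive maps of \emph{any} Banach space and requires neither polyhedrality nor convergence of $\tilde{F}^k(\tilde y)/k$; ``Kohlberg's theorem'' proper concerns invariant half-lines of piecewise affine nonexpansive maps, and the existence of the limit vector $v=\lim_k \tilde{F}^k(\tilde y)/k$ (a cycle-time-type statement) is a genuinely delicate question for general topical maps, so you should not route the argument through it when the weaker norm-limit identity already suffices.
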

\begin{proof}
By \eqref{rlim} and Lemma \ref{lem:llim}, the following is true for any $x \in \R^n_{>0}$.
\begin{align*}
\log r(f) - \log \lambda(f) &= \lim_{k \rightarrow \infty} \frac{1}{k} \log \left(\max_{i \in [n]} f^k(x)_i \right) - \lim_{k \rightarrow \infty} \frac{1}{k} \log \left(\min_{j \in [n]} f^k(x)_j\right) \\
&= \lim_{k \rightarrow \infty} \frac{1}{k} \log \left(\max_{i, j \in [n]} \frac{f^k(x)_i}{f^k(x)_j} \right) \\
&= \lim_{k \rightarrow \infty} \frac{1}{k} \log \left(\max_{i, j \in [n]} \frac{f^k(x)_i \, x_j}{f^k(x)_j \, x_i} \right) \\
&= \lim_{k \rightarrow \infty} \frac{1}{k} d_H(x,f^k(x)).
\end{align*}
It is known that $\lim_{k \rightarrow \infty} \frac{1}{k} d_H(x,f^k(x)) = \delta(f)$ for all $x \in \R^n_{>0}$ \cite[Theorem 1]{GaVi12}, so we conclude that $\log r(f) - \log \lambda(f) = \delta(f)$.  
\end{proof}

If $f$ has an eigenvector $x \in \R^n_{>0}$, then $d_H(x,f(x)) = 0$, so $\delta(f) = 0$. In that case Lemma \ref{lem:mindisp} proves that $r(f) = \lambda(f)$.  However, there are simple examples where $\delta(f) = 0$, but $f$ has no eigenvectors in $\R^n_{>0}$.

Another interesting corollary of Theorems \ref{thm:KR} and \ref{thm:BS} is the following. A similar slightly weaker observation was made in \cite[Theorem 2]{ChFrLi13}.  

\begin{corollary} \label{cor:formaleig}
Let $f: \R^n_{>0} \rightarrow \R^n_{>0}$ be order-preserving and homogeneous.  Then $f$ has an eigenvector $y \in (0,\infty]^n$ with $y_i < \infty$ for some $i \in [n]$ such that $f(y) = \lambda(f) y$. 
\end{corollary}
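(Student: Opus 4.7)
The plan is to reduce to Theorem \ref{thm:KR} applied not to $f$ but to the reciprocally conjugated map $g = LfL$, exploiting the identity $\overline{f} = L\underline{g}L$ recorded in the preliminaries. Under $L$, an eigenvector of $\underline g$ in $\R^n_{\ge 0}$ with eigenvalue $r(g)$ corresponds to an eigenvector of $\overline f$ in $(0,\infty]^n$ with eigenvalue $r(g)^{-1}$, and I would recognize this latter eigenvalue as $\lambda(f)$ via the identity $r(g) = \lambda(f)^{-1}$ derived in the proof of Lemma \ref{lem:llim}.

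First I would observe that $\lambda(f) > 0$: for any fixed $x \in \R^n_{>0}$ the image $f(x)$ lies in $\R^n_{>0}$, so $m(f(x)/x) > 0$ and hence $\lambda(f) \ge m(f(x)/x) > 0$. Consequently $r(g) = \lambda(f)^{-1}$ is finite. Next, Theorem \ref{thm:BS} guarantees that $g = LfL:\R^n_{>0} \to \R^n_{>0}$ extends continuously to an order-preserving homogeneous map $\underline g: \R^n_{\ge 0} \to \R^n_{\ge 0}$. Applying Theorem \ref{thm:KR} to $\underline g$ on the closed cone $K = \R^n_{\ge 0}$ yields a nonzero $x \in \R^n_{\ge 0}$ with $\underline g(x) = r_K(\underline g)\,x$, and because the upper Collatz--Wielandt number coincides with the cone spectral radius (as noted after equation \eqref{lcw}), we have $r_K(\underline g) = r(g) = \lambda(f)^{-1}$. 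Thus $\underline g(x) = \lambda(f)^{-1} x$.

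To produce the desired eigenvector, set $y := L(x) \in (0,\infty]^n$. Since $x \ne 0$, there is some index $i$ with $x_i > 0$, giving $y_i = 1/x_i < \infty$. Apply $L$ to the eigenvalue relation $\underline g(x) = \lambda(f)^{-1} x$; using $L(cx) = c^{-1}L(x)$ for $c > 0$ gives $L(\underline g(x)) = \lambda(f)\,y$. Since $L$ is an involution on $[0,\infty]^n$ and $\overline f = L\underline g L$, the left-hand side equals $\overline f(L(x)) = \overline f(y)$, so $f(y) = \lambda(f)\,y$, as required.

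There is no substantive obstacle beyond correctly setting up the reciprocal duality and tracking how $\lambda(f)$, $r(g)$, and the continuous extensions of $f$ and $g$ interact; once $g = LfL$ is introduced, the result is essentially a translation of Theorem \ref{thm:KR} through $L$. The only delicate point is confirming $\lambda(f) > 0$ so that $r(g)$ is finite, which is immediate from $f(\R^n_{>0}) \subseteq \R^n_{>0}$.
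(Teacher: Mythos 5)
Your proposal is correct and follows essentially the same route as the paper: conjugate by the reciprocal map $L$, extend $LfL$ continuously to $\R^n_{\ge 0}$ via Theorem \ref{thm:BS}, apply Theorem \ref{thm:KR} to obtain a nonzero eigenvector with eigenvalue $r(LfL)=\lambda(f)^{-1}$, and pull it back through $L$. The extra checks you include (that $\lambda(f)>0$ and that the upper Collatz--Wielandt number equals the cone spectral radius) are fine but only make explicit what the paper's proof uses implicitly.
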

\begin{proof}
Note that $LfL$ is order-preserving and homogeneous on $\R^n_{>0}$ and it has a continuous extension to $\R^n_{\ge 0}$ by Theorem \ref{thm:BS}. Furthermore, the continuous extension is order-preserving and homogeneous and $r(LfL) = \lambda(f)^{-1}$. Then, by Theorem \ref{thm:KR}, there is a nonzero $x \in \R^n_{\ge 0}$ such that $LfL(x) = \lambda(f)^{-1}x$.  So $y = Lx$ is an eigenvector of the continuous extension of $f$ to $(0,\infty]^n$ with eigenvalue $\lambda(f)$, and since $x \ne 0$, there is at least one entry of $y$ that is not infinite.
\end{proof}

\section{Existence of eigenvectors} \label{sec:exist}

\subsection{Boundary Collatz-Wielandt numbers and eigenvectors}
Before stating the main result of this section, we need to introduce a class of auxiliary functions defined for any order-preserving homogeneous $f: \R^n_{>0} \rightarrow \R^n_{>0}$.  For $\alpha \in [-\infty, \infty]$ and $J \subseteq [n]$, let $P^J_\alpha: [-\infty,\infty]^n \rightarrow [-\infty,\infty]^n$ be the projection 
\begin{equation} \label{PJ}
P^J_\alpha(x)_j := \begin{cases}
x_j & \text{ if } j \in J \\
\alpha & \text{ otherwise.}
\end{cases}
\end{equation}
Observe that for all $x \in [0,\infty]^n$, $P^J_0 x \le x \le P^J_\infty x$. For any order-preserving homogeneous function $f:\R^n_{>0} \rightarrow \R^n_{>0}$, we define 
$$f^J_0 := P^J_0 f P^J_0 \hspace*{0.5cm} \text{ and } \hspace*{0.5cm} f^J_{\infty} := P^J_\infty f P^J_\infty.$$
By Theorem \ref{thm:BS}, both $f^J_0: \R^n_{\ge 0} \rightarrow \R^n_{\ge 0}$ and $f^J_\infty: (0,\infty]^n \rightarrow (0,\infty]^n$ are well-defined order-preserving and homogeneous functions.  Note also that 
\begin{equation} \label{monotonicity}
f^J_0(x) \le f(x) \le f^J_\infty(x)
\end{equation}
for all $x \in \R^n_{>0}$.   
Now the main result of this section is:

\begin{theorem} \label{thm:super}
Let $f: \R^n_{>0} \rightarrow \R^n_{>0}$ be order-preserving and homogeneous.  The eigenspace $E(f)$ is nonempty and bounded in $(\R^n_{>0}, d_H)$ if and only if for all nonempty proper subsets $J \subset [n]$, 
\begin{equation} \label{superIllum}
r(f^J_0) < \lambda(f^{[n] \bs J}_\infty).
\end{equation}
\end{theorem}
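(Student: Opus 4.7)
I plan to derive Theorem \ref{thm:super} from Theorem \ref{thm:LLN} by showing, for each nonempty proper $J \subsetneq [n]$, that condition \eqref{superIllum} is equivalent to the existence of an interior witness $x \in \R^n_{>0}$ for \eqref{illum}. For the easy direction, start from such an $x$. The vector $f^J_0(x)$ has zero $J^c$-coordinates and $J$-coordinates equal to $f(P^J_0 x)_j$; using $P^J_0 x \le x$ and the order-preserving property of $f$,
\[
r(f^J_0) \;\le\; \max_{j \in J} \frac{f(P^J_0 x)_j}{x_j} \;\le\; \max_{j \in J} \frac{f(x)_j}{x_j}.
\]
Symmetrically, $f^{J^c}_\infty(x)$ has infinite $J$-coordinates and $J^c$-coordinates bounded below by $f(x)_i$, giving
\[
\lambda(f^{J^c}_\infty) \;\ge\; \min_{i \in J^c} \frac{f(P^{J^c}_\infty x)_i}{x_i} \;\ge\; \min_{i \in J^c} \frac{f(x)_i}{x_i}.
\]
Sandwiching the strict inequality \eqref{illum} between these two chains yields \eqref{superIllum}.

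\textbf{Sufficiency.} For the harder direction, fix $J$ and pick scalars $r(f^J_0) < \alpha < \beta < \lambda(f^{J^c}_\infty)$. The Collatz-Wielandt definitions supply two boundary witnesses: some $y \in \R^n_{\ge 0}$ with $\supp(y) = J$ satisfying $\max_{j \in J} f(y)_j/y_j < \alpha$, and some $z \in (0,\infty]^n$ with $z_j = \infty$ on $J$, $z_i \in (0,\infty)$ on $J^c$, and $\min_{i \in J^c} f(z)_i/z_i > \beta$. I fuse them into a one-parameter family in the interior by setting $x(t)_j := t y_j$ for $j \in J$ and $x(t)_i := z_i$ for $i \in J^c$, and then let $t \to \infty$. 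On the $J$-side, $x(t)/t \to y$ in $\R^n_{\ge 0}$, so continuity of $\underline{f}$ (Theorem \ref{thm:BS}) combined with homogeneity of $f$ yields $f(x(t))_j/(t y_j) = f(x(t)/t)_j/y_j \to f(y)_j/y_j < \alpha$ for every $j \in J$. On the $J^c$-side, $x(t) \to z$ in $(0,\infty]^n$, so continuity of $\overline{f}$ yields $f(x(t))_i/z_i \to f(z)_i/z_i > \beta$ for every $i \in J^c$, read as $\infty > \beta$ in the case $f(z)_i = \infty$. For $t$ sufficiently large, $x(t) \in \R^n_{>0}$ is the interior witness required by \eqref{illum}, and Theorem \ref{thm:LLN} closes the argument.

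\textbf{Main obstacle.} The crux is the fusion step above: Theorem \ref{thm:LLN} demands a single point in $\R^n_{>0}$, but the hypothesis only supplies witnesses living on two disjoint boundary strata, namely the face $\R^J_{>0}$ (where $J^c$-coordinates are zero) and the ``face at infinity'' where the $J$-coordinates diverge. Making the two witnesses cooperate requires invoking \emph{both} Burbanks--Sparrow extensions from Theorem \ref{thm:BS} along the same path $x(t)$, so that the one parameter $t\to\infty$ simultaneously drives the rescaled $J^c$-coordinates $z_i/t$ to zero (seen by $\underline{f}$ on $\R^n_{\ge 0}$) and the $J$-coordinates $t y_j$ to infinity (seen by $\overline{f}$ on $(0,\infty]^n$); once this is set up the two limit statements decouple cleanly.
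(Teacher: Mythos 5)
Your argument is correct and follows essentially the same route as the paper: the necessity direction is the same monotonicity sandwich via $f^J_0 \le f \le f^{[n]\bs J}_\infty$, and your sufficiency step is the paper's Lemma \ref{lem:near} in disguise, since your path $x(t)$ (with $J$-coordinates $ty_j$ and $J^c$-coordinates $z_i$, $t \to \infty$) is, after dividing by $t$ and reparametrizing, exactly the family $v + tw$ with $t \to 0$ used there, exploiting both Burbanks--Sparrow extensions before invoking Theorem \ref{thm:LLN}.
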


Before proving Theorem \ref{thm:super} we need the following lemmas.  

\begin{lemma} \label{lem:near}
Let $f:\R^n_{>0} \rightarrow \R^n_{>0}$ be order-preserving and homogeneous.  For any $v \in \R^J_{>0}$ and $w \in \R^{[n] \bs J}_{>0}$, let $x = v+tw$ where $t > 0$ is a small constant. Then 
$$\lim_{t \rightarrow 0} \frac{f(x)_j}{x_j} = \frac{f^J_0(v)_j}{v_j} \text{ and } \lim_{t \rightarrow 0} \frac{f(x)_i}{x_i} =\frac{f^{[n] \bs J}_\infty(w)_i}{w_i}$$
for all $j \in J$ and $i \in J^c$. 
\end{lemma}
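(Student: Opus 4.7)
The plan is to prove both limits by leveraging the two continuous extensions of $f$ provided by Theorem \ref{thm:BS}, coupled with the homogeneity of $f$. First I would unpack the definitions: since $v \in \R^J_{>0}$ has support exactly $J$ and $w \in \R^{[n] \bs J}_{>0}$ has support exactly $J^c$, the vector $x = v + tw$ satisfies $x_j = v_j > 0$ for $j \in J$ and $x_i = tw_i > 0$ for $i \in J^c$, so $x$ is in $\R^n_{>0}$ for every $t > 0$. Observe also that $v = \PJ{0} v$ and that for any $u \in \R^J_{>0}$, the definition $f^J_0 = \PJ{0} f \PJ{0}$ gives $(f^J_0(u))_j = \underline{f}(u)_j$ for all $j \in J$, and analogously $f^{[n] \bs J}_\infty = \PJ{\infty}^{[n] \bs J} f \PJ{\infty}^{[n] \bs J}$ agrees with $\overline{f}$ on the $J^c$-coordinates when evaluated at a point whose $J$-coordinates are $\infty$.

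For the first limit (the case $j \in J$), I would simply note that $f(x)_j/x_j = f(v+tw)_j/v_j$ since $x_j = v_j$ is independent of $t$. As $t \to 0^+$, we have $v + tw \to v$ in $\R^n_{\ge 0}$, so by the continuity of the extension $\underline{f}$ from Theorem \ref{thm:BS},
\[
\lim_{t \to 0^+} f(v + tw)_j = \underline{f}(v)_j = \underline{f}(\PJ{0} v)_j = (f^J_0(v))_j,
\]
and dividing by $v_j$ yields the desired identity.

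For the second limit (the case $i \in J^c$), the $x_i = tw_i$ tends to $0$, so direct continuity is insufficient and I would invoke homogeneity to rescale: $f(v + tw) = t \cdot f(t^{-1} v + w)$, hence $f(x)_i / x_i = f(t^{-1}v + w)_i/w_i$. As $t \to 0^+$, the vector $t^{-1}v + w$ has $J$-entries $v_j/t \to \infty$ and $J^c$-entries $w_i$, so it converges in $(0,\infty]^n$ to $\PJ{\infty}^{[n] \bs J}(w)$. By continuity of the extension $\overline{f}$ (again from Theorem \ref{thm:BS}), the limit is $\overline{f}\bigl(\PJ{\infty}^{[n] \bs J}(w)\bigr)_i$, which equals $(f^{[n] \bs J}_\infty(w))_i$ for $i \in J^c$ by the observation above. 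Dividing by $w_i$ gives the second claim.

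The main thing to keep straight is the bookkeeping: one must verify that the outer projection in the definition of $f^J_0$ (respectively $f^{[n] \bs J}_\infty$) does not affect the relevant coordinates, i.e., that it acts as the identity on the $J$-coordinates (respectively $J^c$-coordinates) that appear in the quotient. Beyond this, the only genuinely nontrivial ingredient is the homogeneity trick used to convert the degenerate limit $x_i \to 0$ into a limit at the $\infty$-boundary, which is precisely what the $\overline{f}$-extension in Theorem \ref{thm:BS} is designed to handle.
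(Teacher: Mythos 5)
Your proof is correct and follows essentially the same route as the paper: the first limit is the continuity of the extension $\underline{f}$ to $\R^n_{\ge 0}$ from Theorem \ref{thm:BS}, and the second is the same homogeneity rescaling $f(x)_i/x_i = f(t^{-1}x)_i/(t^{-1}x_i)$ combined with continuity of the extension $\overline{f}$ to $(0,\infty]^n$. The extra bookkeeping you include about the projections acting as the identity on the relevant coordinates is implicit in the paper's (very terse) argument but is the same computation.
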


\begin{proof}
The first limit follows immediately from Theorem \ref{thm:BS}.  The second follows from Theorem \ref{thm:BS} with the additional observation that
$$\lim_{t \rightarrow 0} \frac{f(x)_i}{x_i} = \lim_{t \rightarrow 0} \frac{f(t^{-1} x)_i}{t^{-1} x_i} = \frac{f^{[n] \bs J}_\infty(w)_i}{w_i}$$
for all $i \in J^c$. 
\end{proof}

\begin{lemma} \label{lem:super}
Let $f: \R^n_{>0} \rightarrow \R^n_{>0}$ be order-preserving and homogeneous. Let $J$ be a nonempty proper subset of $[n]$.  Then there exists $x \in \R^n_{>0}$ such that 
$$\max_{j \in J} \frac{f(x)_j}{x_j} < \min_{i \in J^c} \frac{f(x)_i}{x_i}$$
if and only if $r(f^J_0) < \lambda(f^{[n] \bs J}_\infty).$
\end{lemma}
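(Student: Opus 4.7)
The plan is to prove each implication separately. The forward direction ($\Rightarrow$) follows immediately from the monotonicity inequality \eqref{monotonicity} combined with the observations that $f^J_0(y)_i = 0$ for every $i \in J^c$ and $f^{J^c}_\infty(y)_i = \infty$ for every $i \in J$, so the max/min in the Collatz-Wielandt numbers of these auxiliary functions reduce to maxes/mins over $J$ or $J^c$. The reverse direction ($\Leftarrow$) uses the inf/sup definitions of $r$ and $\lambda$ to produce near-optimal test vectors $v$ and $w$ supported on $J$ and $J^c$, then perturbs them into a single strictly positive vector $x = v + tw$ and applies the boundary limit of Lemma \ref{lem:near} to pull the separation inequality out to the interior.

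For $(\Rightarrow)$, suppose $x^* \in \R^n_{>0}$ satisfies the separation, and choose $c$ strictly between $\max_{j \in J} f(x^*)_j/x^*_j$ and $\min_{i \in J^c} f(x^*)_i/x^*_i$. Since $f^J_0(x^*)_i = 0$ for $i \in J^c$, the max over $i \in [n]$ of $f^J_0(x^*)_i/x^*_i$ is achieved on $J$, and by \eqref{monotonicity} it is at most $\max_{j \in J} f(x^*)_j/x^*_j < c$; hence $r(f^J_0) \le \max_i f^J_0(x^*)_i/x^*_i < c$. By the symmetric argument, using $f^{J^c}_\infty(x^*)_i \ge f(x^*)_i$ for $i \in J^c$ and $f^{J^c}_\infty(x^*)_i = \infty$ for $i \in J$, one obtains $\lambda(f^{J^c}_\infty) \ge \min_i f^{J^c}_\infty(x^*)_i/x^*_i > c$, so $r(f^J_0) < \lambda(f^{J^c}_\infty)$.

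For $(\Leftarrow)$, fix $c$ with $r(f^J_0) < c < \lambda(f^{J^c}_\infty)$. Because $f^J_0 = P^J_0 f P^J_0$ depends only on the $J$-coordinates of its argument (and produces zeros off $J$), the infimum defining $r(f^J_0)$ reduces to an infimum over $\R^J_{>0}$; thus we can choose $v \in \R^J_{>0}$ with $\max_{j \in J} f^J_0(v)_j/v_j < c$. Analogously, choose $w \in \R^{J^c}_{>0}$ with $\min_{i \in J^c} f^{J^c}_\infty(w)_i/w_i > c$. Now set $x = v + tw$ with $t > 0$ small; by Lemma \ref{lem:near},
\[
\lim_{t \to 0} \frac{f(x)_j}{x_j} = \frac{f^J_0(v)_j}{v_j} < c \text{ for } j \in J, \qquad \lim_{t \to 0} \frac{f(x)_i}{x_i} = \frac{f^{J^c}_\infty(w)_i}{w_i} > c \text{ for } i \in J^c.
\]
Continuity then forces $\max_{j \in J} f(x)_j/x_j < c < \min_{i \in J^c} f(x)_i/x_i$ for all sufficiently small $t$, producing the desired $x$. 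The main obstacle is essentially bookkeeping: one has to keep careful track of which coordinates actually enter the max/min in each Collatz-Wielandt number and verify that the restricted inf/sup are attained (or nearly attained) by vectors supported on $J$ or $J^c$, so that Lemma \ref{lem:near} can be invoked verbatim.
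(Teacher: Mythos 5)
Your proposal is correct and follows essentially the same route as the paper: the forward direction chains the definitions of the Collatz--Wielandt numbers with the monotonicity inequality \eqref{monotonicity}, and the reverse direction picks near-optimal $v \in \R^J_{>0}$ and $w \in \R^{[n]\bs J}_{>0}$ and passes to the interior via Lemma \ref{lem:near} with $x = v + tw$. The only cosmetic difference is your use of an intermediate constant $c$ where the paper chains the inequalities directly.
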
 
\begin{proof}
($\Rightarrow$) Suppose that $\max_{j \in J} {f(x)_j}/{x_j} < \min_{i \in J^c} {f(x)_i}/{x_i}$ for some $x \in \R^n_{>0}$. By the definitions of the upper and lower Collatz-Wielandt numbers and \eqref{monotonicity}, 
\begin{align*}r(f^J_0) \le \max_{j \in J} \frac{f^J_0(x)_j}{x_j} &\le \max_{j \in J} \frac{f(x)_j}{x_j} \\
&< \min_{i \in J^c} \frac{f(x)_i}{x_i} \le \min_{i \in J^c} \frac{f^{[n] \bs J}_\infty(x)_i}{x_i} \le  \lambda(f^{[n] \bs J}_\infty).
\end{align*}

\noindent
($\Leftarrow$) Suppose that $r(f^J_0) < \lambda(f^{[n] \bs J}_\infty)$. 
By the definitions of the upper and lower Collatz-Wielandt numbers, we may choose $v \in \R^J_{>0}$ and $w \in \R^{[n] \bs J}_{>0}$ such that $\max_{j \in J} f^J_0(v)_j/v_j$ is close enough to $r(f^J_0)$ and $\min_{i \in J^c} f^{[n] \bs J}_\infty(w)_i/w_i$ is close enough to $\lambda(f^{[n] \bs J}_\infty)$ so that
$$\max_{j \in J} \frac{f^J_0(v)_j}{v_j}< \min_{i \in J^c} \frac{f^{[n] \bs J}_\infty(w)_i}{w_i}.$$
Then, by Lemma \ref{lem:near}, there exists $x \in \R^n_{>0}$ such that 
$$\max_{j \in J} \frac{f(x)_j}{x_j} < \min_{i \in J^c} \frac{f(x)_i}{x_i}.$$
\end{proof}

\begin{proof}[Proof of Theorem \ref{thm:super}]
Theorem \ref{thm:super} follows immediately from Theorem \ref{thm:LLN} and Lemma \ref{lem:super}.  
\end{proof}

The following result shows that it is not always necessary to check 
the conditions of Theorem \ref{thm:super}
for every $J \subset [n]$. 

\begin{theorem} \label{thm:quick}
Let $f: \R^n_{>0} \rightarrow \R^n_{>0}$ be order-preserving and homogeneous. Suppose that \eqref{superIllum} holds for some proper nonempty subset $J \subset [n]$.  If $f^J_0$ has a nonempty and $d_H$-bounded set of eigenvectors in $\R^J_{>0}$, then \eqref{superIllum} also holds for all nonempty $I \subseteq J$.  If $f^{[n] \bs J}_\infty$ has a nonempty and $d_H$-bounded set of eigenvectors in $\R^{[n] \bs J}_{>0}$, then \eqref{superIllum} holds for all $I \supseteq J$ with $I \ne [n]$.
\end{theorem}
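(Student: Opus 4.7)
The plan is to prove the first assertion by applying Theorem \ref{thm:super} to the restriction of $f^J_0$ to $\R^J_{>0}$, and to derive the second assertion from the first via the reciprocal duality $f \mapsto L f L$.

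Fix a nonempty $I \subseteq J$. If $I = J$, then \eqref{superIllum} for $I$ is precisely the standing hypothesis, so assume $I \subsetneq J$. Write $\tilde{f}_J$ for the map $f^J_0$ regarded as an order-preserving homogeneous map from $\R^J_{>0}$ into itself; the eigenvector hypothesis together with the order-preservation of $f$ ensures this restriction is well-defined and has a nonempty, $d_H$-bounded eigenspace in $\R^J_{>0}$. Applying Theorem \ref{thm:super} to $\tilde{f}_J$ yields
\[
r\bigl((\tilde{f}_J)^I_0\bigr) < \lambda\bigl((\tilde{f}_J)^{J \bs I}_\infty\bigr),
\]
with the projections now acting within $\R^J$. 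A routine unpacking of the definitions shows that $(\tilde{f}_J)^I_0$ coincides with $f^I_0$ on the face $\R^J_{\ge 0}$, so $r((\tilde{f}_J)^I_0) = r(f^I_0)$. Separately, $f^I_0 \le f^J_0$ pointwise (because $P^I_0 \le P^J_0$ and $f$ is order-preserving), so Lemma \ref{lem:cworder} combined with the standing hypothesis gives $r(f^I_0) \le r(f^J_0) < \lambda(f^{[n] \bs J}_\infty)$.

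The crux of the proof is the comparison
\[
\lambda\bigl(f^{[n] \bs I}_\infty\bigr) \;\ge\; \min\bigl(\lambda\bigl((\tilde{f}_J)^{J \bs I}_\infty\bigr),\, \lambda\bigl(f^{[n] \bs J}_\infty\bigr)\bigr).
\]
Granting it, the two upper bounds on $r(f^I_0)$ just established combine with this inequality to give $r(f^I_0) < \lambda(f^{[n] \bs I}_\infty)$, completing \eqref{superIllum} for $I$. To establish the comparison I would run a two-parameter version of the limiting argument in Lemma \ref{lem:near}. Given $\epsilon > 0$, pick witnesses $y^* \in \R^{J \bs I}_{>0}$ and $x^* \in \R^{[n] \bs J}_{>0}$ so that the corresponding minimum ratios are within $\epsilon$ of $\lambda((\tilde{f}_J)^{J \bs I}_\infty)$ and $\lambda(f^{[n] \bs J}_\infty)$, respectively. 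For $t > 0$, define $z^{(t)} \in (0,\infty]^n$ by
\[
z^{(t)}_k := \begin{cases} \infty & \text{if } k \in I, \\ y^*_k & \text{if } k \in J \bs I, \\ t\, x^*_k & \text{if } k \in [n] \bs J, \end{cases}
\]
so that $P^{[n] \bs I}_\infty z^{(t)} = z^{(t)}$. Using Theorem \ref{thm:BS} together with the homogeneity of $f$ to rescale the $[n] \bs J$-block by $t^{-1}$, as $t \to 0^+$ the ratios $f^{[n] \bs I}_\infty(z^{(t)})_k / z^{(t)}_k$ converge on $J \bs I$ to the $\tilde{f}_J$-ratios that define $\lambda((\tilde{f}_J)^{J \bs I}_\infty)$ at $y^*$, and on $[n] \bs J$ to the $f^{[n] \bs J}_\infty$-ratios that define $\lambda(f^{[n] \bs J}_\infty)$ at $x^*$. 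Taking the minimum over $[n] \bs I = (J \bs I) \cup ([n] \bs J)$ and letting $\epsilon \to 0$ yields the displayed comparison.

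For the second assertion, apply the first assertion to the dual map $g := L f L$. Using the identities $g^K_0 = L f^K_\infty L$ and $g^K_\infty = L f^K_0 L$ (consequences of $L P^K_\infty = P^K_0 L$), the Collatz-Wielandt numbers invert: $r(g^K_0) = \lambda(f^K_\infty)^{-1}$ and $\lambda(g^K_\infty) = r(f^K_0)^{-1}$. Hence the hypothesis \eqref{superIllum} for $f$ at $J$ is exactly \eqref{superIllum} for $g$ at $[n] \bs J$; the eigenvector hypothesis on $f^{[n] \bs J}_\infty$ corresponds, via $v \mapsto Lv$, to the eigenvector hypothesis on $g^{[n] \bs J}_0$; and the conclusion \eqref{superIllum} for $g$ at each nonempty $I' \subseteq [n] \bs J$ translates, through $I = [n] \bs I'$, to \eqref{superIllum} for $f$ at each $I \supseteq J$ with $I \ne [n]$.

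The main obstacle is the minimum comparison: its proof forces one to work with test points whose coordinates are simultaneously $\infty$ (those in $I$) and tending to $0$ (those in $[n] \bs J$ before the $t^{-1}$ rescaling), so neither extension of $f$ provided by Theorem \ref{thm:BS} suffices on its own; one must combine them through the homogeneity rescaling in a careful double limit in $t$ and $\epsilon$.
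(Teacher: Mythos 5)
Your argument is correct in substance, but it takes a genuinely different route from the paper. The paper never forms the ``glued'' comparison $\lambda(f^{[n]\bs I}_\infty) \ge \min\bigl(\lambda((\tilde f_J)^{J\bs I}_\infty),\lambda(f^{[n]\bs J}_\infty)\bigr)$; instead it perturbs $f^J_0$ by a diagonal map $D$ that multiplies the $J\bs I$ coordinates by $1-\epsilon$, invokes the stability of the conditions \eqref{illum} in Theorem \ref{thm:LLN} (they are open conditions) to get an eigenvector $v$ of $Df^J_0$, and reads off from $Df^J_0(v)=r(Df^J_0)v$ that the $I$-ratios of $f^J_0$ at $v$ are strictly below the $J\bs I$-ratios, which for small $\epsilon$ are still below $\lambda(f^{[n]\bs J}_\infty)$; Lemma \ref{lem:near} then produces an interior point witnessing \eqref{superIllum} for $I$. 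Your route instead applies Theorem \ref{thm:super} to the restriction $\tilde f_J$, uses Lemma \ref{lem:AsubB}, and proves the min-comparison directly with two-block test vectors; and you handle the second assertion by the reciprocal duality $g=LfL$ (using $LP^K_\infty=P^K_0L$, $r(g^K_0)=\lambda(f^K_\infty)^{-1}$, $\lambda(g^K_\infty)=r(f^K_0)^{-1}$), where the paper simply says the symmetric argument is ``essentially the same.'' Your version isolates a reusable inequality for lower Collatz--Wielandt numbers of nested boundary maps, at the price of the double-limit construction; the paper's version buys the same conclusion from the already-established robustness of Theorem \ref{thm:LLN} without any new limiting lemma.

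One step is misstated, though it is harmlessly so: on the block $J\bs I$ the ratios $f^{[n]\bs I}_\infty(z^{(t)})_k/z^{(t)}_k$ do \emph{not} in general converge to the $\tilde f_J$-ratios at $y^*$, because the two iterated limits (sending the $I$ coordinates to $\infty$ versus sending the $[n]\bs J$ coordinates to $0$) need not commute; an entry of the form $\sqrt{x_i x_m}$ with $i\in I$ and $m\in[n]\bs J$ gives $\infty$ in one order and $0$ in the other. What you actually need is only the one-sided bound, and that does hold for \emph{every} $t>0$ by order-preservation of the extensions: for finite $s$, $f(se_I+y^*)_k \le f(se_I+y^*+tx^*)_k \le f^{[n]\bs I}_\infty(z^{(t)})_k$, and letting $s\to\infty$ gives $(\tilde f_J)^{J\bs I}_\infty(y^*)_k \le f^{[n]\bs I}_\infty(z^{(t)})_k$. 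On the block $[n]\bs J$ your homogeneity rescaling is fine: $t^{-1}z^{(t)}$ increases in $(0,\infty]^n$ to $P^{[n]\bs J}_\infty x^*$ as $t\downarrow 0$, so those ratios increase to the $f^{[n]\bs J}_\infty$-ratios by Theorem \ref{thm:BS}. With the convergence claim on $J\bs I$ replaced by this monotonicity bound, your comparison inequality and hence the whole proof go through; you should also note (as the paper implicitly does) that the hypothesis that $f^J_0$ has an eigenvector in $\R^J_{>0}$ is what licenses treating $\tilde f_J$ as a self-map of $\R^J_{>0}$ before applying Theorem \ref{thm:super}.
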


\begin{proof}
Suppose that $f^J_0$ has a nonempty and $d_H$-bounded set of eigenvectors in $\R^J_{>0}$.  Consider any nonempty proper subset $I \subset J$. Let $D: \R^n \rightarrow \R^n$ be the linear transformation given by 
$$D(x)_j := \begin{cases}
x_j & \text{ if } j \in I \\
(1-\epsilon)x_j & \text{ if } j \in J \bs I \\
0 & \text{ otherwise}
\end{cases}$$
where $\epsilon>0$ is a small constant.
Note that $D f^J_0 (x) \le f^J_0 (x)$ for all $x \in \R^n_{\ge 0}$, so $r(D f^J_0) \le r(f^J_0)$ by Lemma \ref{lem:cworder}.
As long as $\epsilon$ is sufficiently small, Theorem \ref{thm:LLN} will imply that $D f^J_0$ has a nonempty and $d_H$-bounded set of eigenvectors in $\R^J_{>0}$ since all of the inequalities \eqref{illum} which hold for the map $f^J_0$ will also hold for $Df^J_0$. Let $v \in \R^J_{>0}$ be one such eigenvector. Then $D f^J_0(v) = r(D f^J_0) v$. This means that 
$$\frac{f^J_0(v)_j}{v_j} = \begin{cases}
r(Df^J_0) & \text{ if } j \in I \\
(1-\epsilon)^{-1} r(D f^J_0) & \text{ if } j \in J \bs I.\\
\end{cases}$$ 
If $\epsilon > 0$ is small enough, then by \eqref{superIllum},
$$(1-\epsilon)^{-1} r(Df^J_0) < \lambda(f^{[n] \bs J}_\infty).$$
By the definition of the lower Collatz-Wielandt number, we can choose $w \in \R^{[n] \bs J}_{>0}$ such that 
$$(1-\epsilon)^{-1} r(Df^J_0) < \min_{j \in J^c} \frac{f^{[n] \bs J}_\infty(w)_j}{w_j}.$$
Then
$$\max_{i \in I} \frac{f^J_0(v)_i}{v_i} = r(Df^J_0) < (1-\epsilon)^{-1} r(Df^J_0) = \min_{i \in J\bs I} \frac{f^J(v)_i}{v_i} < \min_{j \in J^c} \frac{f^{[n] \bs J}_\infty(w)_j}{w_j}.$$
By Lemma \ref{lem:near}, we can choose $x \in \R^n_{>0}$ such that
$$\max_{i \in I}\frac{f(x)_i}{x_i} < \min_{j \in I^c} \frac{f(x)_j}{x_j}.$$
From this and \eqref{monotonicity}, we conclude that
\begin{align*}r(f^I_0) \le \max_{j \in I} \frac{f^J_0(x)_j}{x_j} &\le \max_{j \in I} \frac{f(x)_j}{x_j} \\
&< \min_{i \in I^c} \frac{f(x)_i}{x_i} \le \min_{i \in I^c} \frac{f^{[n] \bs I}_\infty(x)_i}{x_i} \le  \lambda(f^{[n] \bs I}_\infty).
\end{align*}
The proof that \eqref{superIllum} holds for all $I \supseteq J$ when $f^{[n] \bs J}_\infty$ has a nonempty and $d_H$-bounded set of eigenvectors in $\R^{[n] \bs J}_{>0}$ is essentially the same.  
\end{proof}

In order to check the conditions of Theorem \ref{thm:super}, the following lemma can also be helpful.  

\begin{lemma} \label{lem:AsubB}
Let $f: \R^n_{>0} \rightarrow \R^n_{>0}$ be order-preserving and homogeneous. Suppose that $A \subseteq B \subseteq [n]$. Then 
\begin{enumerate}[(a)]
\item $r(f^A_0) \le r(f^B_0)$, \text{ and }
\item $\lambda(f^A_\infty) \ge \lambda(f^B_\infty)$.  
\end{enumerate} 
\end{lemma}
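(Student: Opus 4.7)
The plan is to reduce both inequalities to Lemma \ref{lem:cworder} (suitably extended) by first establishing pointwise monotonicity of the auxiliary functions $f^J_0$ and $f^J_\infty$ in the subset $J$. The key observation is that the projections $P^J_0$ and $P^J_\infty$ are themselves monotone in $J$: if $A \subseteq B$, then $P^A_0 x \le P^B_0 x$ (because $P^A_0$ zeros out more coordinates than $P^B_0$) and dually $P^A_\infty x \ge P^B_\infty x$ (because $P^A_\infty$ sends more coordinates to $\infty$) for every $x$ in the relevant domain.

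For (a), I would first combine this with the fact that $f$ is order-preserving (and extends continuously to $\R^n_{\ge 0}$ by Theorem \ref{thm:BS}): from $P^A_0 x \le P^B_0 x$ one gets $f(P^A_0 x) \le f(P^B_0 x)$, and then applying the chain of inequalities $P^A_0 y \le P^B_0 y$ to $y = f(P^B_0 x)$ yields
$$f^A_0(x) = P^A_0 f P^A_0(x) \le P^A_0 f P^B_0(x) \le P^B_0 f P^B_0(x) = f^B_0(x)$$
for all $x \in \R^n_{\ge 0}$. Lemma \ref{lem:cworder} then immediately gives $r(f^A_0) \le r(f^B_0)$.

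For (b), the reasoning is symmetric, working on $(0,\infty]^n$ and using the extension $\overline{f}$ from Theorem \ref{thm:BS}. From $P^A_\infty x \ge P^B_\infty x$ and the order-preserving property of $\overline{f}$, I obtain $\overline{f}(P^A_\infty x) \ge \overline{f}(P^B_\infty x)$, and applying $P^A_\infty y \ge P^B_\infty y$ to $y = \overline{f}(P^B_\infty x)$ gives $f^A_\infty(x) \ge f^B_\infty(x)$ pointwise. The desired inequality $\lambda(f^A_\infty) \ge \lambda(f^B_\infty)$ then follows from the definition \eqref{lcw} of the lower Collatz-Wielandt number applied to these functions on $(0,\infty]^n$; this is the analog of Lemma \ref{lem:cworder} for the extended-real setting, whose proof is identical (just take the supremum of $m(f(x)/x) \le m(g(x)/x)$).

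The only potential subtlety is that Lemma \ref{lem:cworder} is stated for functions on $\R^n_{\ge 0}$, whereas $f^J_\infty$ lives on $(0,\infty]^n$. I would either explicitly note that the one-line proof of Lemma \ref{lem:cworder} transfers verbatim, or convert via the reciprocal map $L$ of \eqref{L}, using the identity $\lambda(f) = r(LfL)^{-1}$ stated in the preliminaries to reduce (b) to (a). Neither route is an obstacle, just a matter of bookkeeping.
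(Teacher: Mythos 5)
Your proof is correct and follows essentially the same route as the paper: establish the pointwise inequalities $f^A_0 \le f^B_0$ on $\R^n_{\ge 0}$ and $f^A_\infty \ge f^B_\infty$ on $(0,\infty]^n$, then invoke the monotonicity of the Collatz--Wielandt numbers (Lemma \ref{lem:cworder} and its obvious analog). You merely spell out the projection-monotonicity details and the extended-real bookkeeping that the paper leaves implicit.
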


\begin{proof}
Note that $f^A_0(x) \le f^B_0(x)$ for all $x \in \R^n_{\ge 0}$.  Therefore $r(f^A_0) \le r(f^B_0)$ by Lemma \ref{lem:cworder}. 
%The lower Collatz-Wielandt numbers for $f^A_\infty$ and $f^B_\infty$ are defined using \eqref{lcw} so Lemma \ref{lem:cworder} also applies these functions. 
Since $f^A_\infty(x) \ge f^B_\infty(x)$ for all $x \in (0, \infty]^n$, a similar argument shows that $\lambda(f^A_\infty) \ge \lambda(f^B_\infty)$.  
%Alternatively use:
%$$\lambda(f^A_\infty) = r(L f^A_\infty L)^{-1} \ge r(L f^B_\infty L)^{-1} = \lambda(f^B_\infty).$$
%For all $i \in A$ and $x \in \R^B_{>0}$, $f_0^A(P^A x)_i \le f_0^B(x)_i$ by \eqref{monotonicity}. Therefore,
%\begin{align*}
%r(f_0^A) = \inf_{y \in \R^A_{>0}} \max_{i \in A} \frac{f_0^A(y)_i}{y_i} &= \inf_{x \in \R^B_{>0}} \max_{i \in A} \frac{f_0^A(P^A x)_i}{x_i} \\
%&\le \inf_{x \in \R^B_{>0}} \max_{i \in B} \frac{f_0^B(x)_i}{x_i}  = r(f_0^B).
%\end{align*} 
%Similarly, for all $i \in A$ and $x \in \R^B_{>0}$, $f_\infty^A(P^A x)_i \ge f_\infty^B(x)_i$. Therefore,
%\begin{align*}
%\lambda(f_\infty^A) = \sup_{y \in \R^A_{>0}} \min_{i \in A} \frac{f_\infty^A(y)_i}{y_i} &= \sup_{x \in \R^B_{>0}} \min_{i \in A} \frac{f_\infty^A(P^A x)_i}{x_i} \\
%&\ge \sup_{x \in \R^B_{>0}} \min_{i \in B} \frac{f_\infty^B(x)_i}{x_i}  = \lambda(f_\infty^B).
%\end{align*} 
\end{proof}

The following condition can show that an order-preserving homogeneous function has no eigenvectors in $\R^n_{>0}$.  

\begin{theorem} \label{thm:none}
Let $f: \R^n_{>0} \rightarrow \R^n_{>0}$ be order-preserving and homogeneous.  If 
$$r(f^J_0) > \lambda (f^{[n] \bs J}_\infty)$$
for some nonempty proper subset $J \subset [n]$, then $f$ has no eigenvectors in $\R^n_{>0}$. 
\end{theorem}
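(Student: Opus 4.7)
The plan is to argue by contradiction, exploiting the sandwich inequality \eqref{monotonicity} together with the fact that an eigenvector witnesses bounds on both the upper and lower Collatz-Wielandt numbers of the sandwiching maps. Suppose, for contradiction, that some $x \in \R^n_{>0}$ satisfies $f(x) = \mu x$ for a scalar $\mu > 0$. I will derive the chain $r(f^J_0) \le \mu \le \lambda(f^{[n] \bs J}_\infty)$, which directly contradicts the hypothesis $r(f^J_0) > \lambda(f^{[n]\bs J}_\infty)$.

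First I would apply \eqref{monotonicity} to get $f^J_0(x) \le f(x) = \mu x$. Since $x \in \R^n_{>0}$, this immediately yields $M(f^J_0(x)/x) \le \mu$, so by the definition \eqref{cw},
\begin{equation*}
r(f^J_0) \;=\; \inf_{y \in \R^n_{>0}} M(f^J_0(y)/y) \;\le\; M(f^J_0(x)/x) \;\le\; \mu.
\end{equation*}
Next, applying \eqref{monotonicity} with $J$ replaced by $[n] \bs J$, I get $f^{[n] \bs J}_\infty(x) \ge f(x) = \mu x$ (as elements of $(0,\infty]^n$, where the $j \in J$ coordinates are $\infty \ge \mu x_j$ trivially, and the remaining coordinates follow from the sandwich inequality). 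This gives $m(f^{[n] \bs J}_\infty(x)/x) \ge \mu$, and using the extended definition \eqref{lcw} for order-preserving homogeneous maps on $(0,\infty]^n$,
\begin{equation*}
\lambda(f^{[n] \bs J}_\infty) \;=\; \sup_{y \in \R^n_{>0}} m(f^{[n] \bs J}_\infty(y)/y) \;\ge\; m(f^{[n] \bs J}_\infty(x)/x) \;\ge\; \mu.
\end{equation*}
Combining these two inequalities yields $r(f^J_0) \le \mu \le \lambda(f^{[n]\bs J}_\infty)$, contradicting the hypothesis.

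There is no real obstacle here; the only subtlety is making sure that the Collatz-Wielandt numbers for the auxiliary maps $f^J_0$ and $f^{[n] \bs J}_\infty$ are interpreted correctly on their respective domains. This is handled by Theorem \ref{thm:BS}, which guarantees that $f^J_0$ extends continuously to $\R^n_{\ge 0}$ and $f^{[n]\bs J}_\infty$ extends continuously to $(0,\infty]^n$ as order-preserving homogeneous maps, so that both $r(f^J_0)$ and $\lambda(f^{[n]\bs J}_\infty)$ are well-defined (with the latter possibly infinite, in which case the hypothesis $r(f^J_0) > \lambda(f^{[n]\bs J}_\infty)$ would already fail and there is nothing to prove). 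The proof is essentially a one-line application of \eqref{monotonicity} in two directions.
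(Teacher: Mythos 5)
Your proof is correct, but it takes a genuinely different route from the paper. You argue by contradiction at a putative eigenvector: if $f(x)=\mu x$ with $x\in\R^n_{>0}$, then the sandwich \eqref{monotonicity} applied once with $J$ and once with $[n]\bs J$ gives $M(f^J_0(x)/x)\le \mu \le m(f^{[n]\bs J}_\infty(x)/x)$, hence $r(f^J_0)\le\mu\le\lambda(f^{[n]\bs J}_\infty)$ directly from the definitions \eqref{cw} and \eqref{lcw}; this is essentially the equality case of the forward implication in Lemma \ref{lem:super}, and it uses nothing beyond \eqref{monotonicity} and Theorem \ref{thm:BS}. The paper instead stays at the level of the global Collatz--Wielandt numbers of $f$: Lemma \ref{lem:AsubB} yields $r(f)\ge r(f^J_0)>\lambda(f^{[n]\bs J}_\infty)\ge\lambda(f)$, and then Lemma \ref{lem:mindisp} (which rests on the Gaubert--Vigeral result that $\tfrac1k d_H(x,f^k(x))\to\delta(f)$) shows the minimum displacement $\delta(f)=\log r(f)-\log\lambda(f)$ is strictly positive, which precludes eigenvectors. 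Your argument is more elementary and self-contained, ruling out exact eigenvectors with a two-line computation; the paper's argument leans on machinery already developed and in exchange proves something strictly stronger, namely $\delta(f)>0$, i.e.\ $\inf_{x\in\R^n_{>0}} d_H(x,f(x))>0$, so $f$ has no points of arbitrarily small displacement, not merely no fixed points of the associated projective map. Your handling of the extended-value subtleties (interpreting $\lambda(f^{[n]\bs J}_\infty)$ as a supremum over $\R^n_{>0}$, possibly infinite, in which case the hypothesis is vacuous) is consistent with how the paper uses these quantities in Lemma \ref{lem:super}, so there is no gap.
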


\begin{proof}
By Lemma \ref{lem:AsubB}, we have
$$r(f) \ge  r(f^J_0) > \lambda (f^{[n] \bs J}_\infty) \ge \lambda(f).$$
Then the minimum displacement $\delta(f) = r(f) - \lambda(f) > 0$ by Lemma \ref{lem:mindisp}. This means that $f$ cannot have an eigenvector in $\R^n_{>0}$.  
\end{proof}

Notice that there is a middle ground between the conditions of Theorems \ref{thm:super} and \ref{thm:none} where $r(f^J_0) \le \lambda(f^{[n] \bs J}_\infty)$ for all nonempty proper $J \subset [n]$, but $r(f^J_0) = \lambda(f^{[n] \bs J}_\infty)$ for at least one $J$.  In that case, it is possible that $f$ has no eigenvectors in $\R^n_{>0}$, or that the eigenspace $E(f)$ is unbounded in $(\R^n_{>0}, d_H)$. For example consider the linear maps corresponding to the nonnegative matrices 
$$\begin{bmatrix} 1 & 0 \\ 0 & 1 \end{bmatrix} \text{ and } \begin{bmatrix} 1 & 1 \\ 0 & 1 \end{bmatrix}.$$
The first is the identity matrix and every element of $\R^2_{>0}$ is an eigenvector, while the second matrix has no eigenvectors in $\R^2_{>0}$.  The interested reader can easily check that $r(f^{\{1\}}_0) = \lambda(f^{\{2\}}_\infty)=1$ and $r(f^{\{2\}}_0) \le \lambda(f^{\{1\}}_\infty)$ for each matrix.

\begin{remark} \label{rem:irred}
If the function $f$ in Theorem \ref{thm:super} is differentiable and the Jacobian matrix $f'(x)$ is irreducible for all $x \in \R^n_{>0}$, then any eigenvector  of $f$ in $\R^n_{>0}$ will be unique (up to scaling). This is also true, even if $f'(x)$ is not irreducible, as long as $f'(x)$ has a unique positive eigenvector for every $x \in \R^n_{>0}$ \cite[Corollary 6.4.8]{LemmensNussbaum}.  In that case, the condition of Theorem \ref{thm:super} is both necessary and sufficient for $f$ to have an eigenvector in $\R^n_{>0}$. 
\end{remark}

%%% NEXT PART IS DEFINITELY TRUE, BUT PROBABLY NOT WORTH INCLUDING.
%%%\color{blue}
%%%The following is an alternative formulation of Theorem \ref{thm:super}.  
%%%\begin{theorem} \label{thm:altmain}
%%%Let $f: \R^n_{>0} \rightarrow \R^n_{>0}$ be order-preserving and homogeneous.  The eigenspace $E(f)$ is nonempty and bounded in $(\R^n_{>0}, d_H)$ if and only if for all nonempty proper subsets $J \subset N$, either 
%%%\begin{equation} \label{altsuper}
%%%r(f^J_0) < \lambda(f) ~~\text{ or }~~ r(f) < \lambda(f^{N \bs J}_\infty).
%%%\end{equation}
%%%\end{theorem}
%%%
%%%\begin{proof}
%%%By Lemma \ref{lem:AsubB}, $r(f^J_0) \le r(f)$ and $\lambda(f) \le \lambda(f^{N \bs J}_\infty)$. So either inequality in \eqref{altsuper} would imply that $r(f^J_0) < \lambda(f^{N \bs S}_\infty)$. Therefore, if \eqref{altsuper} is true for all nonempty $J \subsetneq N$, then $E(f)$ is nonempty and bounded in $(\R^n_{>0}, d_H)$. Conversely, if $E(f)$ is nonempty and bounded in $(\R^n_{>0}, d_H)$, then $r(f^J_0) \le r(f) = \lambda(f) \le \lambda(f^{N \bs J}_\infty)$ and $r(f^J_0) < \lambda(f^{N \bs J}_\infty)$ for all nonempty $J \subsetneq N$. Together, these imply \eqref{altsuper} for all $J$.    
%%%\end{proof} 
%%%\color{black}

\subsection{Using hypergraphs to detect eigenvectors} \label{sec:hypergraph}

The results in the previous subsection can be combined with the hypergraph conditions from Theorems \ref{thm:GG} and \ref{thm:AGH} to give a very general method for checking whether an order-preserving homogeneous $f: \R^n_{>0} \rightarrow \R^n_{>0}$ has any eigenvectors in $\R^n_{>0}$.  The following two lemmas show how the upper and lower Collatz-Wielandt numbers of the auxiliary functions $f^J_0$ and $f^{[n] \bs J}_\infty$ relate to the hypergraphs $\Hm$ and $\Hp$.  For a hypergraph $\mathcal{H}$ and a subset $J$ of the nodes $N$, the \emph{reach} of $J$ in $\mathcal{H}$, denoted $\reach(J,\mathcal{H})$, is the smallest invariant subset of $N$ that contains $J$. Recall from Section \ref{sec:intro} that a set of nodes $I \subseteq [n]$ is invariant in one of the hypergraphs $\Hm$ or $\Hp$ if and only if there are no hyperarcs $(I, \{j\})$ leaving $I$. 

\begin{lemma} \label{lem:hyperorder}
Let $f: \R^n_{>0} \rightarrow \R^n_{>0}$ be order-preserving and homogeneous and let $J \subset [n]$.  If $\reach(J,\Hp) = I$, then $\lambda(f^{[n] \bs J}_\infty) = \lambda(f^{[n] \bs I}_\infty)$.  If $\reach(J^c, \Hm) = I^c$, then $r(f^J_0) = r(f^I_0)$.
\end{lemma}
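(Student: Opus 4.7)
The plan is to prove the first equality, $\lambda(f^{[n] \bs J}_\infty) = \lambda(f^{[n] \bs I}_\infty)$ when $I = \reach(J,\Hp)$, and then deduce the second by the reciprocal duality $g := LfL$. Since $\Hp(g) = \Hm(f)$, $g^K_\infty = L f^K_0 L$, and $\lambda(LhL) = r(h)^{-1}$, applying the first equality to $g$ with the pair $(J^c, I^c)$ (noting that $\reach(J^c, \Hp(g)) = \reach(J^c, \Hm(f)) = I^c$) gives $\lambda(g^J_\infty) = \lambda(g^I_\infty)$, i.e.\ $r(f^J_0) = r(f^I_0)$. So the substantive work is the first equality.

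One inequality of the first equality is immediate: since $[n]\bs I \subseteq [n]\bs J$, Lemma~\ref{lem:AsubB}(b) gives $\lambda(f^{[n]\bs I}_\infty) \ge \lambda(f^{[n]\bs J}_\infty)$. The reverse inequality is the heart of the lemma, and the idea is that iterating $f^{[n]\bs J}_\infty$ eventually forces every coordinate in $I$ to $\infty$, after which the dynamics on $I^c$ coincide exactly with those of $f^{[n]\bs I}_\infty$. To formalize this, construct the reach iteratively: $J_0 := J$ and $J_{k+1} := J_k \cup \{i \notin J_k : (J_k, \{i\}) \in \Hp\}$ (using monotonicity of hyperarcs in the tail), and pick $m \le n$ with $J_m = I$. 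The main technical step is the induction on $k \ge 1$:
\begin{equation*}
(f^{[n]\bs J}_\infty)^k(x)_i = \infty \text{ for all } i \in J_k \text{ and every } x \in \R^n_{>0}.
\end{equation*}
In the inductive step, let $z := (f^{[n]\bs J}_\infty)^k(x)$; by Theorem~\ref{thm:BS} every coordinate of $z$ lies in $(0,\infty]$, and by hypothesis $z_j = \infty$ for $j \in J_k$. For $i \in J_{k+1}\bs J$ choose a hyperarc $(K, \{i\}) \in \Hp$ with $K \subseteq J_k$ and $i \notin K$ (take $K = J_k$ when $i \in J_{k+1}\bs J_k$; otherwise take $K = J_{l-1}$ for the stage $l$ at which $i$ entered the reach). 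Picking $c > 0$ smaller than every finite coordinate of $z$ gives $z \ge c\,\omega_K$, whence $f(z)_i \ge c f(\omega_K)_i = c \cdot \infty = \infty$ by monotonicity and homogeneity. Since $i \in [n]\bs J$, this yields $(f^{[n]\bs J}_\infty)^{k+1}(x)_i = f(z)_i = \infty$; and for $i \in J$ the claim holds trivially via the outer $P^{[n]\bs J}_\infty$.

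Once the propagation claim is established, for $k \ge m$ the iterate $z := (f^{[n]\bs J}_\infty)^k(x)$ has $\infty$ on all of $I$, so $P^{[n]\bs J}_\infty z = P^{[n]\bs I}_\infty z = z$ and hence $(f^{[n]\bs J}_\infty)^{k+1}(x)\big|_{I^c} = f(z)\big|_{I^c} = f^{[n]\bs I}_\infty(z)\big|_{I^c}$. Iterating gives, for all $l \ge 0$,
\begin{equation*}
(f^{[n]\bs J}_\infty)^{m+l}(x)\big|_{I^c} = (f^{[n]\bs I}_\infty)^l(x')\big|_{I^c}
\end{equation*}
for any $x' \in \R^n_{>0}$ that agrees with $(f^{[n]\bs J}_\infty)^m(x)$ on $I^c$, because the $I^c$-output of $f^{[n]\bs I}_\infty$ depends only on the input's $I^c$-coordinates. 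Taking the minimum over $[n]$ (which on both sides equals the minimum over $I^c$, since the other coordinates are $\infty$) and applying Lemma~\ref{lem:llim} to each of $f^{[n]\bs J}_\infty$ and $f^{[n]\bs I}_\infty$, the $(m+l)$-th and $l$-th roots share a common limit, giving $\lambda(f^{[n]\bs J}_\infty) = \lambda(f^{[n]\bs I}_\infty)$.

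The principal obstacle is the propagation induction: one must simultaneously track which coordinates have been driven to $\infty$, verify that they remain at $\infty$ under further iteration (via the hyperarc that originally placed them in the reach), and force each newly reached coordinate to $\infty$, all while maintaining a uniform positive lower bound on $z$ so that $c f(\omega_K) = \infty$ is a legitimate conclusion from homogeneity. Once this is in place, the identification of the two Collatz-Wielandt limits is essentially a bookkeeping verification that the iterates of $f^{[n]\bs J}_\infty$ and $f^{[n]\bs I}_\infty$ eventually coincide on $I^c$.
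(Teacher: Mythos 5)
Your proposal is correct and is essentially the paper's own argument run on the dual side: the paper tracks the shrinking supports of the iterates of $f^J_0$ until they stabilize at $\reach(J^c,\Hm)^c$ and then invokes \eqref{rlim} (remarking that the $\lambda$ statement is proved the same way via Lemma \ref{lem:llim}), while you carry out exactly this bookkeeping for $f^{[n]\bs J}_\infty$ by propagating $\infty$ along $\reach(J,\Hp)$ and then recover the $r(f^J_0)$ statement through the reciprocal conjugation $g=LfL$, which the paper also uses elsewhere. The one point you leave implicit is that the iterates stay finite on $I^c$ (so that a point $x'\in\R^n_{>0}$ agreeing with $(f^{[n]\bs J}_\infty)^m(x)$ on $I^c$ exists); this follows in one line since $I$ is invariant in $\Hp$, hence $f(\omega_I)_i<\infty$ for $i\in I^c$ and the iterates are dominated off $I$ by a multiple of $f(\omega_I)$.
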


\begin{proof}
Fix any $x \in \R^n_{>0}$. Let $x^k = (f^J_0)^k(x)$ for $k \in \N$. Since there exists $\beta > 0$ such that $f^J_0(x) \le \beta x$ and $f$ is order-preserving, it follows that $x^{k+1} \le \beta x^k$ for all $k \in \N$. In particular, $\supp(x^{k+1}) \subseteq \supp(x^k)$ for each $k$. Let $I_k = \supp(x^k)$.  Note that $I_{k+1} = I_k$ if and only if $I^c_k$ is invariant in $\Hm$. So the sets $I_k$ eventually stabilize at some $k = m$ where $I_m = \reach(J^c,\Hm)^c$. Let $I = I_m$. We can choose $y \in \R^n_{>0}$ such that $P^I_0y = x^m$, and then $x^k = (f^I_0)^{k-m}(y)$ for all $k \ge m$.  Therefore $r(f^J_0) = r(f^I_0)$ by \eqref{rlim}.  

The proof that $\lambda(f^{[n] \bs J}_\infty) = \lambda(f^{[n] \bs I}_\infty)$ when $I = \reach(J,\Hp)$ is essentially the same, but uses Lemma \ref{lem:llim} in place of \eqref{rlim}.  
\end{proof}

%Lemma \ref{lem:hyperorder} lets us make the following observation.

\begin{lemma} \label{lem:connect}
Let $f: \R^n_{>0} \rightarrow \R^n_{>0}$ be order-preserving and homogeneous. Consider any $J \subset [n]$.  The following are equivalent. 
\begin{enumerate}[(a)]
\item \label{item:a} There is no nonempty $A \subseteq J$ such that $A^c$ is invariant in $\Hm$.
\item \label{item:b} $\reach(J^c,\Hm) = [n]$.
\item \label{item:c} $r(f^J_0) = 0$.
\end{enumerate}
Similarly, the following are also equivalent.
\begin{enumerate}[(a),resume]
\item \label{item:d} There is no nonempty $B \subseteq J^c$ such that $B^c$ is invariant in $\Hp$.
\item \label{item:e} $\reach(J,\Hp) = [n]$.
\item \label{item:f} $\lambda(f^{[n] \bs J}_\infty) = \infty$.
\end{enumerate}
\end{lemma}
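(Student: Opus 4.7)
The plan is to establish the first triple (a) $\Leftrightarrow$ (b) $\Leftrightarrow$ (c) directly, and then obtain (d) $\Leftrightarrow$ (e) $\Leftrightarrow$ (f) by applying the first triple to the conjugate map $g = LfL$, exploiting the identities $\Hp = \mathcal{H}^-_0(LfL)$, $r(LfL) = \lambda(f)^{-1}$, and $g^{[n] \bs J}_0 = L f^{[n] \bs J}_\infty L$.

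The equivalence (a) $\Leftrightarrow$ (b) is essentially by definition of reach: the invariant subsets of $\Hm$ containing $J^c$ are exactly the complements $A^c$ with $A \subseteq J$, and the smallest such, $\reach(J^c, \Hm)$, equals $[n] = \emptyset^c$ precisely when no nonempty $A \subseteq J$ produces an invariant $A^c$. For (b) $\Rightarrow$ (c), Lemma \ref{lem:hyperorder} applied with $I = \emptyset$ gives $r(f^J_0) = r(f^\emptyset_0)$, and $f^\emptyset_0 \equiv 0$ by the definition of $P^\emptyset_0$, so $r(f^J_0) = 0$.

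The substantive step is (c) $\Rightarrow$ (b), which I would prove contrapositively. Assume $\reach(J^c, \Hm) = I^c$ with $I \ne \emptyset$; by Lemma \ref{lem:hyperorder} it suffices to show $r(f^I_0) > 0$. Invariance of $I^c$ in $\Hm$ means that no hyperarc $(I^c, \{j\})$ exists for $j \in I$, which by the reformulation in the introduction is equivalent to $f(e_I)_j > 0$ for every $j \in I$, where $f$ is taken in the continuous extension guaranteed by Theorem \ref{thm:BS}. Since $f^I_0(x) = P^I_0 f(P^I_0 x)$ depends only on $x|_I$ and is zero on $I^c$, a direct unwrapping of the infimum shows $r(f^I_0) = r(\tilde g)$, where $\tilde g:\R^I_{>0} \to \R^I_{>0}$ is the order-preserving homogeneous restriction $y \mapsto (f(y,0))_{I}$; here the hypothesis $f(e_I)_j > 0$ on $I$ is exactly what keeps $\tilde g$ from sending $\R^I_{>0}$ to the boundary. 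Monotonicity and homogeneity then give $\tilde g(e_I) \ge c\, e_I$ for $c := \min_{j \in I} f(e_I)_j > 0$, so $\lambda(\tilde g) \ge c$ and hence $r(f^I_0) = r(\tilde g) \ge \lambda(\tilde g) > 0$.

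For the second triple, apply the already-proved equivalence (a) $\Leftrightarrow$ (b) $\Leftrightarrow$ (c) to $g = LfL$ and the subset $[n] \bs J$. Using $L P^I_\infty = P^I_0 L$ and $L P^I_0 = P^I_\infty L$, a short computation gives $g^{[n] \bs J}_0 = L\, f^{[n] \bs J}_\infty\, L$, hence $r(g^{[n] \bs J}_0) = \lambda(f^{[n] \bs J}_\infty)^{-1}$. The three conditions for $g$ and $[n] \bs J$ then translate verbatim into (d), (e), and (f). The main obstacle I anticipate is verifying cleanly in (c) $\Rightarrow$ (b) that the infimum over $\R^n_{>0}$ defining $r(f^I_0)$ reduces to the infimum of $\tilde g$ over $\R^I_{>0}$; once the $\Hm$-invariance is converted into positivity of $f$ on the face of $\R^n_{\ge 0}$ supported by $I$, everything else is bookkeeping via Lemma \ref{lem:hyperorder} and the reciprocal conjugation.
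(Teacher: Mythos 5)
Your proposal is correct and follows essentially the same route as the paper: (a)$\Leftrightarrow$(b) by the definition of reach, (b)$\Rightarrow$(c) via Lemma \ref{lem:hyperorder} with $I=\varnothing$, and the second triple by conjugating with $L$ using $\Hp=\mathcal{H}^-_0(LfL)$ and $g^{[n]\bs J}_0=Lf^{[n]\bs J}_\infty L$, exactly as in the paper. The only (harmless) variation is that you close the cycle by proving (c)$\Rightarrow$(b) contrapositively---reducing to $I=\reach(J^c,\Hm)^c$ via Lemma \ref{lem:hyperorder} and using $f(e_I)\ge c\,e_I$---whereas the paper proves (c)$\Rightarrow$(a) directly from the same iteration bound; your detour through $\tilde g$ and $\lambda(\tilde g)$ could be skipped by iterating $f^I_0(e_I)\ge c\,e_I$ and invoking \eqref{rlim} directly.
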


\begin{proof}
It is obvious from the definition of $\reach(J^c,\Hm)$ that conditions \ref{item:a} and \ref{item:b} are equivalent. 

\ref{item:b} $\Rightarrow$ \ref{item:c}.  If $\reach(J^c,\Hm) = [n]$, then $r(f^J_0) = r(f^\varnothing_0) = 0$.  

\ref{item:c} $\Rightarrow$ \ref{item:a}. Suppose that $r(f^J_0) = 0$. If $A \subseteq J$ and $f^J_0(e_A)_j > 0$ for all $j \in A$, then there exists $\alpha > 0$ such that $f^J_0(e_A) \ge \alpha e_A$.  But then $(f^J_0)^k(e_A) \ge \alpha^k e_A$ and so $r(f^J_0) \ge \alpha$ by \eqref{rlim}. This is a contradiction, so we conclude that $f^J_0(e_A)_j = 0$ for some $j \in A$. Therefore there is a hyperarc from $A^c$ into $A$ in $\Hm$, so $A^c$ is not invariant in $\Hm$.   

To prove that conditions \ref{item:d}, \ref{item:e}, and \ref{item:f} are equivalent, let $g = L f L$, where $L$ is the entrywise reciprocal function from \eqref{L}. Observe that $\Hp = \mathcal{H}^-_0(g)$ and $f^{[n] \bs J}_\infty = L g^{[n] \bs J}_0 L$, so $\lambda(f^{[n] \bs J}_\infty) = \infty$ if and only if $r(g^{[n] \bs J}_0) = 0$. Then \ref{item:d}, \ref{item:e}, \ref{item:f} are equivalent to each other because \ref{item:a}, \ref{item:b}, \ref{item:c} are equivalent.
\end{proof}

The following corollary is a restatement of Theorem \ref{thm:AGH} using Lemma \ref{lem:connect}.

\begin{corollary} \label{cor:AGH2}
Let $f: \R^n_{>0} \rightarrow \R^n_{>0}$ be order-preserving and homogeneous. Then the following are equivalent. 
\begin{enumerate}
\item All slice spaces $S_\alpha^\beta(f)$ are $d_H$-bounded.
\item For every $J \subset [n]$, either $r(f^J_0) = 0$ or $\lambda(f^{[n] \bs J}_\infty) = \infty$. 
\item For every $J \subset [n]$, either $\reach(J^c,\Hm) = [n]$ or $\reach(J,\Hp) = [n]$.
\end{enumerate}
\end{corollary}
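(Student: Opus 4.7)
The plan is to deduce the corollary as a combinatorial restatement of Theorem~\ref{thm:AGH}, using Lemma~\ref{lem:connect} as the translation device. I would organize the proof into the two equivalences (2)$\Leftrightarrow$(3) and (1)$\Leftrightarrow$(3).

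The equivalence (2)$\Leftrightarrow$(3) is immediate from Lemma~\ref{lem:connect}: applied at each $J \subset [n]$, it asserts $r(f^J_0) = 0$ if and only if $\reach(J^c,\Hm) = [n]$, and $\lambda(f^{[n] \bs J}_\infty) = \infty$ if and only if $\reach(J,\Hp) = [n]$, so the disjunctions in (2) and (3) match term by term.

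For (1)$\Leftrightarrow$(3), I would argue via the contrapositives using Theorem~\ref{thm:AGH}. First, suppose (3) fails at some $K \subset [n]$, so that both $\reach(K^c,\Hm) \ne [n]$ and $\reach(K,\Hp) \ne [n]$. Lemma~\ref{lem:connect} then yields a nonempty $A \subseteq K$ with $A^c$ invariant in $\Hm$, and a nonempty $B \subseteq K^c$ with $B^c$ invariant in $\Hp$. These sets $A,B$ are automatically disjoint, so Theorem~\ref{thm:AGH} shows that the slice spaces are not all $d_H$-bounded, and (1) fails. Conversely, given nonempty disjoint $I, J \subset [n]$ with $I^c$ invariant in $\Hm$ and $J^c$ invariant in $\Hp$, I would test condition (3) at $K = I$: the set $I \subseteq K$ itself is the witness needed by Lemma~\ref{lem:connect} to conclude $\reach(K^c,\Hm) \ne [n]$, and since $I \cap J = \varnothing$ forces $J \subseteq K^c$, the set $J$ is the witness giving $\reach(K,\Hp) \ne [n]$; hence (3) fails at $K$.

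The whole argument is essentially bookkeeping, and the only point requiring any care is matching the ``for every pair of disjoint subsets'' quantifier in Theorem~\ref{thm:AGH} with the ``for every subset $J$'' quantifier in (3). That matching is handled by the observation that Lemma~\ref{lem:connect} automatically produces the two witness sets inside $K$ and $K^c$ respectively, so they are disjoint by construction; no genuine obstacle should arise.
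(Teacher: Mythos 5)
Your proposal is correct and matches the paper's approach: the paper gives no separate argument, simply declaring Corollary \ref{cor:AGH2} a restatement of Theorem \ref{thm:AGH} via Lemma \ref{lem:connect}, and your bookkeeping (the term-by-term match for (2)$\Leftrightarrow$(3) and the witness sets $A\subseteq K$, $B\subseteq K^c$, automatically disjoint, for (1)$\Leftrightarrow$(3)) is exactly the intended translation.
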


Corollary \ref{cor:AGH2} suggests the following strategy for checking whether the eigenspace $E(f)$ is nonempty and bounded in $(\R^n_{>0},d_H)$.  First, check whether $\Hp$ has any invariant subsets $J$.  If not, then $E(f)$ is nonempty and $d_H$-bounded by Theorem \ref{thm:GG}.  If there are invariant sets $J$ in $\Hp$, then for each one, check whether $\reach(J^c,\Hm)=[n]$.  If there are any sets $J \subset [n]$ such that $\reach(J, \Hp) \ne [n]$ and $\reach(J^c, \Hm) \ne [n]$, then those are the only sets where we need to check \eqref{superIllum} in Theorem \ref{thm:super}. We give examples in Section \ref{sec:examples} to demonstrate this process and show how we can further reduce the number of sets $J$ to check by using Theorem \ref{thm:quick} and Lemma \ref{lem:AsubB}. 

For some order-preserving homogeneous functions $f:\R^n_{>0} \rightarrow \R^n_{>0}$ there are faster ways to confirm that $f$ has a positive eigenvector.  The \emph{directed graph associated with} $f$ is the digraph $\G(f)$ with vertices $[n]$ and an arc from $i$ to $j$ when 
$$\lim_{t \rightarrow \infty} f(\exp(te_{\{j\}}))_i = \infty.$$
Note that $(i,j)$ with $i \ne j$ is a arc of $\G(f)$  if and only if $(\{j\},\{i\})$ is a hyperarc of $\Hp$. Also observe that the direction of the hyperarc $(\{j\},\{i\}) \in \Hp$ is the reverse of $(i,j) \in \G(f)$.  
For a nonnegative matrix $A= \begin{bmatrix} a_{ij} \end{bmatrix}_{i, j \in [n]}$, $\G(A)$ is equivalent to the usual directed graph associated with a matrix which has an arc from $i$ to $j$ precisely when $a_{ij} \ne 0$. 

In \cite[Theorem 2]{GaGu04}, the authors point out that if $\G(f)$ is strongly connected, then $f$ has an eigenvector in $\R^n_{>0}$. This directly generalizes the classical Perron-Frobenius theorem for irreducible matrices, since $A \in \R^{n \times n}_{\ge 0}$ is irreducible if and only if $\G(A)$ is strongly connected. This nonlinear version of the Perron-Frobenius theorem follows immediately from Theorem \ref{thm:GG}. If $\G(f)$ is strongly connected, then $\Hp$ has no invariant subsets, so Theorem \ref{thm:GG} implies all super-eigenspaces of $f$ are bounded in $(\R^n_{>0}, d_H)$. As described in the introduction, this guarantees that $E(f)$ is nonempty and bounded in $(\R^n_{>0},d_H)$.  If $\G(f)$ is not strongly connected, then its nodes can be partitioned into strongly connected components.  A strongly connected component is called a \emph{final class} if no arcs leave the component.  The following sufficient condition for existence of a positive eigenvector generalizes \cite[Theorem 2]{GaGu04} and is generally easier to check than Theorem \ref{thm:super} since the graph $\G(f)$ and its connected components can be computed relatively quickly even when the dimension $n$ is large.  
%We will prove the following generalization of \cite[Theorem 2]{GaGu04}.  

\begin{theorem} \label{thm:graphCond}
Let $f: \R^n_{>0} \rightarrow \R^n_{>0}$ be order-preserving and homogeneous. If $\G(f)$ has a unique final class $C$ and $r(f^{[n] \bs C}_0) < r(f)$, then the eigenspace $E(f)$ is nonempty and bounded in $(\R^n_{>0}, d_H)$.    
\end{theorem}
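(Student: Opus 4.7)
The plan is to verify the condition of Theorem \ref{thm:super}: for every nonempty proper $J \subsetneq [n]$, we must show $r(f^J_0) < \lambda(f^{J^c}_\infty)$. The argument splits naturally according to whether $J$ meets $C$. The key structural observation is that any nontrivial invariant subset $K$ of $\Hp$ is contained in $C^c$: reading off only the singleton-to-singleton part of the invariance, $K^c$ has no incoming arcs from $K$ in $\G(f)$, hence $K^c$ is forward-closed in $\G(f)$; in a finite digraph every nonempty forward-closed set contains a strongly connected sink, which by the uniqueness hypothesis is $C$, so $C \subseteq K^c$.

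If $J \cap C \neq \varnothing$, then no nontrivial invariant subset of $\Hp$ can contain $J$, so $\reach(J, \Hp) = [n]$ and Lemma \ref{lem:connect} yields $\lambda(f^{J^c}_\infty) = \infty$, making the inequality trivial. If instead $J \subseteq C^c$, then Lemma \ref{lem:AsubB} and the hypothesis give $r(f^J_0) \le r(f^{C^c}_0) < r(f)$, and it suffices to prove $\lambda(f^{J^c}_\infty) \ge r(f)$. Setting $I := \reach(J, \Hp)$, we may assume $I \subsetneq [n]$ (otherwise $\lambda(f^{J^c}_\infty) = \infty$); then $I \subseteq C^c$ is invariant in $\Hp$ and $\lambda(f^{J^c}_\infty) = \lambda(f^{I^c}_\infty)$ by Lemma \ref{lem:hyperorder}. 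Because of the invariance of $I$, the auxiliary map $\hat f \colon \R^{I^c}_{>0} \to \R^{I^c}_{>0}$ defined by $\hat f(y) := f(\tilde y)|_{I^c}$, where $\tilde y \in (0,\infty]^n$ has $\tilde y|_{I^c} = y$ and $\tilde y|_I = \infty$, is well-defined, order-preserving, and homogeneous, and satisfies $\lambda(\hat f) = \lambda(f^{I^c}_\infty)$.

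I propose to induct on $n$: verify that $\hat f$ itself satisfies the hypotheses of Theorem \ref{thm:graphCond} in dimension $|I^c| < n$, so the inductive hypothesis gives $E(\hat f) \neq \varnothing$, hence $\lambda(\hat f) = r(\hat f)$; together with $r(\hat f) \ge r(f)$, this yields $\lambda(f^{J^c}_\infty) \ge r(f)$, as required. The hypotheses to check on $\hat f$ are: (i) $\G(\hat f)$ has a unique final class $\hat C \supseteq C$, which follows because an arc $(i, j) \in \G(\hat f)$ corresponds to the hyperarc $(I \cup \{j\}, \{i\}) \in \Hp$, and monotonicity of $\Hp$ in the tail lets any path in $\G(f)$ from a vertex in $I^c$ to $C$ be compressed to a path in $\G(\hat f)$ by skipping $I$-vertices; (ii) $r(\hat f) \ge r(f)$, via the Collatz-Wielandt formula $r_K(g) = \sup\{\mu : \exists w \in K \setminus \{0\},\ g(w) \ge \mu w\}$ applied to $w = v|_{I^c}$, where $v \in \R^n_{\ge 0}$ is a Perron-Frobenius eigenvector of $f$ from Theorem \ref{thm:KR} (the hypothesis $r(f^{C^c}_0) < r(f)$ forces $\supp(v) \cap C \neq \varnothing$, so $v|_{I^c} \neq 0$ and $\hat f(v|_{I^c}) \ge r(f) v|_{I^c}$); and (iii) $r(\hat f^{\hat C^c}_0) < r(\hat f)$.

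Step (iii) is the principal obstacle. In the linear case $f(x) = Ax$ it reduces to the trivial principal-submatrix bound $r(A[\hat C^c]) \le r(A[C^c]) < r(A) = r(\hat f)$ using $\hat C^c \subseteq C^c$. In the nonlinear setting the analogous inequality must be proven by lifting a sub-eigenvector of $\hat f^{\hat C^c}_0$ to a sub-eigenvector of $f^{C^c}_0$: given $y \in \R^{\hat C^c}_{\ge 0}$ with $\hat f^{\hat C^c}_0(y) \ge \mu y$, one forms $z \in \R^{C^c}_{\ge 0}$ by setting $z|_{\hat C^c} = y$, $z|_{\hat C \bs C} = 0$, and $z|_I = t w_I$ for large $t$ and a carefully chosen $w_I \in \R^I_{>0}$, then uses homogeneity to pass to the limit $t \to \infty$; the choice of $w_I$ must ensure that the induced ratios $f(z)_i/z_i$ for $i \in I$ remain at least $\mu$, which is the delicate point of the argument.
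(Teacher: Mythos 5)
Your overall frame (reduce to Theorem \ref{thm:super} and split on whether $J$ meets $C$) matches the paper, and your case $J \cap C \neq \varnothing$ is exactly the paper's argument. The divergence is in the case $J \subseteq [n] \bs C$. Note first that Lemma \ref{lem:AsubB}(b) already gives $\lambda(f^{[n]\bs J}_\infty) \ge \lambda(f)$, so the entire content of this case is the single identity $\lambda(f) = r(f)$. The paper proves this identity directly: assuming $\lambda(f) < r(f)$, it takes the boundary eigenvectors $v \in \R^n_{\ge 0}$ (Theorem \ref{thm:KR}) and $w \in (0,\infty]^n$ (Corollary \ref{cor:formaleig}), shows $\supp(v)$ and $\supp(w)$ are disjoint, uses $r(f^{[n]\bs C}_0)<r(f)$ to place a point of $\supp(v)$ in $C$, and derives a contradiction from a path in $\G(f)$ into that point. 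You replace this global step by an induction on dimension through the reduced map $\hat f$ on $\R^{I^c}_{>0}$, and you yourself flag step (iii), $r(\hat f^{\hat C^c}_0) < r(\hat f)$, as the principal obstacle. The problem is that (iii) is not merely delicate: it is false in general, and so is the inductive conclusion $E(\hat f)\neq\varnothing$ you want to draw from it. (Your steps (i) and (ii) are fine; in fact (i) is easier than you make it, since invariance of $I$ in $\Hp$ already forbids any arc of $\G(f)$ from $I^c$ into $I$, so no compression of paths is needed.)

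Here is a concrete obstruction. Take $n=3$, fix $\epsilon>0$, and let $f_1(x)=x_1$, $f_2(x)=\min(x_1,x_3)+\min(x_2,x_3)+\epsilon\sqrt{x_1x_2}$, $f_3(x)=\tfrac12 x_3+\tfrac12\sqrt{x_1x_3}$. Then $\G(f)$ has only the arcs $2\to 1$ and $3\to 1$, so $C=\{1\}$ is the unique final class; $r(f)=1$ (test $x=(s^2,1,s)$, $s\to 0$); and $f^{\{2,3\}}_0$ is $(x_2,x_3)\mapsto(\min(x_2,x_3),\tfrac12 x_3)$, whose cone spectral radius is $\tfrac12<1$, so the hypotheses of Theorem \ref{thm:graphCond} hold. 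Now take $J=\{3\}\subseteq C^c$. Since $f_1(\omega_{\{3\}})=1$ and $f_2(\omega_{\{3\}})=2+\epsilon$ are finite, $\{3\}$ is invariant in $\Hp$, so $I=\reach(J,\Hp)=\{3\}$ and your reduced map is $\hat f(y_1,y_2)=(y_1,\ y_1+y_2+\epsilon\sqrt{y_1y_2})$ on $\R^2_{>0}$. It has unique final class $\hat C=\{1\}$ and $r(\hat f)=1$, but $r(\hat f^{\{2\}}_0)=1=r(\hat f)$, so (iii) fails; worse, $\hat f$ has no eigenvector in $\R^2_{>0}$ at all (the first coordinate forces eigenvalue $1$, and then $y_1+\epsilon\sqrt{y_1y_2}=0$ is impossible), so no choice of the lifted vector $w_I$ can rescue the induction. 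The mechanism is that setting the $I$-coordinates to $\infty$ can inflate the non-final part of the dynamics (here $\min(x_2,x_3)$ becomes $y_2$) in a way that $f^{[n]\bs C}_0$, which only sees finite values there, does not control. The inequality you actually need, $\lambda(\hat f)\ge r(f)$, does hold in this example ($\lambda(\hat f)=1$), but only because $\lambda(f)=r(f)$ --- precisely the global fact the paper proves and that your induction was meant to replace; as written, your proof does not establish it.
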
   

\begin{proof}
Suppose temporarily that $\lambda(f) < r(f)$. We may assume without loss of generality that $r(f) = 1$ by replacing $f$ with $r(f)^{-1}f$. By Theorem \ref{thm:KR}, there is an eigenvector $v \in \R^n_{\ge 0}$ such that $f(v) = v$.  By Corollary \ref{cor:formaleig}, there is also an eigenvector $w \in (0,\infty]^n$ such that $f(w) = \lambda(f) w$.  Let $J = \supp(v)$ and $I = \supp(w)$. For any $x \in \R^n_{>0}$ we can find $\alpha, \beta > 0$ such that $\alpha v \le x \le \beta w$. Then $\alpha v \le f^k(x) \le \beta \lambda(f)^k w$. This means that $\lim_{k \rightarrow \infty} f^k(x)_i = 0$ for all $i \in I$, while $f^k(x)_j$ is bounded below for all $j \in J$.  Therefore $I$ and $J$ are disjoint. Note that $r(f^J_0) = 1$. Since $r(f^{[n] \bs C}_0) < r(f) = 1$, Lemma \ref{lem:AsubB} implies that $J$ cannot be a subset of $[n] \bs C$. So there exists some $j \in J \cap C$. Since $C$ is the unique final class of $\G(f)$, there must be a path in $\G(f)$ from every $i \in [n]$ to $j$. Choose $i \in I$ and let $m$ be the length of a path from $i$ to $j$ in $\G(f)$. Then by the definition of $\G(f)$, we must have
$$\lim_{t \rightarrow \infty} f^m(\exp(te_{\{j\}}))_i = \infty.$$
At the same time, since $j \notin I$, there is a $\beta > 0$ such that $\exp(te_{\{j\}}) \le \beta w$ for all $t>0$.  Then 
$$f^m(\exp(te_{\{j\}}))_i \le \beta \lambda(f)^m w_i < \beta w_i < \infty$$ 
for all $t > 0$.  This is a contradiction, so we conclude that $\lambda(f) = r(f) = 1$.  

Let $A$ be a nonempty proper subset of $[n]$. If $A \subseteq [n] \bs C$, then 
$$r(f^A_0) \le r(f^{[n] \bs C}_0) < r(f) = \lambda(f) \le \lambda(f^{[n] \bs A}_\infty)$$
by Lemma \ref{lem:AsubB}. Therefore \eqref{superIllum} holds for all $A \subseteq [n] \bs C$.  On the other hand, if $A \cap C \ne \varnothing$, then there is a $j \in A \cap C$ and a path in $\Hp$ from $j$ to every $i \in [n]$.  This means that $\reach(A,\Hp) = [n]$, so $\lambda(f^{[n] \bs A}_\infty) = \infty$ by Lemma \ref{lem:connect}. Then \eqref{superIllum} also holds for $A$.  Since \eqref{superIllum} holds for all nonempty proper $A \subset [n]$, we conclude by Theorem \ref{thm:super} that $E(f)$ is nonempty and bounded in $(\R^n_{>0},d_H)$.
\end{proof}

\begin{remark}
The number of conditions to check in Theorem \ref{thm:super} grows exponentially as the dimension $n$ increases. %Sometimes the number of computations needed can be reduced as in Theorems \ref{thm:quick} and \ref{thm:graphCond}. However, 
If $\text{P} \ne \text{NP}$, then there cannot be a polynomial time necessary and sufficient condition for $E(f)$ to be nonempty and bounded in $(\R^n_{>0},d_H)$ that applies to all order-preserving homogeneous maps $f: \R^n_{>0} \rightarrow \R^n_{>0}$.  This follows from a result of Yang and Zhao \cite{YaZh04}. They study a special class of order-preserving homogeneous min-max operators called monotone boolean functions. Every monotone boolean function has a trivial eigenvector, however \cite[Proposition 1]{YaZh04} proves that determining whether the trivial eigenvector is the only eigenvector (up to scaling) is co-NP hard. For monotone boolean functions, the eigenspace is $d_H$-bounded if and only if the trivial eigenvector is the only eigenvector up to scaling. 
\end{remark}

Although determining whether general order-preserving homogeneous functions have a nonempty and $d_H$-bounded eigenspace quickly becomes computationally intractable when the dimension gets large, we will see in the next section that the situation improves considerably when our functions have an additional convexity property. 

\section{Multiplicatively convex functions.} \label{sec:conv}

Many order-preserving homogeneous functions $f:\R^n_{>0} \rightarrow \R^n_{>0}$ have the additional property that $\log \circ f \circ \exp$ is convex. Recall that a function $g:D \rightarrow \R$ is \emph{convex} when its \emph{epigraph} 
$$\epi(f) := \{ (x,y) \in D \times \R : y \ge f(x) \}$$ 
is convex. A vector-valued function $f:D \rightarrow \R^n$ is \emph{convex} if each entry of $f$ is convex. When $\log \circ f \circ \exp$ is convex, we will say that $f$ is \emph{multiplicatively convex}.  Note that $f:\R^m_{>0} \rightarrow \R^n_{>0}$ is multiplicatively convex if and only if for all $x, y \in \R^m_{>0}$ and $0 \le \theta \le 1$, 
$$f(x^\theta y^{1-\theta}) \le f(x)^{\theta} f(y)^{1-\theta} $$
where this notation is understood to indicate entrywise products and powers. We will continue to use this notation throughout the rest of this paper.  

The following result is a special case of \cite[Proposition 6.1]{Nussbaum86}. The proof is short, so we include it here.
\begin{lemma} \label{lem:prop61}
Let $f, g:\R^m_{>0} \rightarrow \R^n_{>0}$ be multiplicatively convex.  Then $f+g$ is multiplicatively convex, and if $f$ is also order-preserving and $m = n$, then $f \circ g$ is multiplicatively convex.  
\end{lemma}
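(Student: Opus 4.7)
The plan is to verify the two multiplicative convexity inequalities directly from the definition
$$h(x^\theta y^{1-\theta}) \le h(x)^\theta h(y)^{1-\theta}$$
for $h = f+g$ and $h = f \circ g$ respectively.

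For the composition $f \circ g$, I would use the order-preserving hypothesis on $f$ as a bridge. Fix $x, y \in \R^m_{>0}$ and $\theta \in [0,1]$. Multiplicative convexity of $g$ gives $g(x^\theta y^{1-\theta}) \le g(x)^\theta g(y)^{1-\theta}$ entrywise, so applying the order-preserving map $f$ yields $f(g(x^\theta y^{1-\theta})) \le f(g(x)^\theta g(y)^{1-\theta})$. Then multiplicative convexity of $f$ applied to the points $g(x), g(y) \in \R^n_{>0}$ gives $f(g(x)^\theta g(y)^{1-\theta}) \le f(g(x))^\theta f(g(y))^{1-\theta}$. Chaining these inequalities produces exactly the required bound for $f \circ g$.

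For the sum $f+g$, the main step is the scalar inequality
$$a^\theta b^{1-\theta} + c^\theta d^{1-\theta} \le (a+c)^\theta (b+d)^{1-\theta}$$
for all $a,b,c,d > 0$ and $\theta \in [0,1]$. This is the standard two-term Hölder inequality (equivalently the superadditivity of weighted geometric means), and I would cite it or give a one-line derivation via the weighted AM-GM inequality applied to $a/(a+c), c/(a+c)$ and $b/(b+d), d/(b+d)$. Applying this entrywise with $a = f(x)_i$, $b = f(y)_i$, $c = g(x)_i$, $d = g(y)_i$, together with multiplicative convexity of $f$ and $g$, gives
$$(f+g)(x^\theta y^{1-\theta}) \le f(x)^\theta f(y)^{1-\theta} + g(x)^\theta g(y)^{1-\theta} \le (f(x)+g(x))^\theta (f(y)+g(y))^{1-\theta},$$
which is the desired inequality for $f+g$.

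The only nontrivial ingredient is the scalar Hölder inequality above; the composition statement is essentially immediate once the order-preserving hypothesis is invoked in the right spot. I would present the two parts in the order $f \circ g$ first (shorter, cleaner) and then $f + g$ with the scalar lemma quoted.
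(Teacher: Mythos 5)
Your proposal is correct and follows essentially the same route as the paper: the composition part chains multiplicative convexity of $g$, the order-preserving property of $f$, and multiplicative convexity of $f$ in exactly the same order, and the sum part is the paper's application of the two-term H\"older inequality (superadditivity of weighted geometric means) applied entrywise after using multiplicative convexity of $f$ and $g$. No gaps; nothing further is needed.
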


\begin{proof}
By H{\"o}lder's inequality,
\begin{align*}
f(x^\theta y^{1-\theta})_i + g(x^\theta y^{1-\theta})_i &\le f(x)_i^\theta f(y)_i^{1-\theta} + g(x)_i^\theta g(y)_i^{1-\theta} \\ 
 &\le [f(x)_i + g(x)_i]^\theta [f(y)_i + g(y)_i]^{1-\theta}
\end{align*}
for each $i \in [n]$ and $0 < \theta < 1$. Therefore $f+g$ is multiplicatively convex. If $f$ is also order-preserving and $m = n$, then 
\begin{align*}
f(g(x^\theta y^{1-\theta})) &\le f(g(x)^\theta g(y)^{1-\theta})\\
&\le f(g(x))^{\theta} f(g(y))^{1-\theta}, 
\end{align*}
which proves that $f \circ g$ is multiplicatively convex.
\end{proof}

An immediate consequence of Lemma \ref{lem:prop61} is that any order-preserving linear map $f:\R^m_{>0} \rightarrow \R^n_{>0}$ is multiplicatively convex.  
Other important classes of order-preserving, homogeneous, multiplicatively convex functions include matrices over the max-times algebra (see e.g., \cite{MuPe15}), and the class $\mathcal{M}_+$ introduced in \cite{Nussbaum89} (see also \cite[Proposition 3.1 and Equation 3.15]{Nussbaum86} and \cite[Section 6.6]{LemmensNussbaum}). We will discuss the class $\mathcal{M}_+$ in more detail in Section \ref{sec:examples}.

When $f:\R^n_{>0} \rightarrow \R^n_{>0}$ is order-preserving, homogeneous, and multiplicatively convex, it is possible to give very general conditions for existence of eigenvectors in $\R^n_{>0}$, even when the eigenspace $E(f)$ might not be bounded in Hilbert's projective metric.  

Let $C_1, \ldots, C_m$ denote the strongly connected components of $\G(f)$.  Borrowing some terminology from the theory of nonnegative matrices (see e.g., \cite[Definition 2.3.8]{BermanPlemmons}), we will say that $C_j$ is a \emph{basic class} of $f$ if $r(f^{C_j}_0) = r(f)$.

In \cite{HuQi16}, Hu and Qi define what they call strongly nonnegative tensors. These tensors correspond to a special class of order-preserving, homogeneous, and multiplicatively convex functions on $\R^n_{>0}$. The following definition extends their notion of strong nonnegativity to all order-preserving, homogeneous, and multiplicatively convex functions on $\R^n_{>0}$. 

\begin{definition}
Let $f: \R^n_{>0} \rightarrow \R^n_{>0}$ be order-preserving, homogeneous, and multiplicatively convex. Let $\mathcal{C}$ denote the set of strongly connected components of $\G(f)$.  We will say that $f$ is \emph{strongly nonnegative} if $r(f^{C}_0) = r(f)$ for every final class $C \in \mathcal{C}$ and $r(f^{C}_0) < r(f)$ when $C \in \mathcal{C}$ is not a final class.  That is, $f$ is strongly nonnegative if and only if its basic and final classes coincide.
\end{definition}

%The main results of this section are the following two theorems. 
The following theorems are the main results of this section. The first theorem shows the relationship between the upper and lower Collatz-Wielandt numbers and the strongly connected components of $\G(f)$. 

\begin{theorem} \label{thm:basicClass}
Let $f: \R^n_{>0} \rightarrow \R^n_{>0}$ be order-preserving, homogeneous, and multiplicatively convex.  Let $\mathcal{C}$ denote the set of strongly connected components of $\G(f)$ and let $\mathcal{F}$ denote the set of final classes of $\G(f)$.  Then 
$$r(f) = \max_{C \in \mathcal{C}} r(f^{C}_0) ~~~~ \text{ and } ~~~~ \lambda(f) = \min_{C \in \mathcal{F}} r(f^{C}_0).$$
\end{theorem}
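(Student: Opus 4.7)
The plan is to derive both identities from a single structural fact: for our multiplicatively convex, order-preserving, homogeneous $f$ and any $i \in [n]$, if $\G(f)$ contains no arc $i \to k$ for any $k$ in a subset $S \subseteq [n]$, then $f(x)_i$ is independent of the coordinates $x|_S$. Indeed, $h_i := \log f_i \circ \exp$ is convex on $\R^n$ (by multiplicative convexity) and non-decreasing in every coordinate (by order-preservation); the absence of an arc $i \to k$ forces the recession of $h_i$ in direction $e_{\{k\}}$ to be non-positive, monotonicity forces it to be non-negative, so it vanishes, and convexity then pins $h_i$ as constant along $e_{\{k\}}$. Iterating over $k \in S$ gives the claim. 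The main subtlety is handling boundary values where $h_i$ may equal $-\infty$; the Burbanks--Sparrow extension (Theorem~\ref{thm:BS}) together with the scaling arguments used in Lemmas~\ref{lem:near} and \ref{lem:llim} reduce this to the interior case.

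For $r(f) = \max_{C \in \mathcal{C}} r(f^C_0)$, Lemma~\ref{lem:AsubB} immediately gives $\max_{C} r(f^C_0) \le r(f)$. For the reverse, take a nonzero $v \in \R^n_{\ge 0}$ with $f(v) = r(f)v$ from Theorem~\ref{thm:KR} and set $J := \supp(v)$. The condensation DAG of the induced subgraph $\G(f)|_J$ has at least one sink, giving a final class $C'$ of $\G(f)|_J$. The structural fact, with $K = C'$ and $S = J \bs C'$, combined with $v|_{J^c} = 0$, shows that $f(v)_i = f^{C'}_0(v|_{C'})_i$ for every $i \in C'$. Thus $v|_{C'} \in \R^{C'}_{>0}$ is a positive eigenvector of $f^{C'}_0|_{\R^{C'}_{>0}}$ with eigenvalue $r(f)$, so $r(f^{C'}_0) = r(f)$. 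Since $C' \subseteq C$ for some $C \in \mathcal{C}$, Lemma~\ref{lem:AsubB} forces $r(f^C_0) = r(f)$.

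For $\lambda(f) = \min_{C \in \mathcal{F}} r(f^C_0)$, the upper bound follows by applying the structural fact with $K = C$ and $S = C^c$ for each $C \in \mathcal{F}$: this yields $f(x)_i = f^C_0(x|_C)_i$ for all $i \in C$, whence $\min_i f(x)_i/x_i \le \min_{i \in C} f^C_0(x|_C)_i/x_i$; taking the supremum over $x \in \R^n_{>0}$ gives $\lambda(f) \le r(f^C_0)$ (the equality $\lambda(f^C_0|_{\R^C_{>0}}) = r(f^C_0)$ uses that $\G(f)|_C$ is strongly connected, so Theorem~\ref{thm:GG} supplies a positive Perron eigenvector). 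For the lower bound, invoke Corollary~\ref{cor:formaleig} to produce $y \in (0,\infty]^n$ with $f(y) = \lambda(f)y$ and $I := \{i : y_i < \infty\}$ nonempty. An arc $i \to j$ in $\G(f)$ with $i \in I$, $j \in I^c$ would force $f(y)_i = \infty = \lambda(f) y_i$ and hence $y_i = \infty$, a contradiction; so $I$ has no outgoing arcs, is a union of SCCs that is downward-closed in the condensation DAG, and therefore contains at least one final class $C^*$. Applying the structural fact to $C^*$, we find $y|_{C^*}$ is a positive eigenvector of $f^{C^*}_0|_{\R^{C^*}_{>0}}$ with eigenvalue $\lambda(f)$, giving $r(f^{C^*}_0) = \lambda(f) \ge \min_{C \in \mathcal{F}} r(f^C_0)$.
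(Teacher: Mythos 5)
Your proposal is correct and follows essentially the same route as the paper: your ``structural fact'' is the paper's Lemma \ref{lem:convEq} (proved, as you do, via the convexity/boundedness argument of Lemma \ref{lem:convConst}), and you then combine it with the boundary eigenvector from Theorem \ref{thm:KR}, the eigenvector at infinity from Corollary \ref{cor:formaleig}, the monotonicity of Collatz--Wielandt numbers in Lemma \ref{lem:AsubB}, and the strong-connectivity existence result on each final class (the paper's Lemma \ref{lem:bdryEig}). The only differences are organizational -- you take sinks of condensation DAGs of induced subgraphs where the paper fixes a global topological ordering of the components, and you work with $f^{C}_0$ where the paper's $\lambda$-argument passes through $f^{C}_\infty$ -- and these do not change the substance of the proof.
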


The second theorem gives very general conditions for the existence of eigenvectors when $f$ is multiplicatively convex, even when the eigenspace $E(f)$ might not be bounded in Hilbert's projective metric.  Note that conditions \ref{item:strongNonneg} and \ref{item:analyticConverse} generalize \cite[Theorem 5]{HuQi16} and corresponding results for reducible nonnegative matrices \cite[Theorem 2.3.10]{BermanPlemmons}.  

\begin{theorem} \label{thm:convMain}
Let $f:\R^n_{>0} \rightarrow \R^n_{>0}$ be order-preserving, homogeneous, and multiplicatively convex.  
\begin{enumerate}[(a)]
\item \label{item:convex} $E(f)$ is nonempty and bounded in $(\R^n_{>0},d_H)$ if and only if $f$ is strongly nonnegative and $\G(f)$ has only one final class.
\item \label{item:strongNonneg} If $f$ is strongly nonnegative, then $E(f)$ is nonempty.
\item \label{item:analyticConverse} If $f$ is real analytic and $E(f)$ is nonempty, then $f$ is strongly nonnegative.
\end{enumerate}
\end{theorem}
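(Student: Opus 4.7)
The plan is to use Theorem \ref{thm:basicClass} as the workhorse throughout, since the identities $r(f) = \max_{C \in \mathcal{C}} r(f^C_0)$ and $\lambda(f) = \min_{C \in \mathcal{F}} r(f^C_0)$ repackage the SCC structure of $\G(f)$ into the Collatz-Wielandt numbers that feed directly into Theorems \ref{thm:graphCond} and \ref{thm:super}. For the reverse direction of (a), if $f$ is strongly nonnegative with unique final class $C$, then the restricted map $f^{[n] \bs C}_0$ inherits order-preservation, homogeneity, and multiplicative convexity from $f$, and its SCCs coincide with the non-final SCCs of $\G(f)$; applying Theorem \ref{thm:basicClass} to $f^{[n] \bs C}_0$ combined with strong nonnegativity yields $r(f^{[n] \bs C}_0) = \max_{C' \subseteq [n] \bs C} r(f^{C'}_0) < r(f)$, and Theorem \ref{thm:graphCond} then delivers that $E(f)$ is nonempty and $d_H$-bounded.

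For the forward direction of (a) and the initial reduction of (c), the hypothesis $E(f) \ne \emptyset$ forces $r(f) = \lambda(f)$, so Theorem \ref{thm:basicClass} immediately yields $r(f^C_0) = r(f)$ for every final class. To obtain strict inequality on non-final SCCs and uniqueness of the final class in (a), I plan to apply Theorem \ref{thm:super} at targeted subsets $J$: if there were two distinct final classes $C_1, C_2$, the choice $J = C_1$ would force $\lambda(f^{[n] \bs C_1}_\infty) > r(f)$, and the main obstacle will be to derive the competing upper bound $\lambda(f^{[n] \bs C_1}_\infty) \le r(f^{C_2}_0) = r(f)$. I expect to reach this via Corollary \ref{cor:formaleig} applied to $f^{[n] \bs C_1}_\infty$ together with the reciprocal transformation $L$ from \eqref{L} and a concave analogue of Theorem \ref{thm:basicClass} for $LfL$; a parallel argument then rules out non-final basic SCCs in (a). For part (c), without $d_H$-boundedness as hypothesis, a putative non-final basic class $C'$ produces a boundary eigenvector $v$ of $f^{C'}_0$ via Theorem \ref{thm:KR}, and non-finality supplies $i \in C'$ and $j \notin C'$ with $\lim_{t \to \infty} f(\exp(te_{\{j\}}))_i = \infty$; combining $v$ with the interior eigenvector $u$ and using the analytic convex structure of the eigenvalue defect $y \mapsto \log f(\exp y) - y - \log r(f)$ should force a contradiction, though making this analytic propagation precise is the main obstacle of the argument.

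For part (b), strong nonnegativity alone does not give a unique final class, so the plan is a perturbation argument. Set $f_\epsilon := f + \epsilon T$ with $T(x)_i := \sum_j x_j$; then $T$ is multiplicatively convex (log-sum-exp is convex) and order-preserving, so $f_\epsilon$ is multiplicatively convex by Lemma \ref{lem:prop61}. Moreover $\G(f_\epsilon)$ is strongly connected, so $f_\epsilon$ is trivially strongly nonnegative with unique final class $[n]$, and part (a) supplies a nonempty $d_H$-bounded $E(f_\epsilon)$. Extracting a subsequential limit $u_0$ of normalized eigenvectors as $\epsilon \to 0$ gives $u_0 \in \R^n_{\ge 0}$ with $f(u_0) = r(f) u_0$; the remaining obstacle is that $u_0$ may a priori lie on the boundary, and if $\supp(u_0) = J \subsetneq [n]$ then $r(f^J_0) = \lambda(f^J_0) = r(f)$, so Theorem \ref{thm:basicClass} applied to $f^J_0$ produces a basic SCC in $\G(f^J_0)$ which, pulled back to $\G(f)$ under strong nonnegativity of $f$, should force the contradiction $J = [n]$.
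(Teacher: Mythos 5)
The reverse direction of (a) is essentially the paper's argument (it is Lemma \ref{lem:classCap} plus Theorem \ref{thm:graphCond}; note only that the strongly connected components of $\G(f^{[n]\bs C}_0)$ need not coincide with the non-final components of $\G(f)$ — they are merely contained in them, which is all you need via Lemma \ref{lem:AsubB}), and deducing from $r(f)=\lambda(f)$ and Theorem \ref{thm:basicClass} that every final class is basic is correct. The forward direction of (a), however, stalls exactly where you flag it: you need $\lambda(f^{[n]\bs C_1}_\infty)\le r(f^{C_2}_0)$, and the ``concave analogue of Theorem \ref{thm:basicClass} for $LfL$'' is neither proved nor likely to deliver this. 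The map $LfL$ is multiplicatively \emph{concave}, and the digraph that would govern such an analogue is built from the arcs with $\lim_{t\rightarrow\infty}f(\exp(-te_{\{j\}}))_i=0$ (i.e.\ $\Hm$-type behavior), which bears no a priori relation to the final classes of $\G(f)$. The missing idea is precisely where multiplicative convexity enters the paper's proof: by Lemmas \ref{lem:convConst} and \ref{lem:convEq}, if no arcs leave a class $C$ in $\G(f)$ then $f(x)_j$, $j\in C$, does not depend on the coordinates outside $C$ at all, so $f^C_0$ and $f^C_\infty$ agree on $C$ and $\lambda(f^C_\infty)=r(f^C_0)$ (Lemma \ref{lem:bdryEig}). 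With that identity, Lemma \ref{lem:AsubB} and Theorem \ref{thm:super} applied to $J=[n]\bs C$ give both the uniqueness of the final class and strong nonnegativity; no statement about $LfL$ is needed.

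Part (b) has a genuine gap at its crux. Granting that a subsequential limit $u_0$ of the normalized eigenvectors of $f+\epsilon T$ satisfies $f(u_0)=r(f)u_0$, your contradiction argument for interiority does not work: if $\supp(u_0)=J$, Theorem \ref{thm:basicClass} applied to the restriction of $f^J_0$ only tells you that every final class of $\G(f^J_0|_J)$ has $r(f^B_0)=r(f)$ and hence sits inside a final (basic) class of $\G(f)$ — which is perfectly consistent with $J\subsetneq[n]$. Indeed, under the hypotheses of (b) boundary eigenvectors with eigenvalue $r(f)$ supported on the union of the final classes always exist (sum the final-block Perron vectors, using Lemma \ref{lem:convEq}), so no graph-theoretic pullback can force $J=[n]$; making the perturbation route work would require a quantitative estimate analogous to the positivity of the projected perturbation in linear spectral perturbation theory, for which there is no nonlinear substitute in hand. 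The paper instead takes exactly such a boundary eigenvector $u$ supported on the union $J$ of the final classes, notes that the polyhedral cone $K=\{x+tu: x\in\R^{[n]\bs J}_{\ge 0},\ t\ge 0\}$ is $f$-invariant by Lemma \ref{lem:convEq}, conjugates to obtain a map on $\R^{m+1}_{>0}$ with a unique final class, and applies part (a). Finally, part (c) as written is a sketch with an acknowledged ``main obstacle,'' not a proof; the paper's argument is concrete and different: it works with $A=f'(u)$, uses $\G(f'(u))=\G(f)$ (Lemma \ref{lem:samederiv}, where real analyticity and convexity combine), and derives $(f^{[n]\bs J}_0)^p(u)\le f^p(u-\epsilon e_J)\ll u$ to conclude $r(f^{[n]\bs J}_0)<r(f)$, which rules out non-final basic classes.
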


More is known for max-times functions: \cite[Theorem 3.4]{BaStVa95} and \cite[Theoreme 2.2.4]{Gaubert92} independently proved that a matrix in the max-times algebra has an entrywise positive eigenvector if and only if all of its final classes are basic. So it is possible for a multiplicatively convex function that is not strongly nonnegative to have an entrywise positive eigenvector if it is not real analytic. 

In general, it is much easier to check the conditions in Theorem \ref{thm:convMain} than the conditions of Theorem \ref{thm:super}. The strongly connected components of $\G(f)$ can be computed efficiently, and there are at most $n$ Collatz-Wielandt numbers to compute. For more details about the complexity aspects of this problem, see \cite[Section 4.2.3]{AkGaHo20}. See also \cite[Corollary 4.4]{AkGaHo20} which gives a sufficient condition for all slice spaces to be $d_H$-bounded when $f$ is multiplicatively convex.

To prove Theorems \ref{thm:basicClass} and \ref{thm:convMain}, we need some preliminary results. 

\begin{lemma} \label{lem:convConst}
Let $g: (0,\infty) \rightarrow (0,\infty)$ be order-preserving and multiplicatively convex.  Then $\lim_{x \rightarrow \infty} g(x) < \infty$ if and only if $g$ is constant. If $g$ is also real analytic, then $\lim_{x \rightarrow \infty} g(x) < \infty$ if and only if $g'(x) = 0$ for some $x > 0$.  
\end{lemma}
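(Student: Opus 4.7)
The whole proof should reduce to a standard fact about convex nondecreasing functions on $\R$ by way of the substitution $h(t) := \log g(e^t)$. The hypotheses translate cleanly: $g$ multiplicatively convex means $h$ is convex on $\R$, $g$ order-preserving means $h$ is nondecreasing, and $g > 0$ ensures $h$ is well-defined and real-valued. Moreover $\lim_{x \to \infty} g(x) < \infty$ iff $\lim_{t \to \infty} h(t) < \infty$, and if $g$ is real analytic on $(0,\infty)$ then $h$ is real analytic on $\R$. So the lemma will follow once I show: (i) a convex nondecreasing function on $\R$ with finite limit at $+\infty$ is constant, and (ii) if such an $h$ is also real analytic and has $h'(t_0)=0$ at a single point, then $h$ is constant.

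For (i), fix $s < t$; by convexity, for every $u > t$,
\[
h(t) - h(s) \le \frac{t-s}{u-s}\bigl(h(u) - h(s)\bigr).
\]
Letting $u \to \infty$, the right-hand side tends to $0$ since $h$ has a finite limit. Thus $h(t) \le h(s)$; combined with monotonicity $h(t) \ge h(s)$, we get $h(t)=h(s)$. Since $s < t$ were arbitrary, $h$ is constant, so $g$ is constant. The converse direction is trivial.

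For (ii), suppose $g$ is real analytic and $g'(x_0)=0$ for some $x_0 > 0$. Writing $t_0 := \log x_0$, the chain rule gives $h'(t_0) = g'(x_0)\,e^{t_0}/g(x_0) = 0$. Since $h$ is convex, $h'$ is nondecreasing; since $h$ is nondecreasing, $h' \ge 0$. Therefore $h'(t)=0$ for all $t \le t_0$, i.e.\ $h'$ vanishes on an open interval. Because $h$, hence $h'$, is real analytic on $\R$, the identity theorem forces $h' \equiv 0$ on $\R$, so $h$ (and therefore $g$) is constant; in particular $\lim_{x \to \infty} g(x) < \infty$. Conversely, if $\lim g < \infty$, part (i) already gives that $g$ is constant, so $g'(x)=0$ for every $x > 0$.

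There is no serious obstacle; the only subtlety is making sure $h$ is differentiable where we invoke $h'(t_0)=0$, which is handled by real analyticity of $g$ in the second statement. (For the first statement we never needed to differentiate.)
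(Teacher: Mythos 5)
Your proposal is correct and follows essentially the same route as the paper: both arguments exploit convexity of $\log \circ g \circ \exp$ to show that a finite limit at infinity forces constancy, and in the analytic case both observe that a vanishing derivative propagates to all smaller arguments by convexity and monotonicity, after which the identity theorem for real analytic functions forces the derivative to vanish identically. The only (cosmetic) differences are that you phrase the first part via the three-point convexity inequality where the paper invokes a support line of positive slope, and you work with $h'$ rather than $g'$ when applying the identity theorem.
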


\begin{proof}
If $g$ is constant, then $\lim_{x \rightarrow \infty} g(x) < \infty$. Conversely, if $g$ is not constant, there exists $x < y$ such that $g(x) < g(y)$.  Then the epigraph of $\log \circ g \circ \exp$ has a support line with a positive slope.  It follows that $\lim_{x \rightarrow \infty} g(x) = \infty$. 
If $g$ is also real analytic and $g'(x) = 0$ for some $x \in \R$, then $g'(y) = 0$ for all $y < x$, by convexity.  But then $g'$ is identically zero on all of $(0,\infty)$ which means that $g$ is constant.
\end{proof}

\begin{lemma} \label{lem:convEq}
Let $f: \R^n_{>0} \rightarrow \R^n_{>0}$ be order-preserving, homogeneous, and multiplicatively convex.  Suppose that there are no arcs leaving $J$ in $\G(f)$ for some nonempty $J \subset [n]$. Then
$$f(x)_j = f^J_0(x)_j = f^J_\infty(x)_j$$
for all $x \in \R^n_{>0}$ and $j \in J$.  
\end{lemma}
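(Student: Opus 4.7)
The plan is to reduce the statement to a one-dimensional application of Lemma \ref{lem:convConst}. The main intermediate claim I would establish is that for each $j \in J$, the value $f(x)_j$ does not actually depend on the coordinates $x_k$ with $k \in J^c$; once this is known, both equalities follow by passing to the limit with the continuous extensions from Theorem \ref{thm:BS}.

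First I would fix $j \in J$, $k \in J^c$, and $x \in \R^n_{>0}$, and study the one-parameter function
$$g(u) := f(x + (u - x_k)e_{\{k\}})_j, \qquad u \in (0,\infty),$$
which is obtained by varying only the $k$-th coordinate of the input. Since $f$ is order-preserving, so is $g$. It is also multiplicatively convex: if $y^{(1)}$ and $y^{(2)}$ denote the inputs with $k$-th coordinate $u_1$ and $u_2$ respectively and all other coordinates equal to those of $x$, then the entrywise product $(y^{(1)})^\theta (y^{(2)})^{1-\theta}$ is precisely the input with $k$-th coordinate $u_1^\theta u_2^{1-\theta}$, and the multiplicative convexity of $f$ gives the required inequality for $g$.

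The main step, and the place where the hypothesis on $\G(f)$ does its work, is to show $\lim_{u \to \infty} g(u) < \infty$. Since $j \in J$ and $k \in J^c$, the assumption that no arcs leave $J$ in $\G(f)$ gives $\lim_{t \to \infty} f(\exp(t e_{\{k\}}))_j < \infty$. Choosing a constant $C \ge 1$ that also dominates every $x_\ell$ with $\ell \ne k$, one checks coordinate by coordinate that $x + (u - x_k)e_{\{k\}} \le C \exp((\log u)\, e_{\{k\}})$ for all $u > 0$. Order-preservingness and homogeneity then yield
$$g(u) \le C\, f\bigl(\exp((\log u)\, e_{\{k\}})\bigr)_j,$$
whose limit as $u \to \infty$ equals $C \cdot \lim_{t\to\infty} f(\exp(te_{\{k\}}))_j < \infty$. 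Lemma \ref{lem:convConst} then forces $g$ to be constant, which is the one-variable claim I wanted.

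Iterating over $k \in J^c$ proves that $f(x)_j$ depends only on $(x_\ell)_{\ell \in J}$ for every $j \in J$. To finish, I would approximate $P^J_0 x$ by points $x^{(m)} \in \R^n_{>0}$ that agree with $x$ on $J$ and whose $J^c$-coordinates shrink to $0$, and approximate $P^J_\infty x$ by points with $J^c$-coordinates tending to $\infty$. The previous paragraph gives $f(x^{(m)})_j = f(x)_j$ for every $m$, and the continuous extensions $\underline{f}$ and $\overline{f}$ of Theorem \ref{thm:BS} then deliver $f(P^J_0 x)_j = f(P^J_\infty x)_j = f(x)_j$ for all $j \in J$. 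Because the outer projection in $f^J_0 = P^J_0 f P^J_0$ and $f^J_\infty = P^J_\infty f P^J_\infty$ acts as the identity on $J$-coordinates, one has $f^J_0(x)_j = f(P^J_0 x)_j$ and $f^J_\infty(x)_j = f(P^J_\infty x)_j$ for $j \in J$, and the conclusion follows.
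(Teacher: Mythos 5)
Your proof is correct and follows essentially the same route as the paper: use the no-arcs-leaving-$J$ hypothesis together with order-preservingness and homogeneity to bound the one-variable slice $u \mapsto f(x+(u-x_k)e_{\{k\}})_j$, invoke Lemma \ref{lem:convConst} to conclude it is constant, and then pass to the limits $x_k \to 0$ and $x_k \to \infty$ via the continuous extensions of Theorem \ref{thm:BS}. The only differences are cosmetic (your parametrization of the slice by the coordinate value rather than by $x\exp(te_{\{k\}})$, and a more explicit write-up of the domination bound and the limiting step).
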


\begin{proof}
No arcs leaving $J$ in $\G(f)$ means that $\lim_{t \rightarrow \infty} f(\exp(te_{\{i\}}))_j < \infty$ for all $j \in J$ and $i \in [n] \bs J$. Since $f$ is order-preserving and homogeneous, 
$$\lim_{t \rightarrow \infty} f(x \exp(te_{\{i\}}))_j \le (\max_{k \in [n]} x_k) \lim_{t \rightarrow \infty}  f(\exp(t e_{\{i\}}))_j < \infty$$ 
for all $x \in \R^n$. By Lemma \ref{lem:convConst} this means that $t \mapsto f(x\exp(t e_{\{i \}}))_j$ is constant.  Therefore the value of $f(x)_j$ does not change if we change the entries $x_i$ with $i \in [n] \bs J$.  
By letting $x_i \rightarrow 0$ (respectively, $x_i \rightarrow \infty$) for all $i \in [n] \bs J$, we get $f(x)_j = f^J_0(x)_j$ (and $f(x)_j = f^J_\infty(x)_j$) for all $j \in J$. 
\end{proof}

\begin{lemma} \label{lem:bdryEig}
Let $f: \R^n_{>0} \rightarrow \R^n_{>0}$ be order-preserving, homogeneous, and multiplicatively convex. If $C$ is a final class of $\G(f)$, then $f^C_0$ and $f^C_\infty$ both have eigenvectors with support equal to $C$ and eigenvalue equal to $r(f^C_0) = \lambda(f^C_\infty)$. 
\end{lemma}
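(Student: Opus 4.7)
The plan is to restrict $f$ to the coordinates in $C$ and reduce to a strongly connected situation. Since $C$ is a final class of $\G(f)$, no arcs leave $C$, so Lemma \ref{lem:convEq} gives $f(x)_j = f^C_0(x)_j = f^C_\infty(x)_j$ for every $j \in C$ and every $x \in \R^n_{>0}$; in particular $f(x)_j$ depends only on the coordinates $(x_i)_{i \in C}$. Identifying the coordinate space on $C$ with $\R^m_{>0}$ where $m = |C|$, this lets us define an order-preserving, homogeneous, multiplicatively convex map $g : \R^m_{>0} \to \R^m_{>0}$ by $g(y)_j := f(\tilde y)_j$ for $j \in C$, where $\tilde y \in \R^n_{>0}$ is any extension of $y$. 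A direct check shows that the arcs of $\G(g)$ are exactly the arcs of $\G(f)$ with both endpoints in $C$, so $\G(g)$ is strongly connected because $C$ is a strongly connected component of $\G(f)$.

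Once $g$ has been isolated, I would apply the argument of Section \ref{sec:hypergraph} based on Theorem \ref{thm:GG}: strong connectedness of $\G(g)$ forces $\mathcal{H}^+_\infty(g)$ to have no nonempty invariant proper subsets, so all super-eigenspaces of $g$ are $d_H$-bounded and in particular $E(g)$ is nonempty. Pick $y \in \R^m_{>0}$ with $g(y) = r(g)\, y$; since this eigenvector lies in the interior, Lemma \ref{lem:mindisp} yields $r(g) = \lambda(g)$.

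To produce the desired eigenvectors of $f^C_0$ and $f^C_\infty$, I would extend $y$ in the two natural ways: define $v \in \R^n_{\ge 0}$ by $v_j = y_j$ for $j \in C$ and $v_j = 0$ otherwise, and $w \in (0,\infty]^n$ by $w_j = y_j$ for $j \in C$ and $w_j = \infty$ otherwise. Using the continuous extensions of $f$ from Theorem \ref{thm:BS}, the identities in Lemma \ref{lem:convEq} pass to the boundary by approximating $v$ with $v + \epsilon \mathbf{1} \in \R^n_{>0}$ and $w$ with vectors whose $C^c$-entries tend to infinity. This gives $f^C_0(v)_j = g(y)_j = r(g)\, v_j$ and $f^C_\infty(w)_j = g(y)_j = r(g)\, w_j$ for $j \in C$, while the $C^c$-entries of $f^C_0(v)$ and $f^C_\infty(w)$ are $0$ and $\infty$ by the definitions of $P^C_0$ and $P^C_\infty$, which match $r(g)\, v_j = 0$ and $r(g)\, w_j = \infty$. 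So $v$ and $w$ are eigenvectors of $f^C_0$ and $f^C_\infty$ respectively, each with support exactly $C$.

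It only remains to identify the common eigenvalue. Since $f^C_0(x)_i = 0$ for $i \in C^c$, the infimum in \eqref{cw} defining $r(f^C_0)$ only sees $\max_{i \in C} f^C_0(x)_i/x_i$, which reduces via $x \mapsto x|_C$ to $r(g)$. The dual computation (or, equivalently, passage through the reciprocal map $L$ of \eqref{L} using $\lambda(f^C_\infty) = r(L f^C_\infty L)^{-1}$) gives $\lambda(f^C_\infty) = \lambda(g)$. Combined with $r(g) = \lambda(g)$, this yields $r(f^C_0) = \lambda(f^C_\infty) = r(g)$, as claimed. The main technical care is in the boundary continuity step identifying $f^C_0(v)_j$ and $f^C_\infty(w)_j$ with $g(y)_j$ for $j \in C$; this is a routine application of Theorem \ref{thm:BS}.
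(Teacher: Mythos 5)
Your proposal is correct and follows essentially the same route as the paper: restrict to the final class $C$ using Lemma \ref{lem:convEq}, use strong connectivity of the restricted digraph to obtain an interior eigenvector, and then transfer it to $f^C_0$ and $f^C_\infty$, identifying the common eigenvalue with $r(f^C_0)=\lambda(f^C_\infty)$. The only cosmetic difference is that you invoke the Gaubert--Gunawardena condition (Theorem \ref{thm:GG}) for the explicitly restricted map $g$ on $\R^{|C|}_{>0}$, while the paper applies Theorem \ref{thm:graphCond} directly to $f^C_\infty$; your explicit verification that $r(f^C_0)=r(g)$ and $\lambda(f^C_\infty)=\lambda(g)$ is a welcome bit of added care, not a deviation.
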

\begin{proof}
The directed graph $\G(f^C_\infty)$ associated with $f^C_\infty$ on $\R^C_{>0}$ has nodes $C$ and contains all of the arcs between pairs $i, j \in C$ that are in $\G(f)$.  Since $C$ is a strongly connected component of $\G(f)$, we see that $\G(f^C_\infty)$ is strongly connected. Therefore $f^C_\infty$ has an eigenvector $v \in (0,\infty]^n$ with $\supp(v) = C$ and eigenvalue $\lambda(f^C_\infty)$ by Theorem \ref{thm:graphCond}. By Lemma \ref{lem:convEq}, $f^C_0(x)_j = f^C_\infty(x)_j$ for all $x \in \R^n_{>0}$ and $j \in J$. If we choose $x \in \R^n_{>0}$ such that $v = P^C_\infty x$, then $P^C_0 x$ is an eigenvector of $f^C_0$ with the same eigenvalue as $v$. This means that $r(f^C_0) =\lambda(f^C_\infty)$.  
\end{proof}

\begin{proof}[Proof of Theorem \ref{thm:basicClass}]
Let $C_1, \ldots, C_m$ denote the strongly connected components of $\G(f)$.
These components are the vertices of a directed acyclic graph with an arc from $C_i$ to $C_j$ whenever there is an arc in $\G(f)$ from a vertex in $C_i$ to a vertex in $C_j$.  Since the vertices of a directed acyclic graph have a topological ordering, we can assume that the strongly connected components of $\G(f)$ are ordered so that there are no paths from $C_i$ to $C_j$ in $\G(f)$ when $i < j$. We may also assume that the final classes are $C_1, \ldots, C_k$ where $k$ is the number of final classes in $\G(f)$.  

By Theorem \ref{thm:KR}, $f$ has an eigenvector $v \in \R^n_{\ge 0}$ with eigenvalue $r(f)$.  Let $j$ be the minimal index such that $C_j \cap \supp(v) \ne \varnothing$. Let $J = C_1 \cup \ldots \cup C_j$.   
By Lemma \ref{lem:convEq}, $f^J_0(v)_i = f(v)_i$ for all $i \in J$. Note that this equation is true even if some entries of $v$ are zero because both $f$ and $f^J_0$ extend continuously to the boundary of $\R^n_{\ge 0}$ by Theorem \ref{thm:BS}. %Extending the result of Lemma \ref{lem:convEq} to the boundary of $\R^n_{\ge 0}$ using Theorem \ref{thm:BS}, we have $f^J_0(v)_i = f(v)_i$ for all $i \in J$.
Therefore $P^J_0 v$ is an eigenvector of $f^J_0$ with eigenvalue $r(f)$.  Since $\supp(v) \subseteq C_j \cup \ldots \cup C_m$, $P^J_0v = P^{C_j}_0 v$ so $P^{C_j}_0 v$ is an eigenvector of $f^{C_j}_0$ with eigenvalue $r(f)$.  This implies that $r(f^{C_j}_0) \ge r(f)$.  Since $r(f^{C_i}_0) \le r(f)$ by Lemma \ref{lem:AsubB} for each $1 \le i \le m$, we see that $r(f) = \max_{1 \le i \le m} r(f^{C_i}_0)$. 

To prove the corresponding equation for $\lambda(f)$, note that $f$ also has an eigenvector $w \in (0,\infty]^n$ with eigenvalue $\lambda(f)$ by Corollary \ref{cor:formaleig}.  If there is a path from $j$ to $i$ in $\G(f)$, then $\lim_{t \rightarrow \infty} f^p(\exp(te_{\{i\}}))_j = \infty$ where $p$ is the length of the path.  If $w_i = \infty$, then there is a constant $\beta > 0$ such that $\exp(te_{\{i\}}) \le \beta w$ for all $t > 0$.  Therefore 
$$\beta \lambda(f)^p w_j = \beta f^p(w)_j \ge \lim_{t \rightarrow \infty} f^p(\exp(t e_{\{i\}}))_j = \infty $$ 
so $w_j = \infty$.  This means that if $j \in \supp(w)$ (so that $w_j < \infty$) and $i \notin \supp(w)$, then there cannot be a path from $j$ to $i$ in $\G(f)$.  It also means that if $j \in \supp(w)$, then any other vertex in the same strongly connected component with $j$ must also be in the support of $w$.  

Let $j$ be the smallest index such that $C_j \cap \supp(w) \ne \varnothing$.  By the above argument, there cannot be any arcs in $\G(f)$ from a vertex in $C_j$ to a vertex in $C_i$ when $i < j$.  Therefore $C_j$ is a final class of $\G(f)$ and $C_j \subseteq \supp(w)$.  

By Lemma \ref{lem:convEq}, $f_\infty^{C_j}(P^{C_j}_\infty w)_i = f(w)_i = \lambda(f) w_i$ for all $i \in C_j$.  Therefore $P^{C_j}_\infty w$ is an eigenvector of $f_\infty^{C_j}$ with eigenvalue $\lambda(f)$.  By Lemma \ref{lem:bdryEig}, $r(f^{C_j}_0) = \lambda(f^{C_j}_\infty) = \lambda(f)$.  Since $r(f^{C_i}_0) = \lambda(f^{C_i}_\infty) \ge \lambda(f)$ for all other final classes $C_i$ by Lemmas \ref{lem:AsubB} and \ref{lem:bdryEig}, it follows that $\lambda(f) = \min_{1 \le i \le k} r(f^{C_i}_0)$.  
\end{proof}

\begin{lemma} \label{lem:classCap}
Let $f: \R^n_{>0} \rightarrow \R^n_{>0}$ be order-preserving, homogeneous, and multiplicatively convex. 
%Let $J$ be the union of the final classes of $\G(f)$. Then $f$ is strongly nonnegative if and only if $r(f^{[n] \bs J}_0) < r(f)$ and $\lambda(f^J_{\infty}) = r(f)$. 
If $\G(f)$ has a unique final class $C$, then $f$ is strongly nonnegative if and only if $r(f^{[n] \bs C}_0) < r(f^C_0)$. 
\end{lemma}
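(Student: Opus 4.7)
The plan is to reduce strong nonnegativity, which is defined by a family of conditions indexed by strongly connected components, to the single inequality in the statement by repeated application of Theorem \ref{thm:basicClass}. Writing $\mathcal{C}$ for the set of strongly connected components of $\G(f)$ and splitting off the unique final class, $\mathcal{C} = \{C\} \cup \mathcal{C}_{\text{nf}}$, Theorem \ref{thm:basicClass} gives
\begin{equation*}
r(f) = \max\Bigl( r(f^C_0),\; \max_{C' \in \mathcal{C}_{\text{nf}}} r(f^{C'}_0) \Bigr),
\end{equation*}
and $f$ is strongly nonnegative precisely when $r(f^C_0) = r(f)$ and $r(f^{C'}_0) < r(f)$ for every $C' \in \mathcal{C}_{\text{nf}}$.

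The central step will be the identity
\begin{equation*}
r(f^{[n] \bs C}_0) = \max_{C' \in \mathcal{C}_{\text{nf}}} r(f^{C'}_0).
\end{equation*}
The inequality $\ge$ is immediate from Lemma \ref{lem:AsubB}, because every non-final class is contained in $[n] \bs C$. For the reverse inequality, I would view $g := f^{[n] \bs C}_0$ as an order-preserving, homogeneous, multiplicatively convex map on $\R^{[n] \bs C}_{>0}$ (these properties are inherited from $f$ via the projection) and apply Theorem \ref{thm:basicClass} to $g$, obtaining $r(g) = \max_D r(g^D_0)$ over the strongly connected components $D$ of $\G(g)$. Each such $D$ lies inside some strongly connected component of $\G(f)$, necessarily non-final since $D \subseteq [n] \bs C$, and the identity $P^D_0 P^{[n] \bs C}_0 = P^D_0$ gives $g^D_0 = f^D_0$; Lemma \ref{lem:AsubB} then bounds $r(f^D_0) \le r(f^{C'}_0)$ for the enveloping non-final class $C'$.

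Granted this identity, both directions are immediate. Strong nonnegativity forces the max over non-final classes to be $< r(f) = r(f^C_0)$, i.e., $r(f^{[n] \bs C}_0) < r(f^C_0)$. Conversely, $r(f^{[n] \bs C}_0) < r(f^C_0)$ together with the decomposition of $r(f)$ above forces $r(f) = r(f^C_0)$, and then every non-final $C'$ satisfies $r(f^{C'}_0) \le r(f^{[n] \bs C}_0) < r(f)$, confirming strong nonnegativity.

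The main obstacle is the central identity. The subtlety is that the strongly connected components of $\G(f^{[n] \bs C}_0)$ may be strictly finer than the non-final classes of $\G(f)$, because zeroing out the $C$-coordinates can kill arcs of $\G(f)$ between vertices of $[n] \bs C$. Rather than tracking how $\G(g)$ sits inside $\G(f)$, the plan above sidesteps this by using only the containment of each $D$ inside some non-final class of $\G(f)$, combined with the monotonicity of the Collatz-Wielandt number.
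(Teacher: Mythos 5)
Your proposal is correct and follows essentially the same route as the paper: the hard inequality is obtained exactly as there, by applying Theorem \ref{thm:basicClass} to $f^{[n]\bs C}_0$, noting via \eqref{monotonicity} that each strongly connected component of $\G(f^{[n]\bs C}_0)$ sits inside a (necessarily non-final) component of $\G(f)$, and invoking Lemma \ref{lem:AsubB}. Packaging the two directions through the single identity $r(f^{[n]\bs C}_0)=\max_{C'}r(f^{C'}_0)$ is only a cosmetic reorganization of the paper's argument.
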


\begin{proof}
Suppose that $r(f^{[n] \bs C}) < r(f^C_0)$. If $B$ is any non-final strongly connected component of $\G(f)$, then $B \subseteq [n] \bs C$.  Therefore $r(f^B_0) \le r(f^{[n] \bs C}_0) < r(f^C_0)$ by Lemma \ref{lem:AsubB} and $r(f^C_0) = r(f)$ by Theorem \ref{thm:basicClass}. Thus $f$ is strongly nonnegative.

Conversely, if $f$ is strongly nonnegative, then $r(f^B_0) < r(f) = r(f^C_0)$ for every strongly connected component $B$ of $\G(f)$ other than $C$.  The directed graph 
$\G(f^{[n] \bs C}_0)$ has nodes $[n] \bs C$ and an arc from $i$ to $j$ precisely when 
$$\lim_{t \rightarrow \infty} f^{[n] \bs C}_0(\exp(te_{\{j\}}))_i = \infty.$$
If there is an arc from $i$ to $j$ in $\G(f^{[n] \bs C}_0)$, then there is an arc from $i$ to $j$ in $\G(f)$ by \eqref{monotonicity}.  So for any $i \in [n] \bs C$, the strongly connected component $A[i]$ of $\G(f^{[n] \bs C}_0)$ which contains $i$ is a subset of the corresponding strongly connected component $B[i]$ of $\G(f)$ that contains $i$.  Thus $r(f^{A[i]}_0) \le r(f^{B[i]}_0)$ for all $i \in [n] \bs C$ by Lemma \ref{lem:AsubB}. Then, by Theorem \ref{thm:basicClass}, 
$$r(f^{[n] \bs C}_0) = \max_{i \in [n] \bs C} r(f^{A[i]}_0) \le \max_{i \in [n] \bs C} r(f^{B[i]}_0) < r(f^C_0).$$
\end{proof}

\begin{lemma} \label{lem:samederiv}
Let $f: \R^n_{>0} \rightarrow \R^n_{>0}$ be order-preserving, homogeneous, real analytic, and multiplicatively convex. Then $\G(f) = \G(f'(x))$ for every $x \in \R^n_{>0}$.   
\end{lemma}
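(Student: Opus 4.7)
The plan is to reduce both $(i,j)\in\G(f)$ and $(i,j)\in\G(f'(x))$ to the same one-parameter dichotomy coming from Lemma~\ref{lem:convConst}, applied to the restriction of $f$ along a line in log-coordinates. Fix $x\in\R^n_{>0}$ and $i,j\in[n]$. Since $f$ is order-preserving, the Jacobian $f'(x)$ has nonnegative entries, so $(i,j)\in\G(f'(x))$ precisely when $\partial f_i/\partial x_j(x)>0$; by definition, $(i,j)\in\G(f)$ precisely when $\lim_{t\to\infty}f(\exp(te_{\{j\}}))_i=\infty$.

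To connect these, I would introduce
\[
p(s):=f\bigl(x\cdot\exp(se_{\{j\}})\bigr)_i,\qquad s\in\R,
\]
which is positive, real analytic, and nondecreasing in $s$. Since the vector-valued map $\log\circ f\circ\exp$ is convex entrywise (by multiplicative convexity of $f$), its restriction $s\mapsto\log p(s)$ to the affine line $\log x+s\,e_{\{j\}}$ is convex. Hence the substitution $u=e^s$ turns $p$ into the order-preserving, multiplicatively convex, real analytic function $r(u):=p(\log u)$ from $(0,\infty)$ to $(0,\infty)$, and Lemma~\ref{lem:convConst} yields the dichotomy: either $p$ is constant, or $p'(s)>0$ for every $s\in\R$ and in this latter case $\lim_{s\to\infty}p(s)=\infty$. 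Computing $p'(0)=x_j\,\partial f_i/\partial x_j(x)$ and using $x_j>0$, one obtains
\[
\partial f_i/\partial x_j(x)>0\;\Longleftrightarrow\;\lim_{s\to\infty}f\bigl(x\cdot\exp(se_{\{j\}})\bigr)_i=\infty.
\]

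Finally, I would note that the right-hand limit is independent of the basepoint: choosing $\alpha,\beta>0$ with $\alpha\one\le x\le\beta\one$ and combining order-preservation with homogeneity gives
\[
\alpha\,f\bigl(\exp(se_{\{j\}})\bigr)_i\;\le\;f\bigl(x\cdot\exp(se_{\{j\}})\bigr)_i\;\le\;\beta\,f\bigl(\exp(se_{\{j\}})\bigr)_i,
\]
so the limit is infinite for $x$ iff it is infinite for $\one$, i.e.\ iff $(i,j)\in\G(f)$. Chaining the equivalences yields $(i,j)\in\G(f'(x))\iff(i,j)\in\G(f)$ for every $x\in\R^n_{>0}$. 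The only genuinely nontrivial step is the passage from "$p$ nonconstant" to "$p'>0$ everywhere," which is precisely where real analyticity of $f$ enters through the second half of Lemma~\ref{lem:convConst}; without it, one could only conclude that $p'>0$ for sufficiently large $s$, and the conclusion of the lemma would fail. The remaining steps are routine checks of convexity of a restriction, a one-line Jacobian computation, and a Hilbert-metric style sandwich argument.
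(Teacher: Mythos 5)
Your proposal is correct and follows essentially the same route as the paper's proof: restrict $f_i$ along the curve $s\mapsto x\exp(se_{\{j\}})$, compute $p'(0)=x_j f'(x)_{ij}$ by the chain rule, invoke Lemma~\ref{lem:convConst} for the constant-versus-unbounded dichotomy (with real analyticity used exactly where you say), and then transfer the limit condition from basepoint $x$ to $\one$ — your explicit order-preserving/homogeneous sandwich is just an unpacked version of the paper's appeal to $d_H$-nonexpansiveness. No gaps; the extra detail on why the one-variable restriction is multiplicatively convex is a correct elaboration of what the paper leaves implicit.
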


\begin{proof}
By the chain rule, 
$$\left. \frac{\partial}{\partial t} f(x \exp( t e_{\{j\}}))_i \right|_{t = 0} = x_j f'(x)_{ij}$$
for any $x \in \R^n_{>0}$. 
Therefore, $\lim_{t \rightarrow \infty} f(x \exp(te_{\{j\}})_i = \infty$ if and only if $f'(x)_{ij} > 0$ by Lemma \ref{lem:convConst}.  Since $f$ is $d_H$-nonexpansive, $\lim_{t \rightarrow \infty} f(x \exp(te_{\{j\}}))_i = \infty$ if and only if $\lim_{t \rightarrow \infty} f(\exp(te_{\{j\}}))_i = \infty$ which is equivalent to $(i,j) \in \G(f)$.  Therefore $\G(f) = \G(f'(x))$.
\end{proof}

\begin{proof}[Proof of Theorem \ref{thm:convMain} \ref{item:convex}] Suppose that $f$ is strongly nonnegative and $\G(f)$ has a unique final class $C$. By Lemma \ref{lem:classCap}, $r(f^{[n] \bs C}_0) < r(f^C_0) = r(f)$.  Then $E(f)$ is nonempty and bounded in $(\R^n_{>0},d_H)$ by Theorem \ref{thm:graphCond}.  

Conversely, suppose that $E(f)$ is nonempty and $d_H$-bounded. Let $C$ be a final class of $\G(f)$. Then $r(f^C_0) = \lambda(f^C_\infty)$ by Lemma \ref{lem:bdryEig}. Since $E(f)$ is nonempty, we also know that $\lambda(f) = r(f)$ by Lemma \ref{lem:mindisp}. Using Lemma \ref{lem:AsubB}, we have 
 $$\lambda(f) \le  \lambda(f^C_\infty) = r(f^C_0) \le r(f)$$
which implies that $r(f^C_0) = r(f)$ so $C$ is a basic class.  If $\G(f)$ had another final class $B \subset [n]$ different from $C$, then the same argument would show that $r(f^B_0) = r(f) = r(f^C_0)$.  But $B \subseteq [n] \bs C$, so 
$$r(f^B_0) \le r(f^{[n] \bs C}_0) < \lambda(f^C_\infty) = r(f^C_0)$$
by Theorem \ref{thm:super} and Lemma \ref{lem:AsubB}.  So we conclude that $C$ must be the only final class of $\G(f)$. Since $C$ is the unique final class and $r(f^{[n] \bs C}_0) < r(f^C_0)$, Lemma \ref{lem:classCap} implies that $f$ is strongly nonnegative.    
\end{proof}

\begin{proof}[Proof of Theorem \ref{thm:convMain} \ref{item:strongNonneg}] 
Let $J$ be the union of the final classes of $\G(f)$. Since $f$ is strongly nonnegative, each final class $C$ is basic.  By Lemma \ref{lem:bdryEig}, there is an eigenvector in $\R^C_{>0}$ with eigenvalue $r(f)$. Let $u \in \R^n_{\ge 0}$ be the sum of one such eigenvector from each final class.  Observe that $\supp(u) = J$ and $u$ is an eigenvector of $f$ with eigenvalue $r(f)$ by Lemma \ref{lem:convEq}.   

We can assume without loss of generality that $[n] \bs J = \{1, \ldots, m\}$ and $J = \{m+1, \ldots, n\}$ for some $m < n$.  Let $K = \{x+tu : x \in \R^{[n] \bs J}_{\ge 0} \text{ and } t\ge 0\}$. Then $K$ is a closed cone contained in $\R^n_{\ge 0}$ and $f$ leaves $K$ invariant by Lemma \ref{lem:convEq}.  We can identify $K$ with $\R^{m+1}_{\ge 0}$ via the invertible map $V: \R^{m+1}_{\ge 0} \rightarrow K$ defined by 
$$V(x)_i := \begin{cases}
x_i & \text{ when } i \le m \\
x_{m+1} u_i & \text{ when } i > m. 
\end{cases}$$
Note that $V$ is linear and order-preserving. Let $g = V^{-1} \circ f \circ V$.  Then $g:\R^{m+1}_{>0} \rightarrow \R^{m+1}_{>0}$ is order-preserving, homogeneous, and multiplicatively convex.  The digraph $\G(g)$ has a unique final class $B = \{m+1\}$. The other strongly connected components of $\G(g)$ have vertices in $[n] \bs J$ and are exactly the same as the corresponding strongly connected components of $\G(f)$. Since $r(g^B_0) = r(f^J_0) = r(f)$ and $r(g^A_0) = r(f^A_0) < r(f)$ for any other strongly connected component $A$ of $\G(g)$, it follows that $g$ is strongly nonnegative.  Therefore, Theorem \ref{thm:convMain} \ref{item:convex} implies that $g$ has an eigenvector $w \in \R^{m+1}_{>0}$. Then $V(w) \in \R^{n}_{>0}$ is an eigenvector of $f$. 
\end{proof} 

\begin{proof}[Proof of Theorem \ref{thm:convMain} \ref{item:analyticConverse}] Suppose that $f$ is real analytic and $u \in \R^n_{>0}$ is an eigenvector of $f$. We may also assume that $r(f) = 1$ by scaling $f$.  Thus, $f(u) = u$. Since $f$ is order-preserving, $A := f'(u)$ is a nonnegative matrix; and since $f$ is homogeneous, $u$ is an eigenvector of $A$ with eigenvalue $r(f) = 1$.  By Lemma \ref{lem:samederiv}, $\G(A) = \G(f)$.  So $\G(f)$ and $\G(A)$ have the same strongly connected components and the same final classes. 

Let $J$ be the union of the final classes of $\G(f)$ and let $I = [n] \bs J$. We can assume without loss of generality that $I = \{1, \ldots, m\}$ and $J = \{m+1, \ldots, n\}$ for some $m < n$. Then we can write $A$ and $u$ in block form: 
$$A = \left[ \begin{array}{c|c}
A_{I} & * \\ \hline
0 & A_{J}
\end{array} \right] ~~~~ \text{ and } ~~~~ u = \left[ \begin{array}{c}
u_{I} \\ 
u_{J}
\end{array} \right].$$

Observe that $A_J u_J = u_J$. Let $x = \begin{bmatrix} 0 \\ u_J \end{bmatrix}$. Then $Ax \ge x$.
For every $i \in I$, there is a path in $\G(A)$ from $i$ to some $j \in J$. Therefore, there is some $p \in \N$ such that $A^p x \gg 0$.  Then, since $x_j > 0$ for all $j \in J$, it follows that $A^p e_J \gg 0$ as well.   This means that $A^p (u-\epsilon e_J) \ll u$ for all $\epsilon > 0$.  Note that $A^p$ is the derivative of $f^p$ at $u$ by the chain rule for Fr\'{e}chet derivatives.  So we have $f^p(u-\epsilon e_J) \ll u$ when $\epsilon > 0$ is sufficiently small.  Then 
$$(f^{[n] \bs J}_0)^p(u) = (f^{[n] \bs J}_0)^p( u - \epsilon e_J ) \le f^p( u - \epsilon e_J ) \ll u.$$
Therefore there is a positive constant $\beta < 1$ such that $(f^{[n] \bs J}_0)^p(u) \le \beta u$.  By \eqref{rlim}, this means that $r(f^{[n] \bs J}_0) < 1$.  

Let $C$ be a strongly connected component of $\G(f)$. If $C$ is not final, then $C \subseteq [n] \bs J$, so Lemma \ref{lem:AsubB} shows that $r(f^{C}_0) \le r(f^{[n] \bs J}_0) < 1 = r(f)$.  If $C$ is final, then $P^{C}_0 u$ is an eigenvector of $f^{C}_0$ with eigenvalue 1 by Lemma \ref{lem:convEq}.  Therefore the final classes are exactly the basic classes of $\G(f)$.  
\end{proof}

%\begin{remark}
%Note that Theorem \ref{thm:convMain}\ref{item:convex} and Lemma \ref{lem:classCap} imply that the converse of Theorem \ref{thm:graphCond} is true for all order-preserving, homogeneous, and multiplicatively convex functions on $\R^n_{>0}$.  
%\end{remark}

\color{black}
\section{Uniqueness of eigenvectors} \label{sec:unique}

Suppose that $f:\R^n_{>0} \rightarrow \R^n_{>0}$ is order-preserving and linear. If $f$ has two linearly independent eigenvectors $x, y \in \R^n_{>0}$, then every vector in the linear span of $\{x,y\}$ is also an eigenvector corresponding to the spectral radius. Since $\spn \{x,y\} \cap \R^n_{>0}$ is not bounded in Hilbert's projective metric, we see that the set of eigenvectors of $f$ in $\R^n_{>0}$ is nonempty and $d_H$-bounded if and only if $f$ has a unique eigenvector in $\R^n_{>0}$ up to scaling.  Of course, this is a well known part of linear Perron-Frobenius theory. 
Strikingly, it turns out that the exact same statement is true if $f:\R^n_{>0} \rightarrow \R^n_{>0}$ is order-preserving, homogeneous, and real analytic.

Let $U$ be an open subset of $\R^m$. A function $f: U \rightarrow \R^n$ is \emph{real analytic} if for every
$x \in U$, the function $f$ has a convergent power series in a neighborhood of $x$.

\begin{theorem} \label{thm:uniqueEig}
Let $f:\R^n_{>0} \rightarrow \R^n_{>0}$ be order-preserving, homogeneous, and real analytic. The eigenspace $E(f)$ is nonempty and bounded in $(\R^n_{>0}, d_H)$ if and only if $f$ has a unique eigenvector in $\R^n_{>0}$ up to scaling. 
\end{theorem}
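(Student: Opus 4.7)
The ``if'' direction is immediate: if $f$ has a unique positive eigenvector $u$ up to scaling, then $E(f) = \R_{>0} u$, and since any two points of $E(f)$ are positive scalar multiples of each other, $d_H$ vanishes between them, so $E(f)$ has $d_H$-diameter zero and is trivially bounded.

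For the converse, assume $E(f) \neq \varnothing$ is $d_H$-bounded. All positive eigenvectors of $f$ share a common eigenvalue $\mu$ by Lemma \ref{lem:mindisp}, so after replacing $f$ by $f/\mu$ we may take $\mu = 1$, and positive eigenvectors coincide with positive fixed points of $f$. Set $\hat f(y) := \log f(e^y)$, which is real analytic on $\R^n$ since $f$ is real analytic on $\R^n_{>0}$; order-preservation and degree-one homogeneity of $f$ translate into topicality of $\hat f$, meaning $\hat f(y + c\mathbf{1}) = \hat f(y) + c\mathbf{1}$ for every $c \in \R$. Topical maps are nonexpansive in the Hilbert seminorm $\|y\|_H := \max_i y_i - \min_i y_i$, so $\hat f$ descends to a real analytic nonexpansive self-map $\bar f$ of the finite-dimensional Banach space $X := \R^n / \R\mathbf{1}$ (normed by $\|\cdot\|_H$), and the fixed-point set $\mathrm{Fix}(\bar f)$ is the image of $E(f)$ under $\log$ and quotient, hence bounded. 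The theorem thus reduces to the following \emph{key lemma}: every real analytic nonexpansive self-map of a finite-dimensional Banach space whose fixed-point set is nonempty and bounded has exactly one fixed point.

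To prove the key lemma I argue by contradiction. Suppose $\bar f$ has two distinct fixed points $p \ne q$, and consider the real analytic function $h : \R \to X$, $h(t) := \bar f(p + t(q-p)) - (p + t(q-p))$, which satisfies $h(0) = h(1) = 0$. If $h$ can be forced to vanish on any subinterval containing at least two points, the identity principle for real analytic functions implies $h \equiv 0$ on $\R$, placing the entire line $\{p + t(q-p) : t \in \R\}$ in $\mathrm{Fix}(\bar f)$ and contradicting boundedness. Nonexpansivity alone only constrains $\bar f(p + t(q-p))$ to the \emph{metric lens} $\{z \in X : \|z-p\|_H \le t\|q-p\|_H,\ \|z-q\|_H \le (1-t)\|q-p\|_H\}$; since $\|\cdot\|_H$ is not strictly convex this lens is generically higher-dimensional rather than a straight segment, so additional input is required.

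That input comes from Cesaro averaging. The averages $C_k := \frac{1}{k}\sum_{i=0}^{k-1}\bar f^i$ are nonexpansive, and because $\bar f$ has a fixed point each orbit $\{\bar f^i(x)\}$ is bounded, so the family $\{C_k\}$ is equicontinuous and pointwise bounded. By Arzela--Ascoli a subsequence converges uniformly on compact sets to a nonexpansive map $R : X \to X$, and the identity $\bar f \circ C_k - C_k = \frac{1}{k}(\bar f^k - \id)$ forces $\bar f \circ R = R$, so the image of $R$ lies in $\mathrm{Fix}(\bar f)$ while $R$ fixes $\mathrm{Fix}(\bar f)$ pointwise. Then $t \mapsto R(p + t(q-p))$ is a continuous arc from $p$ to $q$ inside $\mathrm{Fix}(\bar f)$, so $\mathrm{Fix}(\bar f)$ contains a non-degenerate continuum, and since $\mathrm{Fix}(\bar f) = (\bar f - \id)^{-1}(0)$ is a real analytic subvariety of $X$, this continuum forces a positive-dimensional irreducible analytic component through $p$. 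The main obstacle, which I expect to be the technical heart of the proof, is to show such a component cannot be bounded: the plan is to use that nonexpansivity makes $A := \bar f'(p)$ a linear contraction with semisimple peripheral spectrum, so the tangent space of the fixed component at $p$ lies in $\ker(A - I)$, and then to use real analytic continuation along a $1$-eigenvector of $A$ to extend the local analytic parameterization until it escapes every bounded ball.
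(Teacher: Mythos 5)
Your reduction is fine and matches the paper's: the ``if'' direction is trivial, and the converse correctly boils down to a statement about real analytic nonexpansive maps with bounded fixed-point sets (the paper works on the slice $\{x_n=1\}$ with the variation norm rather than the quotient $\R^n/\R\mathbf{1}$, an immaterial difference; this is the content of Theorem \ref{thm:uniqueFixed}). The problems are in your proof of the key lemma. First, the Ces\`aro step is wrong as stated: the identity $\bar{f}\circ C_k - C_k = \tfrac{1}{k}(\bar{f}^k-\id)$ uses linearity of $\bar{f}$, since $\bar{f}$ applied to the average $\tfrac1k\sum_{i=0}^{k-1}\bar{f}^i(x)$ is not $\tfrac1k\sum_{i=1}^{k}\bar{f}^i(x)$ for a nonlinear map. (The conclusion you want --- a nonexpansive retraction onto the fixed-point set in finite dimensions --- is true by Bruck's theorem, but not by this argument.) Second, and more seriously, the actual heart of the lemma is exactly the step you leave as a ``plan'': ruling out a \emph{bounded} positive-dimensional analytic component of fixed points. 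Nothing in your sketch does this. A real analytic subvariety of positive dimension can perfectly well be bounded (a circle, for instance), and ``analytic continuation along a $1$-eigenvector of $\bar{f}'(p)$'' gives no mechanism forcing the fixed-point component to leave every ball: the local analytic arc of fixed points can curve, and the linearization at $p$ controls only the tangent direction, not the global behavior. So the contradiction with boundedness is never actually derived.

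For comparison, the paper's proof of Theorem \ref{thm:uniqueFixed} uses analyticity in a much more limited and robust way: with $f(0)=0$ and a second fixed point $w$, nonexpansiveness forces the one-variable analytic function $t\mapsto \inner{f(tv),v^*}-t$ (for $v=w/\|w\|$ and any $v^*$ in the duality set $J(v)$) to vanish on $[0,\|w\|]$, hence on all of $\R$ by the identity theorem. This single identity makes every horoball $H_\alpha=\{x: \inf_{v^*\in J(v)}\inner{x,v^*}\ge\alpha\}$ invariant under $f$; intersecting with large balls $B_R$ (also invariant since $f(0)=0$) gives nonempty closed bounded convex invariant sets, which contain fixed points by the fixed point property, and these fixed points have norm at least $\alpha$, so the fixed-point set is unbounded. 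That global argument, using Lemma \ref{lem:subgradient} and the fixed point property, is precisely what substitutes for the missing step in your proposal; without something of that nature your proof does not go through.
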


In order to prove Theorem \ref{thm:uniqueEig}, we will prove a general result about the uniqueness of fixed points of nonexpansive maps on a real Banach space $X$. To do this, we need the following simple observation about Banach space norms. 

In what follows $X^*$ denotes the dual space of $X$ and $\|\cdot\|_*$ is the dual norm. For $x^* \in X^*$, we write $\inner{x,x^*}$ to represent $x^*(x)$.  The set valued \emph{duality map} $J: X \rightrightarrows X^*$ is given by 
$$J(x) := \{x^* \in X^* : \|x^*\|_* = \|x \| \text{ and } \inner{x,x^*} = \|x\| \|x^*\|_* \}.$$
The following is \cite[Lemma 3.3]{Lins21}. This observation is certainly not original to \cite{Lins21}, but we don't know of another reference.
\begin{lemma} \label{lem:subgradient}
Let $X$ be a real Banach space and let $x, y \in X$ with $\|x\|=1$.  Then there exists $x^* \in J(x)$ such that 
\begin{equation} \label{eq:subgradient}
\lim_{\mylambda \rightarrow \infty} \|\mylambda x - y\| - \inner{\mylambda x - y, x^*} = 0.
\end{equation}
\end{lemma}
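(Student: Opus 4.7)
The plan is to apply the Hahn--Banach theorem to a sublinear functional $p$ that encodes the asymptotic behavior of the norm along the ray $\{tx + v : t \ge 0\}$. Specifically, I would define $p : X \to \R$ by
$$ p(v) := \lim_{t \to \infty} \bigl( \|tx + v\| - t \bigr). $$

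The first step is to check that $p$ is well-defined and sublinear. For fixed $v$, the map $h_v(t) := \|tx + v\|$ is convex in $t$ and satisfies $|h_v(t) - t| \le \|v\|$ by the triangle inequality, so $h_v(t)/t \to 1$. For any $s$, the slopes $(h_v(u) - h_v(s))/(u-s)$ are nondecreasing in $u$ by convexity, and their limit as $u \to \infty$ equals $1$; hence $(h_v(t) - h_v(s))/(t-s) \le 1$ for all $s < t$, which means $h_v(t) - t$ is nonincreasing. Combined with the lower bound $-\|v\|$, this forces convergence, with $p(v) \in [-\|v\|, \|v\|]$. Sub-additivity follows from the splitting $tx + v + w = (\tfrac{t}{2}x + v) + (\tfrac{t}{2}x + w)$ and the triangle inequality, while positive homogeneity follows from the substitution $s = t/\alpha$. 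Direct evaluation also yields $p(\pm x) = \pm 1$.

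Next I would define $\ell_0$ on the one-dimensional subspace $\R y$ by $\ell_0(\alpha y) := -\alpha\, p(-y)$ and verify $\ell_0 \le p$ there. For $\alpha \le 0$, the two sides coincide by positive homogeneity; for $\alpha > 0$, the required inequality reduces to $p(y) + p(-y) \ge p(0) = 0$, which holds by sub-additivity. Hahn--Banach then extends $\ell_0$ to a linear $x^* : X \to \R$ with $x^* \le p$ everywhere. The estimates $|x^*(v)| \le \max(p(v), p(-v)) \le \|v\|$ give $\|x^*\|_* \le 1$, while $x^*(x) \le p(x) = 1$ and $-x^*(x) \le p(-x) = -1$ together force $x^*(x) = 1$. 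Hence $\|x^*\|_* = 1$ and $x^* \in J(x)$.

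By construction, $x^*(y) = \ell_0(y) = -p(-y)$, so
$$\|tx - y\| - \inner{tx - y, x^*} = \bigl( \|tx - y\| - t \bigr) - p(-y) \longrightarrow p(-y) - p(-y) = 0$$
as $t \to \infty$, which is exactly \eqref{eq:subgradient}. The main obstacle is identifying the right sublinear functional $p$; once that is in place, the monotonicity argument that makes $p$ well-defined and the Hahn--Banach step both proceed routinely.
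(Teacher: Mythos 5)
Your proof is correct. Note that the paper itself does not reproduce an argument for this lemma; it simply cites \cite[Lemma 3.3]{Lins21}, so there is no in-paper proof to match against. Your route is a clean, self-contained one: the key object $p(v)=\lim_{t\to\infty}\bigl(\|tx+v\|-t\bigr)$ is well defined exactly as you argue (convexity of $t\mapsto\|tx+v\|$ makes $\|tx+v\|-t$ nonincreasing and it is bounded below by $-\|v\|$), it is sublinear with $p(\pm x)=\pm 1$, and Hahn--Banach applied to $\ell_0(\alpha y)=-\alpha p(-y)\le p$ on $\R y$ produces $x^*\le p$ with $x^*(x)=1$, $\|x^*\|_*=1$, and $x^*(y)=-p(-y)$, which is precisely what \eqref{eq:subgradient} requires since $\|tx-y\|-t\to p(-y)$. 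Two remarks on how this compares with the standard treatment: your $p$ is exactly the horofunction of the ray $t\mapsto tx$, so your construction fits the paper's remark after Theorem \ref{thm:uniqueFixed} about horofunctions and horoballs; and the more common way to get such an $x^*$ is to take norming functionals of $tx-y$ and pass to a weak-$*$ cluster point via Banach--Alaoglu, whereas your argument replaces that compactness step by an explicit Hahn--Banach extension, which is arguably more elementary and gives the identity $x^*(y)=-p(-y)$ directly rather than through a limit of functionals.
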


A real Banach space $X$ has the \emph{fixed point property} if for every closed, bounded, convex subset $C \subset X$ and any nonexpansive map $f:C \rightarrow C$, $f$ has a fixed point in $C$.  All finite dimensional Banach spaces have the fixed point property, as do many infinite dimensional spaces.  

For functions on a real Banach space $X$, we will use a weak definition of real analytic which is sufficient for our purposes.  A map $f: X \rightarrow X$ is \emph{real analytic} if $t \mapsto \inner{f(x+ty),z^*}$ is a real analytic function for all $x,y \in X$, $z^* \in X^*$, and $t \in \R$. This weaker definition is equivalent to the more standard definition in terms of power series when $f$ is continuous and has Gateaux differentials of all orders \cite[Lemma 7.1 and Theorem 7.5]{BoSi71}. 

\begin{theorem} \label{thm:uniqueFixed}
Let $X$ be a real Banach space with the fixed point property.  Let $f:X \rightarrow X$ be nonexpansive and real analytic.  If $f$ has more than one fixed point, then the set of fixed points of $f$ is unbounded.\end{theorem}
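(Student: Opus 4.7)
The plan is to argue by contradiction: assume $f$ has two distinct fixed points $p \neq q$ with $\Fix(f)$ bounded, and derive the contradiction by producing an entire affine line inside $\Fix(f)$.

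Set $v = q - p$ and consider the curve $g(t) := f(p + tv)$ for $t \in \R$. By nonexpansivity, $\|g(t)-p\| \le t\|v\|$ and $\|g(t)-q\| \le (1-t)\|v\|$; combined with the triangle inequality $\|v\| = \|q-p\| \le \|g(t)-p\| + \|g(t)-q\|$, for $t \in [0,1]$ both bounds must be equalities. Hence the restriction of $g$ to $[0,1]$ is a metric geodesic from $p$ to $q$. Now fix any supporting functional $v^* \in J(v)$, so that $\|v^*\|_* = \|v\|$ and $\inner{v, v^*} = \|v\|^2$. Decomposing
\[ \|v\|^2 = \inner{v, v^*} = \inner{g(t) - p, v^*} + \inner{q - g(t), v^*}, \]
each summand is bounded by the corresponding norm; since the sum of those bounds equals $\|v\|^2$, both inequalities are tight. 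This yields $\inner{g(t) - (p+tv), v^*} = 0$ for every $t \in [0,1]$. Because $t \mapsto \inner{g(t) - (p+tv), v^*}$ is real analytic in $t$ by the weak-analyticity hypothesis, and vanishes on $[0,1]$, it vanishes on all of $\R$. Thus the remainder $h(t) := g(t) - (p+tv)$ lies in the annihilator $(J(v))^\perp$ for every $t \in \R$.

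The next step is to upgrade this subgradient identity to the pointwise equality $g(t) = p + tv$, which is where the fixed point property enters. For each $t \in (0,1)$ the closed bounded convex set $K_t := \overline{B}(p, t\|v\|) \cap \overline{B}(q, (1-t)\|v\|)$ is $f$-invariant (both balls are $f$-invariant), so FPP yields a fixed point $r_t \in K_t$; by the computation above, any such $r_t$ must satisfy $\|r_t - p\| = t\|v\|$, $\|r_t - q\| = (1-t)\|v\|$, and the same subgradient identity. Combining this with Lemma \ref{lem:subgradient} applied to $x = v/\|v\|$ and $y = h(t)$ (producing a functional that asymptotically detects $h(t)$ along the direction $v/\|v\|$), and iterating the FPP construction on suitable nested $f$-invariant sub-convex subsets of $K_t$, one pins down $g(t) = p + tv$ on an open set. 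Real analyticity then propagates this identity to all of $\R$, giving the unbounded subset $\{p + tv : t \in \R\} \subseteq \Fix(f)$, the desired contradiction.

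The main obstacle is this last step: in a general Banach space the annihilator $(J(v))^\perp$ can be a nontrivial closed subspace (for example in $\R^2$ with the $\ell^\infty$ norm it is an entire coordinate line), so the subgradient identity alone is insufficient to pin down $g(t)$. The fixed point property is precisely what rules out the extra flexibility, but constructing the right auxiliary invariant sets and carrying out the bootstrap that uniquely determines $g(t)$ is the delicate technical heart of the proof.
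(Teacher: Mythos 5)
Your first half is fine and matches the spirit of the paper's identity \eqref{tv}: tightness of the nonexpansiveness and duality estimates along the chord gives $\inner{g(t)-(p+tv),v^*}=0$ for all $v^*\in J(v)$ on $[0,1]$, and real analyticity extends it to all $t\in\R$. But the second half is a genuine gap, and not merely a technical one you could fill as sketched. Your target claim --- that one can ``pin down'' $g(t)=p+tv$, hence that the affine line through two fixed points consists of fixed points --- is not reachable by the tools you invoke, because nothing in your bootstrap (invariance of $K_t$, FPP inside $K_t$, Lemma \ref{lem:subgradient}, analytic continuation) uses the boundedness of the fixed point set; if that mechanism worked it would prove the segment between any two fixed points is pointwise fixed, which is false. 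Concretely, take $X=(\R^2,\|\cdot\|_\infty)$ and $f(x,y)=(x,\sqrt{1+x^2})$: this is nonexpansive and real analytic, its fixed point set is the hyperbola $\{(x,\sqrt{1+x^2})\}$ (unbounded, consistent with the theorem), and for the fixed points $p=(0,1)$, $q=(1,\sqrt{2})$ one has $J(v)=\{(1,0)\}$, your subgradient identity holds for all $t$, the set $K_{1/2}$ contains the fixed point $(1/2,\sqrt{5}/2)$, yet $g(1/2)=(1/2,\sqrt{5}/2)\neq p+\tfrac12 v=(1/2,(1+\sqrt2)/2)$. So the FPP construction inside $K_t$ does not single out $p+tv$, and no affine line of fixed points exists; the point you yourself flag as ``the delicate technical heart'' is exactly the part that cannot be carried out this way.

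The paper's proof diverges precisely at this point: instead of trying to fix the chord, it uses the analytic identity along the ray to show a global monotonicity statement off the line. After normalizing so that $0$ and $w$ are fixed and $v=w/\|w\|$, the identity $\inner{f(tv),v^*}=t$ for all $t$ and all $v^*\in J(v)$ is combined with Lemma \ref{lem:subgradient} (applied to an arbitrary $x\in X$, not to a point on the line) to show that the horofunction $h(x)=\inf_{v^*\in J(v)}\inner{x,v^*}$ satisfies $h(f(x))\ge h(x)$ for every $x$. Hence each horoball $H_\alpha=\{h\ge\alpha\}$ is $f$-invariant, and $H_\alpha\cap B_R$ is a nonempty closed bounded convex invariant set for $R\ge\alpha$; the fixed point property then produces a fixed point of norm at least $\alpha$ for every $\alpha>0$, which is the unboundedness conclusion --- no claim is ever made about where these fixed points sit relative to the line. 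If you want to salvage your write-up, replace the ``upgrade to $g(t)=p+tv$'' step by this horoball argument: your analytic identity is exactly the input it needs.
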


\begin{proof}
We can assume without loss of generality that $f(0) = 0$. Otherwise, if $x_0$ is any fixed point of $f$, then replace $f$ by the function $x \mapsto f(x+x_0)-x_0$.  Now, suppose that $f$ also has a nonzero fixed point $w$.  Let $v = w/\|w\|$.  

For any $v^* \in J(v)$ and $0 < t < \|w\|$, observe that
$$\inner{f(tv),v^*} \le \|f(tv) - 0\| \le \|tv - 0\| = t$$
by nonexpansiveness.  Also 
\begin{align*}
\|w\|-\inner{f(tv),v^*} &= \inner{w-f(tv),v^*} \\
&\le \|w - f(tv)\| & (\text{since } \|v^*\|_* =1) \\
&\le \|w - tv\| & (\text{nonexpansiveness}) \\
&= \|w\| - t.
\end{align*}
Combining the two inequalities for $\inner{f(tv),v^*}$, we see that $\inner{f(tv),v^*} = t$ for all $0 < t < \|w\|$. Since $t \mapsto \inner{f(tv),v^*}-t$ is a real analytic function that is identically zero on the interval $0 \le t \le \|w\|$, it follows that 
\begin{equation} \label{tv}
\inner{f(tv),v^*} = t
\end{equation}
for all $t \in \R$ and $v^* \in J(v)$. 

Let $h(x) = \inf_{v^* \in J(v)} \inner{x,v^*}$ and 
let $H_\alpha = \{x \in X : h(x) \ge \alpha\}$ for all $\alpha \in \R$. Each $H_\alpha$ is closed and convex since it is the intersection of the closed half-spaces $\{x \in X : \inner{x,v^*} \ge \alpha\}$ where $v^* \in J(v)$. The sets $H_\alpha$ are also nonempty since they contain $tv$ for all $t \ge \alpha$. 

Choose any $x \in X$. Then 
\begin{align*}
\sup_{v^* \in J(v)} \inner{tv- f(x),v^*} &=  \sup_{v^* \in J(v)} \inner{f(tv)- f(x),v^*} & (\text{by } \eqref{tv}) \\ 
&\le \|f(tv) - f(x) \|  & (\text{since } \|v^*\|_* = 1) \\
& \le \|tv-x\|. & (\text{nonexpansiveness})
\end{align*}
By Lemma \ref{lem:subgradient}, there is an $x^* \in J(v)$ such that for every $\epsilon > 0$, 
$$\|tv-x\|  \le \inner{tv-x,x^*} + \epsilon$$
for all $t >0$ sufficiently large. Therefore   
$$\sup_{v^* \in J(v)} \inner{tv- f(x),v^*} \le \inner{tv - x,x^*} + \epsilon \le \sup_{v^* \in J(v)} \inner{tv-x,v^*} + \epsilon$$
when $t$ is large.  Equivalently, 
$$\inf_{v^* \in J(v)} \inner{x,v^*} \le \inf_{v^* \in J(v)} \inner{f(x),v^*} + \epsilon.$$
Since $\epsilon$ was arbitrary, we see that 
$$h(x) = \inf_{v^* \in J(v)} \inner{x,v^*} \le \inf_{v^* \in J(v)} \inner{f(x),v^*} = h(f(x)).$$
It follows that each $H_\alpha$ is invariant under $f$.
Since $f$ is nonexpansive and $f(0)=0$, the closed ball $B_R := \{x \in X : \|x\| \le R \}$ is also invariant under $f$ for all $R > 0$.  Therefore, the sets $H_\alpha \cap B_R$ are closed, convex, bounded, and invariant under $f$. They are also nonempty as long as $R \ge \alpha$.  Since $f$ has the fixed point property, we conclude that $H_\alpha \cap B_R$ must contain a fixed point of $f$ for every $\alpha > 0$ and every $R$ large enough.  Note that if $x \in H_\alpha$, then $\|x\| \ge \alpha$, so we see that $f$ has an unbounded set of fixed points.  
\end{proof}

\begin{remark}
The function $h$ in the proof of Theorem \ref{thm:uniqueFixed} is an example of a horofunction, and the sets $H_\alpha$ are horoballs on $X$.  Horofunctions have proven to be extremely useful tools for analyzing nonexpansive maps.  See e.g., \cite{Beardon90, GaVi12, Karlsson01, LLN16, Lins07, Lins21}.
\end{remark}

\begin{proof}[Proof of Theorem \ref{thm:uniqueEig}] 
Suppose that $f:\R^n_{>0} \rightarrow \R^n_{>0}$ is order-preserving, homogeneous, and real analytic. If $f$ has a unique eigenvector in $\R^n_{>0}$ up to scaling, then $E(f)$ is nonempty and bounded in $(\R^n_{>0}, d_H)$. To prove the converse, note that $d_H$ is a metric on the set $\Sigma = \{x \in \R^n_{>0} : x_n = 1\}$ by Proposition \ref{prop:dH}. The function $\hat{f}(x) = f(x)/(f(x)_n)$ maps $\Sigma$ into $\Sigma$ and is $d_H$-nonexpansive. Every eigenvector $x \in \R^n_{>0}$ of $f$ corresponds to a fixed point $\hat{x} = x/x_n$ of $\hat{f}$ in $\Sigma$, and every fixed point of $\hat{f}$ is an eigenvector of $f$. This means that $\hat{f}$ has a bounded set of fixed points in $(\Sigma,d_H)$.  

The entrywise natural logarithm $\log:\R^n_{>0} \rightarrow \R^n$ is an isometry from $(\Sigma,d_H)$ onto the subspace $V = \{x \in \R^n : x_n = 0\}$ with the variation norm 
\begin{equation} \label{varnorm}
\|x\|_\text{var} := \max_{i \in [n]} x_i - \min_{j \in [n]} x_j. 
\end{equation} 
The inverse of $\log$ is the entrywise natural exponential function $\exp:\R^n \rightarrow \R^n_{>0}$. The translated map $\exp \circ \hat{f} \circ \log$ is nonexpansive on $(V,\|\cdot\|_\text{var})$ and it is also a real analytic function with a bounded set of fixed points, so it must only have one fixed point $x \in V$ by Theorem \ref{thm:uniqueFixed}. Then $\exp(x)$ is the unique fixed point of $\hat{f}$ in $\Sigma$ and therefore all eigenvectors of $f$ in $\R^n_{>0}$ are multiples of $\exp(x)$.
\end{proof}

\section{Convergence of iterates to a unique eigenvector} \label{sec:iterates}

In some applications, it is important to know whether the iterates $f^k(x)$ will converge (after normalizing) to a single vector for all $x \in \R^n_{>0}$.  Of course, this can only happen if $f$ has a unique eigenvector in $\R^n_{>0}$ up to scaling. One known sufficient condition for the iterates to converge is if $f$ is differentiable at an eigenvector $u \in \R^n_{>0}$ and the derivative $f'(u)$ is primitive. In that case, $f^k(x)/\|f^k(x)\| \rightarrow u$ as $k \rightarrow \infty$ for all $x \in \R^n_{>0}$ \cite[Corollary 6.5.8]{LemmensNussbaum}. Here we suggest a somewhat more general sufficient condition. Recall that a strongly connected component of a directed graph is \emph{primitive} if there is an $m \in \N$ such that every ordered pair of vertices in the component can be connected by a path of length exactly $m$. 

\begin{theorem} \label{thm:convergence}
Let $f: \R^n_{>0} \rightarrow \R^n_{>0}$ be order-preserving and homogeneous.  Suppose that $f$ has an eigenvector $u \in \R^n_{>0}$ with $\|u\|= 1$ and $f$ is differentiable at $u$. If $\G(f'(u))$ has a unique final class, then $u$ is unique, that is, all other eigenvectors of $f$ in $\R^n_{>0}$ are scalar multiples of $u$. If, in addition, the final class is primitive, then $\lim_{k \rightarrow \infty} f^k(x)/\|f^k(x)\| = u$ for all $x \in \R^n_{>0}$. 
\end{theorem}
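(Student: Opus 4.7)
The plan is to linearize $f$ at $u$ and analyze the spectral picture of $A := f'(u)$. First, scale so that $r(f) = 1$, giving $f(u) = u$. Differentiating the homogeneity relation $f(tu) = tu$ at $t = 1$ yields $Au = u$, while order-preservation makes $A$ entrywise nonnegative with graph $\G(f'(u))$. Following the setup in the proof of Theorem \ref{thm:uniqueEig}, let $\Sigma := \{x \in \R^n_{>0} : x_n = 1\}$ and $\hat f(x) := f(x)/f(x)_n$, so that $\hat f$ is $d_H$-nonexpansive on $\Sigma$ and its fixed points correspond to positive eigenvectors of $f$ modulo scaling. Conjugating by $\exp$ transfers $\hat f$ to a $\|\cdot\|_\text{var}$-nonexpansive map $\tilde f$ on $V := \{y \in \R^n : y_n = 0\}$ with fixed point $y^* := \log(u/u_n)$. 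A chain rule computation, using the scale-invariance $f'(tu) = f'(u)$, identifies the derivative $B := \tilde f'(y^*)$ on $V$ with the descent of $\tilde A := \operatorname{diag}(u)^{-1} A \operatorname{diag}(u)$, a nonnegative matrix satisfying $\tilde A \mathbf{1} = \mathbf{1}$ and $\G(\tilde A) = \G(A)$.

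For the uniqueness claim, the hypothesis that $\G(A)$ has a unique final class, together with the existence of the positive eigenvector $u$, implies by classical Perron-Frobenius theory for reducible nonnegative matrices that the $1$-eigenspace of $A$ is exactly $\R u$. Equivalently, the $1$-eigenspace of $\tilde A$ is $\R \mathbf{1}$, so $I - B$ is invertible on $V$. The implicit function theorem applied to $y \mapsto \tilde f(y) - y$ at $y^*$ then gives that $y^*$ is an isolated fixed point of $\tilde f$. Extending local isolation to global uniqueness is the first main obstacle, since $\|\cdot\|_\text{var}$ is not strictly convex. I would combine local isolation with $d_H$-nonexpansiveness by a dynamical argument: a putative second fixed point $y^\dagger \ne y^*$, together with the line segment between them, would under iteration by $\tilde f$ yield a family of equal-length curves joining the same endpoints; combined with the absence of a $1$-eigenvalue for $B$ this would produce fixed points of $\tilde f$ accumulating at $y^*$, contradicting local isolation.

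For the convergence claim, primitivity of the final class $C$ refines the spectral picture: $A_C$ has $1$ as a simple eigenvalue with all other eigenvalues of strictly smaller modulus (Perron-Frobenius for primitive matrices), and the block-triangular analysis used in the proof of Theorem \ref{thm:convMain}\ref{item:analyticConverse} gives $r(A_{[n] \setminus C}) < 1$. Hence $B$ has spectral radius strictly less than $1$ on $V$, so $y^*$ is a locally exponentially attracting fixed point of $\tilde f$. Globally, any orbit $y_k := \tilde f^k(y_0)$ stays bounded by nonexpansiveness, so its $\omega$-limit set $\Omega \subset V$ is a nonempty compact $\tilde f$-invariant set on which $\tilde f$ acts as an isometry (a standard consequence of nonexpansiveness on compact invariant sets). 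Any isometry of a compact subset of a finite-dimensional normed space has a fixed point, obtained as the barycenter of an invariant probability measure for the compact abelian group generated by $\tilde f|_\Omega$; by the uniqueness from the first half, this fixed point must be $y^*$, so $y^* \in \Omega$. Once the orbit enters the basin of exponential attraction around $y^*$ it converges, and passing back via $\exp$ gives $f^k(x)/\|f^k(x)\| \to u$ for every $x \in \R^n_{>0}$.
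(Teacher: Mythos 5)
Your reduction to the derivative is sound as far as it goes: with $r(f)=1$, $A:=f'(u)$ is nonnegative, $Au=u$, and a single final class forces (by strong nonnegativity, or Theorem \ref{thm:convMain}\ref{item:analyticConverse} applied to $A$, or Proposition \ref{prop:uniquefinal}) that there is exactly one basic class, so the eigenvalue $1$ of $A$ has algebraic multiplicity one; this is what actually makes $I-B$ invertible on $V$ (the geometric statement ``the $1$-eigenspace is $\R u$'' alone would not rule out a Jordan block, i.e.\ a solution of $\tilde A v=v+c\mathbf{1}$), and pointwise differentiability at $y^*$ then does give that $y^*$ is an \emph{isolated} fixed point of $\tilde f$. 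The genuine gap in the uniqueness half is the passage from local isolation to global uniqueness. The variation norm is not strictly convex, so a second fixed point $y^\dagger$ does not force the segment $[y^*,y^\dagger]$ (or any curve) to consist of fixed points, and your sketch --- ``equal-length curves joining the same endpoints \dots would produce fixed points of $\tilde f$ accumulating at $y^*$'' --- is not an argument: nonexpansiveness gives no mechanism that manufactures those fixed points. This missing step is exactly the nontrivial content; the paper does not attempt a linearization-plus-dynamics argument here but invokes \cite[Corollary 6.4.7]{LemmensNussbaum} (if $f'(u)$ has a unique eigenvector in $\R^n_{>0}$ up to scaling, then so does $f$), whose proof is a separate nonlinear result, not a consequence of isolation.

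The convergence half has a second, sharper gap: the lemma you rely on, ``any isometry of a compact subset of a finite-dimensional normed space has a fixed point, obtained as the barycenter of an invariant probability measure,'' is false. An irrational rotation of the unit circle in $\R^2$ is an isometry of a compact subset with no fixed point; barycenters are preserved only by \emph{affine} maps, and the barycenter of an invariant measure need not even lie in the set. Consequently you cannot conclude $y^*\in\Omega$, and the scenario in which the orbit settles onto a nontrivial compact set at distance at least the attraction radius from $y^*$, on which $\tilde f$ acts as an isometry, is not excluded by anything you wrote. This is precisely where the paper brings in machinery specific to polyhedral norms: by \cite[Theorem 8.1.7]{LemmensNussbaum}, every orbit trapped in an order interval $[\alpha u,\beta u]$ converges to a periodic orbit, and primitivity of the final class is then used through $(f^k)'(u)=A^k$ together with Proposition \ref{prop:uniquefinal} and \cite[Corollary 6.4.7]{LemmensNussbaum} to show that any periodic limit point is a fixed point of some $f^p$ whose only positive eigenvector is $u$, hence a multiple of $u$. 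Your alternative use of primitivity (spectral radius of $B$ less than one, local exponential attraction) is fine and would even give a rate, but only once the global recurrence to a neighborhood of $y^*$ is established; as it stands, both halves of the proposal are missing their key global ingredient.
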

 
The condition that $\G(f'(u))$ has a unique final class that is primitive is sufficient but not necessary for the normalized iterates of $f$ to converge to the unique positive unit eigenvalue $u$. We shall see in the proof, however, that this condition is necessary for $r(f)$ to be a simple eigenvalue of the derivative $f'(u)$ and for all other eigenvalues of $f'(u)$ to have absolute value strictly smaller than $r(f)$.  This is an important consideration since it guarantees a linear rate of convergence for the iterates $f^k(x)/\|f^k(x)\|$. 

To prove Theorem \ref{thm:convergence} we use the following linear algebra proposition. Here $A = \begin{bmatrix} a_{ij} \end{bmatrix}_{i,j \in [n]}$ will be an $n$-by-$n$ nonnegative matrix.  We use $\rho(A)$ to denote the \emph{spectral radius} of $A$ which is the maximum of the absolute values of the eigenvalues of $A$. For a nonnegative $n$-by-$n$ matrix, the spectral radius $\rho(A)$ and the cone spectral radius $r(A)$ are the same (see e.g., \cite[Proposition 5.3.6 and Corollary 5.4.2]{LemmensNussbaum}). For any subset $J \subset [n]$, we let $A_J$ denote the principal submatrix $\begin{bmatrix}a_{ij} \end{bmatrix}_{i,j \in J}$.  We don't claim that this proposition is original, but since we are unaware of a reference, we include a proof.  

\begin{proposition} \label{prop:uniquefinal}
Let $A$ be an $n$-by-$n$ nonnegative matrix.  Then $A$ has a unique (up to scaling) positive eigenvector $u \in \R^n_{>0}$ if and only if $\G(A)$ has a unique final class $C$ and $\rho(A_C) > \rho(A_{[n] \bs C})$. If, in addition, the unique final class $C$ is primitive, then all other eigenvalues of $A$ have absolute value strictly less than $\rho(A)$.
\end{proposition}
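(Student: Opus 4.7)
The plan is to deduce this proposition from the general theory of Section \ref{sec:conv} applied to the linear setting, together with a short spectral argument handling the primitive case. A nonnegative matrix $A$ acting on $\R^n_{>0}$ is order-preserving, positively homogeneous, real analytic, and multiplicatively convex (multiplicative convexity follows from H\"older's inequality, or from Lemma \ref{lem:prop61} applied coordinatewise). Moreover, for a nonnegative matrix the cone spectral radius agrees with the usual spectral radius $\rho(A)$, and more generally $r(A^J_0) = \rho(A_J)$ for every $J \subseteq [n]$, since $A^J_0$ restricted to vectors supported on $J$ acts as the principal submatrix $A_J$.

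For the main equivalence I would chain three earlier results. Theorem \ref{thm:uniqueEig} says that $A$ has a unique positive eigenvector up to scaling if and only if $E(A)$ is nonempty and bounded in $(\R^n_{>0}, d_H)$. Theorem \ref{thm:convMain}\ref{item:convex} then identifies this condition with $A$ being strongly nonnegative and $\G(A)$ having a unique final class $C$. Finally, Lemma \ref{lem:classCap} shows that, given a unique final class $C$, strong nonnegativity reduces to the single inequality $r(A^{[n]\bs C}_0) < r(A^C_0)$, which in matrix language is exactly $\rho(A_{[n]\bs C}) < \rho(A_C)$. Combining these gives both directions of the equivalence.

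For the second statement, suppose the unique final class $C$ is primitive. After permuting indices, $A$ takes the block lower-triangular form
$$A = \begin{bmatrix} A_C & 0 \\ A_{21} & A_{[n]\bs C} \end{bmatrix},$$
so $\det(\lambda I - A) = \det(\lambda I - A_C)\det(\lambda I - A_{[n]\bs C})$ and the spectrum of $A$ is the multiset union of the spectra of $A_C$ and $A_{[n]\bs C}$. Since $C$ is a primitive strongly connected component of $\G(A)$, the principal submatrix $A_C$ is a primitive irreducible nonnegative matrix, so by the classical Perron-Frobenius theorem for primitive matrices, $\rho(A_C)$ is a simple eigenvalue of $A_C$ and every other eigenvalue of $A_C$ has modulus strictly less than $\rho(A_C) = \rho(A)$. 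All eigenvalues of $A_{[n]\bs C}$ have modulus at most $\rho(A_{[n]\bs C}) < \rho(A)$, and the conclusion follows.

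The main subtlety to address is verifying that under the graph hypotheses the matrix $A$ actually maps $\R^n_{>0}$ into $\R^n_{>0}$, so that the cited theorems apply. A zero row $i$ would make $\{i\}$ a singleton final class with $\rho(A_{\{i\}}) = 0$; uniqueness of the final class would force $C = \{i\}$, contradicting $\rho(A_C) > \rho(A_{[n]\bs C}) \ge 0$, while if $C \ne \{i\}$ one gets a second final class, contradicting uniqueness. Hence the graph condition automatically rules out zero rows, and the rest is bookkeeping.
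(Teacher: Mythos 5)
Your proposal is correct and follows essentially the same route as the paper: both reduce the equivalence to Theorem \ref{thm:uniqueEig}, Theorem \ref{thm:convMain}\ref{item:convex}, and Lemma \ref{lem:classCap} after noting that $A$ is multiplicatively convex (Lemma \ref{lem:prop61}) and that $r(A^J_0)=\rho(A_J)$, and both settle the primitive case via the classical Perron--Frobenius theorem for $A_C$ combined with the block-triangular splitting of the spectrum. The only remark is that your zero-row discussion covers the direction where the graph and spectral hypotheses hold; for the converse you should also note (as the paper does, tersely) that a unique positive eigenvector rules out zero rows, since a zero row together with a strictly positive eigenvector forces $A=0$, which has no unique positive eigenvector when $n\ge 2$.
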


\begin{proof}
In order for $A$ to have a positive eigenvector, or for $\G(A)$ to have a unique final class, $A$ must have at least one positive entry in each row.  Then $A$ maps $\R^n_{>0}$ into itself and $A$ is multiplicatively convex by Lemma \ref{lem:prop61}. For any $J \subseteq [n]$, $r(A^J_0) = \rho(A_J)$ and $r(A^{[n] \bs J}_0) = \rho(A_{[n] \bs J})$.  Therefore Theorem \ref{thm:convMain}\ref{item:convex} and Theorem \ref{thm:uniqueEig} imply that $A$ has a unique eigenvector in $\R^n_{>0}$ up to scaling if and only if $\G(A)$ has a unique final class $C$ and $\rho(A_{[n] \bs C}) < \rho(A_C)$.  

If the unique final class $C$ of $\G(A)$ is primitive, then $A_C$ is a primitive matrix, so $\rho(A) = \rho(A_C)$ is the only eigenvalue of $A_C$ with maximum modulus \cite[Theorem 8.5.3]{HornJohnson}.  Since $\rho(A_{[n] \bs C}) < \rho(A_C)$, it follows that $\rho(A)$ is also the only eigenvalue of $A$ with maximum modulus.  
\end{proof}

%\color{red}
%\begin{corollary} \label{cor:uniquefinal}
%Let $A$ be an $n$-by-$n$ nonnegative matrix such that $\G(A)$ has a unique final class.  If $A$ has an eigenvector $u \in \R^n_{>0}$, then $u$ is the unique eigenvector corresponding to $\rho(A)$ up to scaling. If, in addition, the unique final class is primitive, then all other eigenvalues of $A$ have absolute value strictly less than $\rho(A)$. \hl{This corollary is not important, so it should just be incorporated into the proof of the theorem.}
%\end{corollary}
%
%\begin{proof}
%Let $C$ denote the unique final class of $\G(A)$.  Note that $\rho(A_C) = r(A^C_0)$ and $\rho(A_{N \bs C}) = r(A^{N \bs C}_0)$ in the notation of Section \ref{sec:exist}. Since $A$ has an eigenvector in $\R^n_{>0}$, $A$ must be strongly nonnegative by Theorem \ref{thm:strongnonneg}.  Therefore $\rho(A_C) > \rho(A_{N \bs C})$. Then Proposition \ref{prop:uniquefinal} implies that $u$ is the unique eigenvector of $A$ corresponding to $\rho(A)$ up to scaling, and that all other eigenvalues of $A$ have absolute value strictly less than $\rho(A)$ if $C$ is primitive.
%\end{proof}
%\color{black}

\begin{proof}[Proof of Theorem \ref{thm:convergence}]
We can assume without loss of generality that $r(f) =1$ since we can replace $f$ by $r(f)^{-1}f$. Then all multiples of $u$ are fixed points of $f$. Since $f$ is order-preserving, $A := f'(u)$ is a nonnegative matrix; and since $f$ is homogeneous, $u$ is an eigenvector of $A$ with eigenvalue $r(f) = 1$. Since $u$ has all positive entries, this means that $\rho(A) = 1$.

Let $C$ denote the unique final class of $\G(A)$.  Note that $\rho(A_C) = r(A^C_0)$ and $\rho(A_{[n] \bs C}) = r(A^{[n] \bs C}_0)$ in the notation of Section \ref{sec:exist}. Since $A$ has an eigenvector in $\R^n_{>0}$, $A$ must be strongly nonnegative by Theorem \ref{thm:convMain} \ref{item:analyticConverse}.  Therefore $\rho(A_C) > \rho(A_{[n] \bs C})$. Then Proposition \ref{prop:uniquefinal} implies that $u$ is the unique eigenvector of $A$ corresponding to $\rho(A)$ up to scaling. By \cite[Corollary 6.4.7]{LemmensNussbaum}, $u$ is also the unique positive eigenvector of $f$ in $\R^n_{>0}$ with $\|u\| = 1$.

If $C$ is primitive, then Proposition \ref{prop:uniquefinal} also implies that all other eigenvalues of $A$ have absolute value strictly less than $\rho(A)$.
By the chain rule for Fr\'echet derivatives, $(f^k)'(u) = A^k$.  Therefore $u$ is the unique positive eigenvector of $(f^k)'(u)$ with $\|u \|=1$ for all $k \in \N$. By \cite[Corollary 6.4.7]{LemmensNussbaum}, $u$ is also the unique positive eigenvector of $f^k$ in $\R^n_{>0}$ with $\|u\| = 1$ for all $k \in \N$.   

For any $x \in \R^n_{>0}$, there are constants $\alpha, \beta > 0$ such that $\alpha u \le x \le \beta u$. Since $f$ is order-preserving, $\alpha u \le f^k(x) \le \beta u$ for all $k \in \N$.  By \cite[Theorem 8.1.7]{LemmensNussbaum} it follows that there is a period $p \in \N$ and a point $y \in \R^n_{\ge 0}$ such that $\lim_{k \rightarrow \infty} f^{kp}(x) = y$ and $y$ is periodic with period $p$ under iteration by $f$. Furthermore, $\alpha u \le y \le \beta u$, so $y \in \R^n_{>0}$.  But then $f^p(y) = y$, so $y$ must be a multiple of $u$ since $u$ is the only eigenvector of $f^p$ in $\R^n_{>0}$ up to scaling.  From this we see that $f^k(x)/\|f^k(x)\|$ converges to $u$ as $k \rightarrow \infty$. 
\end{proof}

Theorem \ref{thm:convergence} requires $f$ to be differentiable at the eigenvector $u \in \R^n_{>0}$. There are other conditions which can show that the normalized iterates of $f$ converge to an eigenvector when $f$ is not differentiable. See \cite[Section 6.5]{LemmensNussbaum} and \cite[Theorem 7.8]{AkGaNu14} for examples. Note that the result in \cite{AkGaNu14} can be used to prove a geometric rate of convergence of the iterates in Theorem \ref{thm:convergence} when the unique final class is primitive. Our next result shows that even when the iterates of $f$ do not converge to an eigenvector, the normalized iterates of $f+\id$ always do, as long as $f$ has an eigenvector in $\R^n_{>0}$.  A similar result, \cite[Theorem 11]{GaSt20}, proves that the normalized iterates of the function $x \mapsto x^{1/2} f(x)^{1/2}$ converge to an eigenvector of $f$ when $f:\R^n_{>0} \rightarrow \R^n_{>0}$ is order-preserving, homogeneous, and has an eigenvector in $\R^n_{>0}$. The result in \cite{GaSt20} is a multiplicative version of Krasnoselskii-Mann iteration. The observation about $f+\id$ turns out to be a special case of \cite[Theorem 2.3]{Jiang96} when $K$ is the standard cone $\R^n_{\ge 0}$, but the proof we give here works for any polyhedral cone. Recall that a \emph{polyhedral cone} is a closed cone that is a convex hull of a finite number of rays emanating from the origin in a Banach space. 
%Let $X$ be a finite dimensional real Banach space, let $\id$ denote the identity operator on $X$, and let $K \subset X$ be a polyhedral cone with nonempty interior. Suppose that $f: \inter K \rightarrow \inter K$ is order-preserving and homogeneous.  Let $\hat{f} = f(x)/\|f(x)\|$ for all $x \in \inter K$.
%Let $f:\R^n_{>0} \rightarrow \R^n_{>0}$ be order-preserving and homogeneous and let $\hat{f}(x) = f(x)/\|f(x)\|$ for every $x \in \R^n_{>0}$.  
%If $f$ has an eigenvector $u \in \inter K$ with $\|u \|=1$, then $u$ is a fixed point of $\hat{f}$.  Since $\hat{f}$ is nonexpansive with respect to $d_H$, it follows that the orbit of any $x \in \inter K$ under $\hat{f}$ is $d_H$-bounded. In this situation, it is known that there is a period $p$ (depending on $x$) such that $\lim_{k \rightarrow \infty} \hat{f}^{kp}(x) = y$ where $y$ is a periodic point of $\hat{f}$ with period $p$ \cite[Corollary 4.2.5 and Proposition 2.2.3]{LemmensNussbaum}.  So in general, the normalized iterates of $f$ might only converge to a periodic orbit rather than an eigenvector. However, the following result is true.
%the normalized iterates of $f+\id$ cannot converge to a periodic orbit with period $p >1$. 
%However, the following result shows that if we add a copy of the identity operator $\id$ to $f$, then the normalized iterates of the resulting map will converge to an eigenvector of $f$.  This can be useful to approximate an eigenvector in case one is known to exist.  

\begin{theorem} \label{thm:fplusI}
Let $K$ be a polyhedral cone with nonempty interior in a finite dimensional real Banach space $X$ and let $\id$ denote the identity operator on $X$.  Let $f: \inter K \rightarrow \inter K$ be order-preserving and homogeneous and let $g = f+\id$. If $f$ has an eigenvector in $\inter K$, then $g^k(x)/\|g^k(x)\|$ converges to an eigenvector of $f$ as $k \rightarrow \infty$ for every $x \in \R^n_{>0}$.  
\end{theorem}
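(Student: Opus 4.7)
The plan is to exploit two structural properties of $g := f + \id$. First, since $g(x) - x = f(x) \in K$ for every $x \in \inter K$, we have $g(x) \ge_K x$, so the orbit $\{g^k(x)\}$ is monotone nondecreasing in the cone order. Second, $g$ is order-preserving and homogeneous, hence $d_H$-nonexpansive on $\inter K$. Fix an eigenvector $u \in \inter K$ of $f$; then $f(u) = \mu u$ with $\mu > 0$ (since $f(u) \in \inter K$), and $g(u) = \lambda u$ with $\lambda := 1+\mu > 1$. I would pass to the rescaled iterates $y_k := g^k(x)/\lambda^k$, which satisfy $y_{k+1} = T(y_k)$ for $T(y) := g(y)/\lambda$; note $T(u) = u$ and $T$ is $d_H$-nonexpansive. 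Since $m(x/u)\,u \le_K x \le_K M(x/u)\,u$, applying the order-preserving homogeneous map $g^k$ and dividing by $\lambda^k$ shows $\{y_k\} \subset [m(x/u)\,u, M(x/u)\,u]$, a compact order interval in $\inter K$, so the $\omega$-limit set $\Omega$ of $\{y_k\}$ is nonempty and compact.

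The key step is to produce a fixed point of $T$ inside $\Omega$, equivalently an eigenvector of $f$ in $\inter K$. Granted such a $y^* \in \Omega$ with $T(y^*) = y^*$ and $y_{k_n} \to y^*$, the nonexpansiveness of $T$ together with $T(y^*) = y^*$ gives $d_H(y_k, y^*) \le d_H(y_{k_n}, y^*)$ for every $k \ge k_n$, so $y_k \to y^*$. Consequently $g^k(x)/\|g^k(x)\| = y_k/\|y_k\|$ converges to $y^*/\|y^*\| \in E(f)$, and uniqueness of the limit is automatic from this nonexpansive trapping argument.

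To locate a fixed point of $T$ inside $\Omega$, I would combine a Lyapunov argument with combinatorial rigidity coming from polyhedrality. The Lyapunov function $\psi(y) := d_H(y, g(y))$ vanishes precisely on eigenvectors of $g$, and by $d_H$-nonexpansiveness of $g$ the sequence $\psi(y_k)$ is nonincreasing with a limit $\psi_\infty \ge 0$; continuity and the $\omega$-limit characterisation force $\psi \equiv \psi_\infty$ on $\Omega$. Let $\ell_1, \ldots, \ell_m$ be the facet-defining functionals of the polyhedral cone $K$ and set $r_i(y) := \ell_i(g(y))/\ell_i(y) = 1 + \ell_i(f(y))/\ell_i(y) > 1$, so that $\psi(y) = \log(\max_i r_i(y)/\min_i r_i(y))$. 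The classical argument for nonexpansive maps on compact invariant sets (surjectivity of $T|_\Omega$ plus nonexpansiveness) shows that $T$ restricts to a $d_H$-isometry on $\Omega$; combined with the order-monotonicity $M(T(y)/u) \le M(y/u)$ and $m(T(y)/u) \ge m(y/u)$ and the identity $T(u) = u$, this forces both $M(y/u)$ and $m(y/u)$ to be constant along the $T$-orbit of each $y^* \in \Omega$. The equality case of the basic inequality $M(g(y)/g(u)) \le M(y/u)$ for the additive splitting $g = f + \id$ then forces the existence of an index $i^*$ that is simultaneously maximal for $\ell_i(y^*)/\ell_i(u)$ and $\ell_i(f(y^*))/\ell_i(u)$, at which $r_{i^*}(y^*) = \lambda$; dually, some $j^*$ gives $r_{j^*}(y^*) = \lambda$. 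Propagating these equality cases along the $T$-orbit of $y^*$ via the commutation $T \circ g = g \circ T$ and invoking the finite supply of possible maximal/minimal index subsets furnished by polyhedrality should force $r_i(y^*) = \lambda$ for every $i$, whence $\psi(y^*) = 0$ and $y^*$ is an eigenvector of $f$. The hard step is this combinatorial rigidity argument, which is precisely where polyhedrality of $K$ enters essentially and where the proof goes meaningfully beyond the treatment of the standard cone $\R^n_{\ge 0}$ in \cite{Jiang96}.
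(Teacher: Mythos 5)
Your reduction steps are fine: trapping the rescaled iterates $y_k = g^k(x)/\lambda^k$ in the order interval $[m(x/u)u,\, M(x/u)u] \subset \inter K$, the monotonicity of $d_H(y_k, y^*)$ once a fixed point $y^*$ of $T=g/\lambda$ is found in the $\omega$-limit set, and the passage from $d_H$-convergence to convergence of the normalized iterates are all correct. The genuine gap is the step you yourself flag as ``the hard step'': producing that fixed point. Constancy of $\psi(y)=d_H(y,g(y))$ on the $\omega$-limit set $\Omega$, together with $T|_\Omega$ being a surjective isometry, does not force $\psi \equiv 0$ there --- a priori $\Omega$ could be a nontrivial periodic orbit, or even an infinite minimal set, on which $\psi$ is a positive constant. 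Your proposed ``combinatorial rigidity'' argument (equality cases in $M(g(y)/g(u)) \le M(y/u)$ at facet-defining functionals, propagated along the orbit) is only a heuristic as written: the assertion that some index is \emph{simultaneously} extremal for $\ell_i(y^*)/\ell_i(u)$ and $\ell_i(f(y^*))/\ell_i(u)$ does not follow from the stated inequalities, and the propagation ``should force $r_i(y^*)=\lambda$ for every $i$'' is exactly the dynamical content that needs proof. So the argument as it stands does not close.

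For comparison, the paper splits the work differently. Polyhedrality enters only through a cited black box: for a polyhedral cone, every $d_H$-bounded orbit of a $d_H$-nonexpansive map converges to a \emph{periodic} orbit (\cite[Corollary 4.2.5 and Proposition 2.2.3]{LemmensNussbaum}); the orbit of $\hat g(x)=g(x)/\|g(x)\|$ is bounded because $g$ has an interior eigenvector. The new content is then a self-contained argument, valid for any closed cone, ruling out periods $p>1$ for $\hat g$: writing $y^{k+1}=y+f(y^1)+\cdots+f(y^k)$ (the additive structure of $g=f+\id$), choosing $m$ with $d_H(y,y^m)$ maximal over the periodic orbit and functionals $\varphi,\psi\in K^*$ attaining that distance, one gets uniform bounds $a \le \varphi(y^k)/\psi(y^k) \le ae^M$ and $b \le \varphi(f(y^k))/\psi(f(y^k)) \le be^M$, and both cases $b\le a$ and $b>a$ lead to contradictions with $\varphi(y^m)/\psi(y^m)=ae^M$ and $\varphi(y^p)/\psi(y^p)=a$ respectively. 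If you want to salvage your approach, either prove your rigidity claim in full (which would amount to reproving a substantial piece of the polyhedral periodic-orbit theory), or adopt the paper's strategy: invoke the convergence-to-periodic-orbits theorem to replace your $\omega$-limit analysis, and devote the argument to excluding nontrivial periods via the additive recursion --- note that this exclusion step uses only the splitting $g=f+\id$ and dual functionals of $K$, not the facet structure.
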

 
Before proving Theorem \ref{thm:fplusI}, we briefly recall some facts about Hilbert's projective metric on cones other than the standard cone $\R^n_{\ge 0}$.  Let $K$ be a closed cone with nonempty interior in a finite dimensional real Banach space $X$ and let $K^*$ denote the dual cone $\{ x^* \in X^* : x^*(x) \ge 0 \text{ for all } x \in K \}$.  Then for any $x, y \in K \bs \{0\}$, 
$$d_H(x,y) = \sup_{\varphi, \psi \in K^* \bs \{ 0 \}} \log  \left( \frac{ \varphi(x) }{ \psi(x) } \, \frac{ \psi(y) }{ \varphi(y) } \right).$$
Note that if $x \in \inter K$, then $\varphi(x) > 0$ for all $\varphi \in K^* \bs \{0 \}$.  See \cite[Section 2.1]{LemmensNussbaum} for more details.

\begin{proof}[Proof of Theorem \ref{thm:fplusI}]
%%%\color{blue}
%%%Let $g(x) = \frac{f(x)+x}{\|f(x)+x\|}$ for every $x \in \inter K$.  Let $\Sigma = \{x \in \inter K : \|x\| = 1\}$.  Then $g:\Sigma \rightarrow \Sigma$ is nonexpansive with respect to $d_H$.  Suppose that $f$ has an eigenvector $u \in \Sigma$. Then $u$ is a fixed point of $g$.  For any other $x \in \Sigma$, the omega limit set $\omega(x)$ of $x$ under $g$ \hl{need to define this} is a nonempty compact subset of $\Sigma$. We may therefore choose elements $y, z \in \omega(x)$ such that $d_H(y,z)$ is maximal.  Then there exists $\phi, \psi \in K^*$ such that 
%%%$$d_H(y,z) = \log  \left( \frac{ \varphi(y) }{ \psi(y) } \, \frac{ \psi(z) }{ \varphi(z) } \right).$$
%%%Let 
%%%$$a = \frac{\psi(y)}{\phi(y)} \text{ and } b = \frac{\psi(z)}{\phi(z)}.$$
%%%Then $d_H(y,z) = \log(b/a)$. 
\color{black}
Let $\hat{g}(x) = g(x)/\|g(x)\|$ for all $x \in \inter K$.  We will prove that $\hat{g}$ has no periodic orbits in $\inter K$ with period $p > 1$.  Since $f$ and therefore $g$ both have an eigenvector in $\inter K$, it follows that the orbit $\hat{g}^k(x) = g^k(x)/\|g^k(x)\|$ is $d_H$-bounded. Then since $K$ is polyhedral, the iterates $\hat{g}^k(x)$ converge to a periodic orbit in $\inter K$ as $k \rightarrow \infty$ by \cite[Corollary 4.2.5 and Proposition 2.2.3]{LemmensNussbaum}. So proving that there are no nontrivial periodic orbits will guarantee that $\hat{g}^k(x)$ converges to an eigenvector of $g$.  And any eigenvector of $g$ is also an eigenvector of $f$.  

%If $f$ has an eigenvector $u \in \inter K$ with $\|u \|=1$, then $u$ is a fixed point of $\hat{f}$.  Since $\hat{f}$ is nonexpansive with respect to $d_H$, it follows that the orbit of any $x \in \inter K$ under $\hat{f}$ is $d_H$-bounded. In this situation, it is known that there is a period $p$ (depending on $x$) such that $\lim_{k \rightarrow \infty} \hat{f}^{kp}(x) = y$ where $y$ is a periodic point of $\hat{f}$ with period $p$ \cite[Corollary 4.2.5 and Proposition 2.2.3]{LemmensNussbaum}.  So in general, the normalized iterates of $f$ might only converge to a periodic orbit rather than an eigenvector. However, the following result is true.

Suppose by way of contradiction that $\hat{g}$ has a periodic orbit in $\inter K$ with period $p > 1$.  Chose some $y$ in this orbit.  Let $y^0 = y$ and $y^{k+1} = g(y^k)$ for each $k \in \N$.  Then $y^p$ is a scalar multiple of $y$.  Also, $y^1 = g(y) = y+f(y)$.  Then recursively, 
\begin{equation} \label{recursive} 
y^{k+1} = y^k+ f(y^k) = y + f(y^1) + \ldots + f(y^k)
\end{equation}
for all $k \in \N$.  

Choose $m \in \{1, \ldots, p-1\}$ such that $d_H(y,y^m)$ is maximal. Let $M = d_H(y,y^m)$. Since $g$ is nonexpansive, this implies that $d_H(y^k,y^j) \le M$ for all $k, j \in \N$.  

Since $d_H(y,y^m) = M$, there are linear functionals $\varphi, \psi$ in the dual cone $K^*$ such that 
$$d_H(y,y^m) = \log \left( \frac{ \varphi(y^m) }{ \psi(y^m) } \, \frac{ \psi(y) }{ \varphi(y) } \right) = M.$$
Then 
$$\frac{ \varphi(y^m) }{ \psi(y^m) }  = e^M \frac{ \varphi(y) }{ \psi(y) }.$$
Let $a = \varphi(y)/\psi(y)$ so $\varphi(y^m)/\psi(y^m) = a e^M$.
For any $k \in \N$, 
$$\log \left( \frac{\varphi(y^k)\psi(y)}{\psi(y^k) \varphi(y)} \right) \le d_H(y,y^k) \le M = \log \left( \frac{\varphi(y^m)\psi(y)}{\psi(y^m) \varphi(y)} \right)$$
which implies that 
$$\frac{\varphi(y^k)}{\psi(y^k)} \le \frac{\varphi(y^m)}{\psi(y^m)} = a e^M.$$
Similarly, 
$$\log \left( \frac{\varphi(y^m)\psi(y^k)}{\psi(y^m) \varphi(y^k)} \right) \le d_H(y^k,y^m) \le M = \log \left( \frac{\varphi(y^m)\psi(y)}{\psi(y^m) \varphi(y)} \right)$$
so
$$a = \frac{\varphi(y)}{\psi(y)} \le \frac{\varphi(y^k)}{\psi(y^k)}.$$
Therefore
$$a \le \frac{\varphi(y^k)}{\psi(y^k)} \le a e^M$$ 
for all $k \in \N$. 
\color{black}  %\hl{Roger points out that this step is confusing.  Maybe we should explain that the map $x \mapsto \log(\varphi(x)/\psi(x))$ is Lipschitz-1 from $(\inter K,d_H)$ to $\R$?}   

Let $b = \min \{ \varphi(f(y^k))/\psi(f(y^k)) : 0 \le k \le p-1\}$. Since $f$ is $d_H$-nonexpansive, $d_H(f(y^k),f(y^j)) \le M$ for all $k, j \in \N$.  Therefore 
$$b \le \frac{\varphi(f(y^k))}{\psi(f(y^k))} \le b e^M$$
for all $k \in \N$. Suppose that $b \le a$.  Then 
%$\varphi(f(y^k))/\psi(f(y^k)) \le a e^M$ for all $k \in \N$.  This means that 
$$\varphi(f(y^k)) - a e^M \psi(f(y^k)) \le \varphi(f(y^k)) - b e^M \psi(f(y^k)) \le 0$$
for all $k \in \N$.  At the same time, $\varphi(y) - a \psi(y) = 0$ so $\varphi(y) - a e^M \psi(y) < 0$.   Combined with \eqref{recursive}, we must have 
$\varphi(y^m) - a e^M \psi(y^m) < 0$, but that contradicts the fact that $\varphi(y^m)/\psi(y^m) = a e^M$.  

Now suppose that $b > a$. Then 
$$\varphi(f(y^k)) - a \psi(f(y^k)) > \varphi(f(y^k)) - b \psi(f(y^k)) \ge 0$$ 
for all $k \in \N$.  Using \eqref{recursive} we have $\varphi(y^p) - a\psi(y^p) > 0$.  But $y^p$ is a scalar multiple of $y$. So $\varphi(y^p)/\psi(y^p) = \varphi(y)/\psi(y) = a$ which is a contradiction.  We conclude that $\hat{g}$ cannot have a nontrivial periodic orbit, so $\hat{g}^k(x)$ converges to an eigenvector of $f$ for every $x \in \inter K$.  
\end{proof}

Since this paper was submitted, a more general version of Theorem \ref{thm:fplusI} has been published \cite[Theorem 3.2]{Lins23}. That paper also shows that the rate of convergence of the normalized iterates of $f+\id$ is at least linear in two important special cases: when $f$ is piecewise affine and when $f$ is real analytic and multiplicatively convex.

\section{Applications and examples} \label{sec:examples}

\subsection{Functions in the class $\mathcal{M}$} 

A large class of order-preserving, homogeneous, and real analytic functions $f:\R^n_{\ge 0} \rightarrow \R^n_{\ge 0}$ was introduced by Nussbaum in \cite{Nussbaum89}. The entries of these functions are built from various means of the entries of $x \in \R^n_{\ge 0}$. Let $r \in \R \bs \{0\}$ and let $\sigma \in \R^n_{\ge 0}$ be a probability vector, that is, $\sum_{i \in [n]} \sigma_i = 1$. The \emph{$(r,\sigma)$-mean} of $x \in \R^n_{\ge 0}$ is 
\begin{equation} \label{Mrsigma}
M_{r\sigma}(x) := \left( \sum_{i \in [n]} \sigma_i x_i^r \right)^{1/r}.
\end{equation}
When $r = 0$, we define 
\begin{equation} \label{Mrsigma0}
M_{r\sigma}(x) := \prod_{i \in \supp(\sigma)} x_i^{\sigma_i}.
\end{equation}
Note that if $r = 1, 0$, or $-1$, then $M_{r \sigma}(x)$ corresponds to a weighted arithmetic, geometric, or harmonic mean, respectively. Let $x,y \in \R^n_{>0}$ and $0 < \theta < 1$. When $r > 0$, H\"older's inequality implies that 
\begin{align*}
M_{r \sigma}(x^\theta y^{1-\theta}) &\le \left( \sum_{i \in [n]} (\sigma_i x_i^r)^\theta (\sigma_i y_i^r)^{1-\theta} \right)^{1/r} \\
&\le M_{r \sigma}(x)^{\theta} M_{r \sigma}(y)^{1-\theta}
\end{align*}
so $M_{r \sigma}$ is multiplicatively convex. When $r=0$, $ \log \circ M_{r \sigma} \circ \exp$ is linear so $M_{0 \sigma}$ is multiplicatively convex.  %For any $r \in \R$, $M_{r \sigma}$ is real analytic on $\R^n_{>0}$. 
\color{black}

Following \cite{Nussbaum89} (see also \cite[Section 6.6]{LemmensNussbaum}), we say that a function $f:\R^n_{\ge 0} \rightarrow \R^n_{>0}$ is in \emph{class} $M$ if each entry of $f$ is a positive linear combination of $(r,\sigma)$-means. We also say that $f$ is in \emph{class} $M_+$ (alternatively, $M_-$) if $f$ is in class $M$ and each $(r, \sigma)$-mean in the formula for $f$ has $r \ge 0$ ($r < 0$).   Then \emph{class} $\mathcal{M}$ is the smallest class of functions $f: \R^n_{\ge 0} \rightarrow \R^n_{\ge 0}$ that contains class $M$ and is closed under addition and composition. Likewise, \emph{class} $\mathcal{M}_+$ and $\mathcal{M}_-$ are the smallest classes containing $M_+$ and $M_-$, respectively, that are closed under addition and composition. Note that all functions in class $\mathcal{M}$ are real analytic on $\R^n_{>0}$.

By Lemma \ref{lem:prop61}, every $f \in \mathcal{M}_+$ is multiplicatively convex. Therefore we have the following result which combines Theorems \ref{thm:convMain} and \ref{thm:uniqueEig}.
%with Remark \ref{rem:strongernonneg} \hl{You can replace this remark reference with the new theorem at the end of section 3.}.  
\begin{theorem} \label{thm:Mplus}
Let $f: \R^n_{>0} \rightarrow \R^n_{>0}$ be in $\mathcal{M}_+$. Then $f$ has an eigenvector in $\R^n_{>0}$ if and only if $f$ is strongly nonnegative. If $f$ does have an eigenvector in $\R^n_{>0}$, then it is unique up to scaling if and only if $\G(f)$ has a unique final class $C$.  
\end{theorem}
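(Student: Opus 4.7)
The plan is to observe that Theorem \ref{thm:Mplus} is essentially a direct corollary of the machinery already assembled: the class $\mathcal{M}_+$ sits inside the intersection of two hypothesis classes the paper has already exploited. First I would note that every $f \in \mathcal{M}_+$ is multiplicatively convex, since $(r,\sigma)$-means with $r \ge 0$ were shown above to be multiplicatively convex, and Lemma \ref{lem:prop61} propagates this property through the operations of addition and composition under which $\mathcal{M}_+$ is closed. I would also remind the reader that each entry of an $\mathcal{M}_+$ function is real analytic on $\R^n_{>0}$ (polynomials, roots, exponentials, and their sums and compositions on the positive orthant are all real analytic). With these two structural facts in hand, I can invoke Theorems \ref{thm:convMain} and \ref{thm:uniqueEig} directly.

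For the first biconditional, the ``if'' direction is immediate from Theorem \ref{thm:convMain}\ref{item:strongNonneg}: strong nonnegativity together with multiplicative convexity guarantees existence of a positive eigenvector. For the ``only if'' direction, I would invoke Theorem \ref{thm:convMain}\ref{item:analyticConverse}: since $f$ is real analytic and multiplicatively convex, the existence of any eigenvector in $\R^n_{>0}$ forces $f$ to be strongly nonnegative.

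For the second biconditional, I assume throughout that $E(f) \ne \varnothing$. If $\G(f)$ has a unique final class $C$, then (using that $f$ is now known to be strongly nonnegative by the first part) Theorem \ref{thm:convMain}\ref{item:convex} yields that $E(f)$ is nonempty and bounded in $(\R^n_{>0},d_H)$; because $f$ is real analytic, Theorem \ref{thm:uniqueEig} then upgrades this to uniqueness up to scaling. Conversely, if every positive eigenvector of $f$ is a scalar multiple of a fixed $u \in \R^n_{>0}$, then the projective image of $E(f)$ is a single point, so $E(f)$ is trivially $d_H$-bounded and nonempty; applying Theorem \ref{thm:convMain}\ref{item:convex} in reverse forces $\G(f)$ to possess exactly one final class.

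Essentially there is no hard step: the work was done in Sections \ref{sec:conv} and \ref{sec:unique}, and the only substantive verification is the claim that $\mathcal{M}_+$ really does consist of real-analytic, multiplicatively convex functions — a routine induction over the generation of the class. The mild subtlety worth flagging is that in the second biconditional one must use the outcome of the first biconditional (strong nonnegativity) as a hypothesis when invoking Theorem \ref{thm:convMain}\ref{item:convex}; apart from that bookkeeping point, the proof is just a three-line assembly of the preceding theorems.
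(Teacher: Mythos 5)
Your proposal is correct and follows the paper's own route: the paper presents Theorem \ref{thm:Mplus} precisely as the combination of Theorems \ref{thm:convMain} and \ref{thm:uniqueEig}, after noting (via Lemma \ref{lem:prop61} and the multiplicative convexity of $(r,\sigma)$-means with $r\ge 0$) that every $f\in\mathcal{M}_+$ is multiplicatively convex and real analytic on $\R^n_{>0}$. Your assembly of the two biconditionals, including the observation that uniqueness up to scaling makes $E(f)$ trivially $d_H$-bounded so that Theorem \ref{thm:convMain}\ref{item:convex} can be read in reverse, matches the intended argument.
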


Functions in class $\mathcal{M}_-$ are not multiplicatively convex so we do not have such simple conditions for existence and uniqueness of eigenvectors there. However, the results of Section \ref{sec:exist} and Section \ref{sec:unique} still let us find necessary and sufficient conditions on the symbolic parameters of any function in $\mathcal{M}$ to have a unique entrywise positive eigenvector up to scaling.  Our first detailed example is a function in $\mathcal{M}_-$ that comes from a population biology model.

\begin{example} \label{ex:Schoen}
The following order-preserving homogeneous function $f:\R^4_{>0} \rightarrow \R^4_{>0}$ was introduced by Schoen \cite{Schoen86} in a population model. The function was analyzed in detail in \cite[Section 3]{Nussbaum89}, and necessary and sufficient conditions were given there for the existence and uniqueness of an eigenvector in $\R^4_{>0}$. In what follows we will show how the methods described in this paper make the analysis easier and lead to the same conclusions. Let
$$f\left( \begin{bmatrix} x_1 \\ x_2 \\ x_3 \\ x_4 \end{bmatrix} \right) := 
\begin{bmatrix}
a_1 x_1 + b_1 \theta(x_1,x_2) + c_1 \theta(x_1,x_4) + d_1 \theta(x_2,x_3) \\
a_2 x_2 + b_2 \theta(x_1,x_2) + c_2 \theta(x_1,x_4) + d_2 \theta(x_2,x_3) \\
a_3 x_3 + b_3 \theta(x_3,x_4) + c_3 \theta(x_1,x_4) + d_3 \theta(x_2,x_3) \\
a_4 x_4 + b_4 \theta(x_3,x_4) + c_4 \theta(x_1,x_4) + d_4 \theta(x_2,x_3) 
\end{bmatrix}$$  
where $a_i, b_i, c_i, d_i$ are nonnegative constants, and $\theta(s,t) := (s^{-1}+t^{-1})^{-1}$. In the original model, the parameters $a_i$ were negative, but they can be made positive by adding a multiple of the identity to $f$ without affecting the existence of eigenvectors. Observe that $\theta(0,t) = 0$ and $\theta(\infty,t) = t$. We make the following additional assumptions about $f$:
\begin{itemize}
\item $a_i > 0$ for all $i$.
\item $d_1, c_2, c_3$ and $d_4$ are all positive. 
\item $a_1 < a_2+b_2$, $a_2<a_1+b_1$, $a_3<a_4+b_4$, and $a_4<a_3+b_3$. 
\end{itemize}

With these assumptions, we can find the hypergraphs $\Hm$ and $\Hp$ associated with $f$. Since each $a_i >0$, the hypergraph $\Hm$ has no hyperarcs. Figure \ref{fig:Schoen} shows the minimal hyperarcs of $\Hp$.  The only invariant sets in $\Hp$ are the singleton sets $\{1\}, \{2\}, \{3\}, \{4\}$, and the sets $\{1,2\}$, $\{1,3\}$, $\{2,4\}$, and $\{3,4\}$. 
All other nonempty $J \subsetneq [4]$ have $\reach(J,\Hp) = [4]$, so we do not need to check \eqref{superIllum} for those. It turns out that we won't need to check \eqref{superIllum} for all eight of the invariant sets either. If we can verify \eqref{superIllum} for each of the four invariant doubleton sets, then Theorem \ref{thm:quick} will show that \eqref{superIllum} also holds for $J = \{1\}, \{2\}, \{3\},$ and $\{4\}$, and thus $f$ has an eigenvector in $\R^4_{>0}$.  

\begin{figure}[ht] \label{fig:Schoen}
\begin{center}
\begin{tikzpicture}
\path (1,0) node[draw,shape=circle,scale=0.75] (A1) {1};
\path (2,0) node[draw,shape=circle,scale=0.75] (A4) {4};
\path (-2,0) node[draw,shape=circle,scale=0.75] (A2) {2};
\path (-1,0) node[draw,shape=circle,scale=0.75] (A3) {3};

\draw[thick,->] (A1) to [bend right=42] (A2);
\draw[thick,->] (A4) to [bend right=45] (A2);
\draw[thick,->] (A1) to [bend right=42] (A3);
\draw[thick,->] (A4) to [bend right=45] (A3);
\draw[thick,->] (A2) to [bend right=45] (A1);
\draw[thick,->] (A3) to [bend right=42] (A1);
\draw[thick,->] (A2) to [bend right=45] (A4);
\draw[thick,->] (A3) to [bend right=42] (A4);

\fill[left color=gray, right color=white,opacity=0.2] (A1) to [bend right=42] (A2) to [bend left=45] (A4) -- cycle;
\fill[left color=gray, right color=white,opacity=0.2] (A1) to [bend right=42] (A3) to [bend left=45] (A4) -- cycle;
\fill[left color=white, right color=gray,opacity=0.2] (A2) to [bend right=45] (A1) to [bend left=42] (A3) -- cycle;
\fill[left color=white, right color=gray,opacity=0.2] (A2) to [bend right=45] (A4) to [bend left=42] (A3) -- cycle;
\end{tikzpicture}
\end{center}
\caption{The hypergraph $\Hp$ for the maps in Example \ref{ex:Schoen}. }
\end{figure}

Below we write out \eqref{superIllum} with explicit formulas for $f^J_0$ and $f^{[4] \bs J}_\infty$ for each of the four invariant doubleton sets.  We will see that \eqref{superIllum} is always satisfied for two of them, and the remaining two amount to a necessary and sufficient condition on the parameters $a_i, b_i, c_i,$ and $d_i$ for the eigenspace $E(f)$ to be nonempty and bounded in $(\R^4_{>0},d_H)$.  Since $f$ is differentiable on $\R^4_{>0}$ and the derivative is always irreducible, it follows from Remark \ref{rem:irred} that any eigenvector of $f$ in $\R^4_{>0}$ must be unique up to scaling.  This means that the conditions below are necessary and sufficient for $f$ to have \emph{any} eigenvectors in $\R^4_{>0}$.  

\begin{description} 
\item[Case 1] $J = \{1,2\}$:

$$
r \left(
\begin{bmatrix}
a_1 x_1 + b_1 \theta(x_1,x_2)\\
a_2 x_2 + b_2 \theta(x_1,x_2)\\
0 \\ 
0
\end{bmatrix} \right) < 
\lambda \left(
\begin{bmatrix}
\infty \\
\infty \\
a_3 x_3 + b_3 \theta(x_3,x_4)+c_3 x_4 + d_3 x_3 \\
a_4 x_4 + b_4 \theta(x_3,x_4)+c_4 x_4 + d_4 x_3 
\end{bmatrix} \right).
$$

By applying Theorem \ref{thm:super} to the map 
$$\begin{bmatrix} x_1 \\ x_2 \end{bmatrix} \mapsto 
\begin{bmatrix}
a_1 x_1 + b_1 \theta(x_1,x_2)\\
a_2 x_2 + b_2 \theta(x_1,x_2)\\
\end{bmatrix}$$
we see that it has a nonempty and bounded eigenspace in $(\R^2_{>0},d_H)$ if and only if $a_1 < a_2+b_2$ and $a_2 < a_1 + b_1$. 
Since we have assumed that those inequalities are true, we can use Theorem \ref{thm:quick} to conclude that \eqref{superIllum} holds for $J = \{1\}$ and $J = \{2\}$ if it holds for $J = \{1,2\}$. Furthermore Theorem \ref{thm:super} can also be used to show that $f^{[4] \bs J}_\infty$ has an eigenvector in $\R^{\{3,4\}}_{>0}$. This implies that $\lambda(f^{[4] \bs J}_\infty)$ is equal to the cone spectral radius of the function
$$\begin{bmatrix} x_3 \\ x_4 \end{bmatrix} \mapsto 
\begin{bmatrix}
a_3 x_3 + b_3 \theta(x_1,x_2)+c_3 x_4 + d_3 x_3\\
a_4 x_4 + b_4 \theta(x_1,x_2)+c_4 x_4 + d_4 x_3\\
\end{bmatrix}.$$
It is not hard to calculate the spectral radii of these two functions.  Since they are both defined on $\R^2_{>0}$, one can explicitly solve for an eigenvector of the form $\begin{bmatrix} \tau \\ 1-\tau \end{bmatrix}$ where $0 < \tau <1$. Once the eigenvector is known, then the corresponding eigenvalue can be computed directly. This is done in \cite[Lemma 3.9]{Nussbaum89}. 

\item[Case 2] $J = \{1,3\}$:

$$
r \left(
\begin{bmatrix}
a_1 x_1 \\
0 \\
a_3 x_3 \\
0 \\
\end{bmatrix} \right) < \lambda \left(
\begin{bmatrix}
\infty \\
a_2 x_2 + b_2 x_2 + c_2 x_4 + d_2 x_2 \\
\infty \\
a_4 x_4 + b_4 x_4 + c_4 x_4 + d_4 x_2 
\end{bmatrix} \right).
$$
In this case, both $r(f^J_0)$ and $\lambda(f^{[4] \bs J}_\infty)$ are eigenvalues of linear maps which correspond to the 2-by-2 nonnegative matrices 
$$\begin{bmatrix} a_1 & 0 \\ 0 & a_3 \end{bmatrix} \text{ and } \begin{bmatrix} a_2+b_2+d_2 & c_2 \\ d_4 & a_4+b_4+c_4 \end{bmatrix}$$ 
respectively. Since we have assumed that $c_2, d_4 > 0$, the later matrix is irreducible and so has a unique positive eigenvector (up to scaling) with eigenvalue equal to $\lambda(f^{[4] \bs J}_\infty)$. It is easy to see that $r(f^J_0) = \max \{a_1,a_3\}$. Since we assumed that $a_1 < a_2+b_2$ and $a_3 < a_4+b_4$, we see that $r(f^J_0) < \lambda(f^{[4] \bs J}_\infty)$ is always true in this case.  

\item[Case 3] $J = \{2,4\}$:
$$r\left(
\begin{bmatrix}
0 \\
a_2 x_2 \\
0 \\
a_4 x_4 \\
\end{bmatrix} \right) < 
\lambda \left( 
\begin{bmatrix}
a_1 x_1 + b_1 x_1 + c_1 x_1 + d_1 x_3 \\
\infty \\
a_3 x_3 + b_3 x_3 + c_3 x_1 + d_3 x_3 \\
\infty \\
\end{bmatrix} \right).$$
Similar to Case 2, this condition is automatically true since $a_2 < a_1+b_1$ and $a_4 < a_3+b_3$.

\item[Case 4] $J = \{3,4\}$:
$$r\left(
\begin{bmatrix}
0 \\
0 \\
a_3 x_3 + b_3 \theta(x_3,x_4) \\
a_4 x_4 + b_4 \theta(x_3,x_4) 
\end{bmatrix} \right) < 
\lambda \left( 
\begin{bmatrix}
a_1 x_1 + b_1 \theta(x_1,x_2) + c_1 x_1 + d_1 x_2 \\
a_2 x_2 + b_2 \theta(x_1,x_2) + c_2 x_1 + d_2 x_2 \\
\infty \\
\infty \\
\end{bmatrix} \right).$$
Similar to Case 1, the function $f^J_0$ has a nonempty and $d_H$-bounded eigenspace in $\R^J_{>0}$ since $a_3 < a_4+b_4$ and $a_4 < a_3+b_3$.  By Theorem \ref{thm:quick}, if we verify the inequality above for $J = \{3,4\}$, then we will also verify \eqref{superIllum} for $\{3\}$ and $\{4\}$ as well.  The function $f^{[4] \bs J}_\infty$ also has an eigenvector in $\R^{[4] \bs J}_{>0}$.  So it is possible to explicitly compute both $r(f^{\{3,4\}}_0)$ and $\lambda(f^{\{1,2\}}_\infty)$ in order to check whether or not $r(f^{\{3,4\}}_0) < \lambda(f^{\{1,2\}}_\infty)$.
%%Using the function $\sigma$ to re-express this condition, we must have
%%\begin{equation} \label{Schoen2}
%%\sigma(a_3, b_3, 0, a_4, b_4, 0) < \sigma(a_1+c_1, b_1, d_1, a_2+c_2, b_2, d_2)
%%%r\left(
%%%\begin{bmatrix}
%%%a_3 x_3 + b_3 \theta(x_3,x_4) \\
%%%a_4 x_4 + b_4 \theta(x_3,x_4) 
%%%\end{bmatrix} \right) < 
%%%\lambda \left( 
%%%\begin{bmatrix}
%%%a_1 x_1 + b_1 \theta(x_1,x_2) + c_1 x_1 + d_1 x_2 \\
%%%a_2 x_2 + b_2 \theta(x_1,x_2) + c_2 x_1 + d_2 x_2 \\
%%%\end{bmatrix} \right)
%%\end{equation}
%in order for $f$ to have an eigenvector in $\R^4_{>0}$. 
\end{description}
Together the conditions $r(f^{\{1,2\}}_0) < \lambda(f^{\{3,4\}}_\infty)$ and $r(f^{\{3,4\}}_0) < \lambda(f^{\{1,2\}}_\infty)$ from Case 1 and Case 4 are necessary and sufficient for $f$ to have an eigenvector in $\R^4_{>0}$.  Note that these two inequalities are equivalent to the conditions given in \cite[Theorem 3.9]{Nussbaum89}.  The method used to find these conditions in \cite{Nussbaum89} involves finding all eigenvectors of $f$ corresponding to $r(f)$ on the boundary of $\R^4_{\ge 0}$ and then checking a condition on a Gateaux derivative at each one.  Our method is simpler and always results in a set of necessary and sufficient conditions for any order-preserving homogeneous function $f:\R^n_{>0} \rightarrow \R^n_{>0}$ to have a nonempty and $d_H$-bounded eigenspace.

\end{example}

\subsection{Nonnegative tensors} \label{sec:nonnegtensor}

An \emph{order-$d$ tensor} is an array of real numbers $\mathcal{A} = [\![ a_{j_1\cdots j_d} ]\!] \in \R^{n_1 \times \cdots \times n_d}$. Here we focus tensors with constant dimension $n_1 = \ldots = n_d = n$. The nonnegative eigenvalue problem for tensors seeks to find an eigenvalue $\lambda \in \R$ and an eigenvector $x \in \R^n$ such that 
$$\mathcal{A}x^{(d-1)} = \lambda x^{[d-1]}$$
where $x^{[d-1]}:= (x_1^{d-1}, \ldots, x_n^{d-1})$ and
$$(\mathcal{A}x^{(d-1)})_i := \sum_{1 \le j_2, \ldots, j_d \le n} a_{i j_2 \cdots j_d} x_{j_2} \cdots x_{j_n}.$$
This problem was independently introduced by Lim \cite{Lim05} and Qi \cite{Qi05}.  It is referred to by Qi as the H-eigenproblem to distinguish it from the Z-eigenproblem which seeks eigenpairs $(\lambda, x) \in \R \times \R^n$ such that 
$$\mathcal{A}x^{(d-1)} = \lambda x.$$

If the entries of $\mathcal{A}$ are all nonnegative, then we can rephrase the H-eigenproblem by letting 
$$f(x)_i = ((\mathcal{A}x^{(d-1)})_i)^{1/(d-1)} \text{ for } i \in [n].$$
Then $f: \R^n_{\ge 0} \rightarrow \R^n_{\ge 0}$ is order-preserving and homogeneous.  As long as there is a positive entry $a_{i j_2 \cdots j_d}$ in $\mathcal{A}$ for every $i \in [n]$, then $f$ maps $\R^n_{>0}$ into $\R^n_{>0}$ and we can ask whether or not $f$ has an eigenvector $x \in \R^n_{>0}$. If there is, then the eigenpair $(r(f)^{d-1},x)$ is a solution to the H-eigenproblem.  This problem and related generalizations have been studied in the context of general nonlinear Perron-Frobenius theory before, see e.g., \cite{AkGaHo20,ChPeZh08,FrGaHa13,GaTuHe19}.     

In \cite[Theorem 5]{HuQi16}, Hu and Qi give necessary and sufficient conditions for an order-$d$ nonnegative tensor $\mathcal{A} \in \R^{n \times \cdots \times n}$ to have an H-eigenvector in $\R^n_{>0}$. Their condition does not require or imply that the eigenspace is bounded in Hilbert's projective metric. Here we observe that the order-preserving homogeneous function $f$ associated with a nonnegative tensor $\mathcal{A}$ is in the class $\mathcal{M}_+$.  Therefore Theorem \ref{thm:Mplus} gives necessary and sufficient conditions for $f$ to have a unique eigenvector in $\R^n_{>0}$, up to scaling.  

%Here we note that the condition in Theorem \ref{thm:super} is necessary and sufficient for the set of H-eigenvectors to be nonempty and bounded in $(\R^n_{>0},d_H)$.  Since the order-preserving homogeneous maps $f$ associated with the H-eigenproblem always have real analytic entries on $\R^n_{>0}$, Theorem \ref{thm:super} also gives a necessary and sufficient condition for there to be a unique entrywise positive H-eigenvector up to scaling.  

% \hl{There is a lot to point out here: You should explain what this has to do with nonnegative tensors, how they are log-convex functions, and how that helps improve the hypergraph algorithm.}  

%\hl{QUESTION: The condition of Theorem 3.1 is always necessary and sufficient for existence and uniqueness of a positive eigenvector for the H-eigenvalue problem b/c those tensor maps are analytic. But does the convergence result always also apply?  Well, $\G(f'(u))$ always has a unique final class, since $u$ will be a unique eigenvector... but I don't think that final class has to be primitive? Hmm... even if iterates don't always converge, is the condition in section 5 necessary and sufficient for iterates to converge to a positive eigenvector?  }

\begin{example} \label{ex:tensor}
The following example corresponds to an order-3 nonnegative tensor in $\R^{4 \times 4 \times 4}$.  It is a slight variation of \cite[Example 5.4]{AkGaHo20}.  Unlike that example, the hypergraph condition of Theorem \ref{thm:AGH} fails for this map. It turns out that the existence of an eigenvector in $\R^n_{>0}$ will depend on the values of the parameters of the function $f$, and Theorem \ref{thm:Mplus} lets us give precise necessary and sufficient conditions for this map to have a unique (up to scaling) eigenvector in $\R^4_{>0}$. 
$$f\left( \begin{bmatrix} x_1 \\ x_2 \\ x_3 \\ x_4 \end{bmatrix} \right) := 
\begin{bmatrix}
\sqrt{a_1 x_1 x_2 + a_2 x_2^2} \\
\sqrt{b_1 x_1^2 + b_2 x_1 x_2 + b_3 x_2^2} \\
\sqrt{c_1 x_1^2 + c_2 x_1 x_2 + c_3 x_2 x_3} \\
\sqrt{d_1 x_1 x_4 + d_2 x_3^2 + d_3 x_4^2} \\
\end{bmatrix}.$$
%\hl{Because the map $\log \circ f \circ \exp$ is convex, the minimal hyperarcs of $\Hp$ are all regular arcs.  This makes it easy to determine the invariant sets for $\Hp$. Is that right???}

\begin{figure}[ht] \label{fig:Hpm2}
\begin{center}
\begin{tikzpicture}
\begin{scope}
\draw (-1.75,0.5) node {$\Hm$};
\path  (45:1) node[draw,shape=circle,scale=0.75] (A1) {1};
\path (135:1) node[draw,shape=circle,scale=0.75] (A2) {2};
\path (225:1) node[draw,shape=circle,scale=0.75] (A3) {3};
\path (315:1) node[draw,shape=circle,scale=0.75] (A4) {4};

\draw[thick,<-] (A1) to [bend right=30] (A2);
\draw[thick,->] (A1) to [bend right=20] (A3);
\draw[thick,->] (A2) to [bend left=30] (A3);

\fill[top color=white, bottom color=black,opacity=0.5] (A1) to [bend right=20] (A3) to [bend right=30] (A2);
\end{scope}

\begin{scope}[xshift=5cm]
\draw (-1.75,0.5) node {$\Hp$};
\path  (45:1) node[draw,shape=circle,scale=0.75] (B1) {1};
\path (135:1) node[draw,shape=circle,scale=0.75] (B2) {2};
\path (225:1) node[draw,shape=circle,scale=0.75] (B3) {3};
\path (315:1) node[draw,shape=circle,scale=0.75] (B4) {4};

\draw[thick,->] (B1) to [bend right=20] (B2);
\draw[thick,->] (B1) to [bend left=0] (B3);
\draw[thick,->] (B1) to [bend left=0] (B4);
\draw[thick,->] (B2) to [bend right=20] (B1);
\draw[thick,->] (B2) to [bend right=0] (B3);
\draw[thick,->] (B3) to [bend right=0] (B4);
\end{scope}

\end{tikzpicture}
\end{center}
\caption{The hypergraphs $\Hm$ and $\Hp$ for the maps in Example \ref{ex:tensor}. }
\end{figure}

The minimal hyperarcs of $\Hp$ and $\Hm$ are shown in Figure \ref{fig:Hpm2}. 
%It is interesting to note that the minimal hyperarcs of $\Hp$ are simple arcs. This always happens for order-preserving homogeneous functions $f:\R^n_{>0} \rightarrow \R^n_{>0}$ which have the property that $\log \circ f \circ \exp$ is a convex function. See \cite[]{} \hl{Actually, I don't think that this is ever well explained. Maybe it would be worth explaining here?  Maybe not? 
The only invariant subsets of $\Hp$ are $J = \{3,4\}$ and $J = \{4\}$.  Observe that $\reach(\{4\}^c,\Hm) = \{1,2,3\}$ and $\reach(\{3,4\}^c,\Hm) = \{1,2,3\}$, so both possible choices of $J$ fail the conditions in Corollary \ref{cor:AGH2}. 

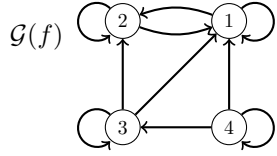
\begin{figure}[ht] \label{fig:G}
\begin{center}
\begin{tikzpicture}

\draw (-1.85,0.5) node {$\G(f)$};
\path  (45:1) node[draw,shape=circle,scale=0.75] (B1) {1};
\path (135:1) node[draw,shape=circle,scale=0.75] (B2) {2};
\path (225:1) node[draw,shape=circle,scale=0.75] (B3) {3};
\path (315:1) node[draw,shape=circle,scale=0.75] (B4) {4};

\draw[thick,->] (B1) to [bend right=20] (B2);
\draw[thick,->] (B3) to [bend left=0] (B1);
\draw[thick,->] (B4) to [bend left=0] (B1);
\draw[thick,->] (B2) to [bend right=20] (B1);
\draw[thick,->] (B2.135) arc(45:315:2.5mm);
\draw[thick,->] (B3.135) arc(45:315:2.5mm);
\draw[thick,->] (B3) to [bend right=0] (B2);
\draw[thick,->] (B4) to [bend right=0] (B3);
\draw[thick,->] (B4.45) arc(135:-135:2.5mm);
\draw[thick,->] (B1.45) arc(135:-135:2.5mm);

\end{tikzpicture}
\end{center}
\caption{The directed graph $\G(f)$ in Example \ref{ex:tensor}. }
\end{figure}

Since $f \in \mathcal{M}_+$, Theorem \ref{thm:Mplus} applies to $f$. The directed graph $\G(f)$ is shown in Figure \ref{fig:G}.  Observe that $\G(f)$ has a unique final class $C = \{1,2\}$.  Therefore $f$ has an eigenvector in $\R^4_{>0}$ (which is necessarily unique up to scaling) if and only if $f$ is strongly nonnegative, which by Lemma \ref{lem:classCap} is equivalent to $r(f^{\{3,4\}}_0) < r(f^{\{1,2\}}_0)$.  
Writing this condition down, we have

$$r \left( 
\begin{bmatrix}
0 \\
0 \\
0 \\
\sqrt{d_2 x_3^2 + d_3 x_4^2} \\
\end{bmatrix} \right) < r \left(
\begin{bmatrix}
\sqrt{a_1 x_1 x_2 + a_2 x_2^2} \\
\sqrt{b_1 x_1^2 + b_2 x_1 x_2 + b_3 x_2^2} \\
0 \\
0 \\
\end{bmatrix}\right).$$ 
It is easy to see that $r(f^{\{3,4\}}_0) = \sqrt{d_3}$, so the necessary and sufficient condition for $f$ to have an (unique) eigenvector in $\R^4_{>0}$ can be stated as follows
$$\sqrt{d_3}< r \left(
\begin{bmatrix}
\sqrt{a_1 x_1 x_2 + a_2 x_2^2} \\
\sqrt{b_1 x_1^2 + b_2 x_1 x_2 + b_3 x_2^2} \\
\end{bmatrix}\right).$$
\end{example}

\subsection{Topical functions and stochastic games}

The methods of this paper also apply to \emph{topical functions} which are order-preserving and additively homogeneous functions $T: \R^n \rightarrow \R^n$.  A function is \emph{additively homogeneous} if $T(x+c \one) = T(x) + c \one$ for all $x \in \R^n$ and $c \in \R$.  For a topical function $T$, we say that $x \in \R^n$ is an \emph{additive eigenvector} with \emph{eigenvalue} $\lambda \in \R$ if $T(x) = x+\lambda \one$.  If $T$ is topical, then the function $f = \exp \circ T \circ \log$ is order-preserving and homogeneous on $\R^n_{>0}$. In fact, the functions $\exp$ and $\log$ are order-preserving isometries between $(\R^n_{>0},d_H)$ and $(\R^n,\|\cdot\|_\text{var})$, were $\|\cdot\|_\text{var}$ is the variation norm defined in \eqref{varnorm}. Therefore $T$ is nonexpansive with respect to the variation norm. 
Furthermore, $T$ has an additive eigenvector $x \in \R^n$ if and only if $\exp(x)$ is an eigenvector of $f$ in $\R^n_{>0}$.  Thus all of the existence and uniqueness results for eigenvectors of order-preserving homogeneous functions on $\R^n_{>0}$ can be translated to corresponding results about additive eigenvectors for topical functions on $\R^n$. In some applications, it is more natural to work in the additive setting.  

Working with topical functions requires some adjustments to the notation and terminology of the previous sections.  
Let $T: \R^n \rightarrow \R^n$ be topical.  We define the upper and lower Collatz-Wielandt numbers of a topical function to be
$$r(T) := \inf_{x \in \R^n} \max_{i \in [n]} T(x)_i - x_i$$
and 
$$\lambda(T) := \sup_{x \in \R^n} \min_{i \in [n]} T(x)_i - x_i$$
respectively. There are iterative formulas analogous to \eqref{rlim} for both 
$$r(T) = \lim_{k \rightarrow \infty} \max_{i \in [n]} \frac{T^k(x)_i}{k}$$
and 
$$\lambda(T) = \lim_{k \rightarrow \infty} \min_{i \in [n]} \frac{T^k(x)_i}{k},$$
and both formulas hold for all $x \in \R^n$.

As with order-preserving homogeneous functions on $\R^n_{>0}$, topical functions extend continuously to $(-\infty,\infty]^n$ and to $[-\infty, \infty)^n$ \cite[Theorem 1]{BuSp00}, and the extensions are order-preserving and additively homogeneous \cite[Lemmas 6 \& 7]{BuSp00}.  We can extend the definition of the upper Collatz-Wielandt number $r(T)$ to order-preserving, additively homogeneous $T:[-\infty, \infty)^n \rightarrow [-\infty,\infty)^n$ and likewise, we can extend the definition of $\lambda(T)$ to order-preserving additively homogeneous $T:(-\infty, \infty]^n \rightarrow (-\infty, \infty]^n$.  For any subset $J \subset [n]$, we define $T^J_{-\infty} = P^J_{-\infty} T P^J_{-\infty}$ and $T^J_{\infty} = P^J_{\infty} T P^J_{\infty}$.  With this notation, we have the following restatement of Theorem \ref{thm:super}. 

\begin{theorem} \label{thm:topical}
Let $T: \R^n \rightarrow \R^n$ be topical.  The set of additive eigenvectors of $T$ is nonempty and bounded in the variation norm if and only if 
\begin{equation} \label{topicalCond}
r(T^J_{-\infty}) < \lambda(T^{[n] \bs J}_{\infty})
\end{equation}
for every nonempty proper subset $J \subset [n]$.  
\end{theorem}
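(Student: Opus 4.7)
The plan is to reduce Theorem \ref{thm:topical} to Theorem \ref{thm:super} via the exp/log conjugation already introduced in the text immediately preceding the statement. Given a topical $T$, set $f := \exp \circ T \circ \log$, so that $f:\R^n_{>0} \rightarrow \R^n_{>0}$ is order-preserving and homogeneous. Because $\exp$ and $\log$ are mutually inverse order-preserving isometries between $(\R^n,\|\cdot\|_{\text{var}})$ and $(\R^n_{>0},d_H)$ (a direct computation gives $d_H(\exp(x),\exp(y)) = \max_i(y_i-x_i) - \min_j(y_j-x_j) = \|y-x\|_{\text{var}}$), $x$ is an additive eigenvector of $T$ if and only if $\exp(x)$ is an eigenvector of $f$ in $\R^n_{>0}$, and the additive eigenvector set of $T$ is variation-norm bounded if and only if $E(f)$ is $d_H$-bounded.

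Next I would establish the following dictionary under conjugation. Since $\exp(-\infty)=0$ and $\exp(+\infty)=\infty$, the projections satisfy $\exp \circ P^J_{-\infty} = P^J_0 \circ \exp$ and $\exp \circ P^J_\infty = P^J_\infty \circ \exp$ on the appropriate extended domains, so
\[
f^J_0 = \exp \circ T^J_{-\infty} \circ \log \quad \text{and} \quad f^{[n]\bs J}_\infty = \exp \circ T^{[n]\bs J}_\infty \circ \log.
\]
Moreover, for any order-preserving additively homogeneous $S$ (possibly defined on an extended domain), the substitution $y=\log x$ in the Collatz-Wielandt definitions yields
\[
\log r(\exp \circ S \circ \log) = r(S) \quad \text{and} \quad \log \lambda(\exp \circ S \circ \log) = \lambda(S),
\]
since $\log\bigl((\exp \circ S \circ \log)(x)_i/x_i\bigr) = S(y)_i - y_i$. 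Applying this with $S = T^J_{-\infty}$ and $S = T^{[n]\bs J}_\infty$ shows that $r(f^J_0) < \lambda(f^{[n]\bs J}_\infty)$ is equivalent to $r(T^J_{-\infty}) < \lambda(T^{[n]\bs J}_\infty)$.

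With these three pieces in place, Theorem \ref{thm:super} applied to $f$ immediately gives Theorem \ref{thm:topical}. The only real obstacle is bookkeeping on the boundary: one needs to confirm that the continuous extensions of $T$ to $(-\infty,\infty]^n$ and $[-\infty,\infty)^n$ provided by Burbanks--Sparrow's topical version of Theorem \ref{thm:BS} are compatible with the $\exp$/$\log$ conjugation on the compactified orthant, so that $T^J_{-\infty}$ and $T^{[n]\bs J}_\infty$ conjugate cleanly to $f^J_0$ and $f^{[n]\bs J}_\infty$. This reduces to checking that $\exp$ sends the relevant limits of coordinates at $-\infty$ to $0$ and at $+\infty$ to $+\infty$ continuously, which is routine from the cited continuity results.
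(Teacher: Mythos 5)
Your proposal is correct and follows essentially the same route as the paper, which presents Theorem \ref{thm:topical} precisely as a restatement of Theorem \ref{thm:super} obtained via the conjugation $f = \exp \circ T \circ \log$, using that $\exp$ and $\log$ are order-preserving isometries between $(\R^n,\|\cdot\|_\text{var})$ and $(\R^n_{>0},d_H)$ and that the boundary extensions, projections, and Collatz-Wielandt numbers transform as you describe.
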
 

Following \cite{AkGaHo20}, we define two hypergraphs $\HmT$ and $\HpT$ for any topical function $T: \R^n \rightarrow \R^n$ as follows. The nodes of both hypergraphs are the set $[n]$ and
the hyperarcs of $\HmT$ (respectively, $\HpT$) are pairs $(I,\{j\})$ such that $I \subset [n]$, $j \in [n] \bs I$, and
$$\lim_{t \rightarrow -\infty} T(te_I)_j = -\infty \hspace*{1cm} (\text{resp.}, \lim_{t \rightarrow \infty} T(te_I)_j = \infty).$$
Note that $\HmT = \Hm$ and $\HpT = \Hp$ when $f = \exp \circ T \circ \log$.  Therefore each of the results in Subsection \ref{sec:hypergraph} can be translated directly to the additive setting.  In particular, we have the following corollary of Lemma \ref{lem:connect}.

\begin{lemma} \label{lem:connect2}
Let $T: \R^n \rightarrow \R^n$ be topical and let $J$ be a proper nonempty subset of $[n]$.  If $\reach(J^c,\HmT) = [n]$, then $r(T^J_{-\infty}) = -\infty$.  If $\reach(J,\HpT) = [n]$, then $r(T^{[n] \bs J}_\infty) = \infty$. In either case, \eqref{topicalCond} holds for $J$.  
\end{lemma}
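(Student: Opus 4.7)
The plan is to deduce Lemma \ref{lem:connect2} from Lemma \ref{lem:connect} via the logarithmic change of coordinates. Given a topical $T$, define $f := \exp \circ T \circ \log : \R^n_{>0} \to \R^n_{>0}$, which is order-preserving and homogeneous. Comparing the defining limits shows that $\mathcal{H}^-_0(f) = \HmT$ and $\mathcal{H}^+_\infty(f) = \HpT$: for instance, $\lim_{t \to \infty} f(\exp(-te_I))_j = 0$ holds if and only if $\lim_{t \to -\infty} T(te_I)_j = -\infty$. Hence the $\reach$ of any subset in the additive hypergraphs coincides with the $\reach$ in the multiplicative ones.

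Next I would verify the dictionary between boundary Collatz--Wielandt numbers,
\begin{equation*}
r(f^J_0) = \exp\bigl(r(T^J_{-\infty})\bigr) \qquad \text{and} \qquad \lambda(f^{[n] \bs J}_\infty) = \exp\bigl(\lambda(T^{[n] \bs J}_\infty)\bigr),
\end{equation*}
under the conventions $\exp(-\infty) = 0$ and $\exp(\infty) = \infty$. This rests on the identities $f^J_0 = \exp \circ T^J_{-\infty} \circ \log$ and $f^{[n] \bs J}_\infty = \exp \circ T^{[n] \bs J}_\infty \circ \log$, which hold because $\exp$ intertwines $P^J_{-\infty}$ with $P^J_0$ and $P^J_\infty$ with itself. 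After substituting $y = \exp(x)$ (equivalently $x = \log y$), each ratio $f^J_0(y)_i/y_i$ becomes a difference $T^J_{-\infty}(x)_i - x_i$, so the inf--max (respectively sup--min) defining $r$ (respectively $\lambda$) transfers through the entrywise exponential, with the $\pm\infty$ conventions making the extreme cases work cleanly (entries where the projection sends values to $\pm\infty$ contribute $0$ or $\infty$ to the max/min respectively, consistent with the additive limit).

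With this dictionary in hand, both assertions of Lemma \ref{lem:connect2} follow immediately from the implications (b)$\Rightarrow$(c) and (e)$\Rightarrow$(f) of Lemma \ref{lem:connect}: $\reach(J^c,\HmT)=[n]$ gives $r(f^J_0)=0$, hence $r(T^J_{-\infty}) = -\infty$, and dually $\reach(J,\HpT)=[n]$ gives $\lambda(T^{[n] \bs J}_\infty) = \infty$. For the last claim that \eqref{topicalCond} holds in either case, I would observe that $r(T^J_{-\infty}) \le \max_i T(0)_i < \infty$ (evaluate the defining infimum at $x = 0$ and use order-preservation of $T$ together with $P^J_{-\infty}(0) \le 0$) and symmetrically $\lambda(T^{[n] \bs J}_\infty) \ge \min_i T(0)_i > -\infty$. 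Thus whenever one boundary number reaches its extreme $\pm\infty$ value, the strict inequality $r(T^J_{-\infty}) < \lambda(T^{[n] \bs J}_\infty)$ is automatic. I don't foresee any genuine obstacle; the only bookkeeping required is in tracking the $\pm\infty$ conventions through the exp/log substitution, which the intertwining identities $\exp \circ P^J_{-\infty} = P^J_0 \circ \exp$ and $\exp \circ P^J_\infty = P^J_\infty \circ \exp$ handle cleanly.
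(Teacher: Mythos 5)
Your proposal is correct and follows essentially the paper's own route: the paper obtains Lemma \ref{lem:connect2} as a direct corollary of Lemma \ref{lem:connect} by noting that $\HmT = \Hm$ and $\HpT = \Hp$ for $f = \exp \circ T \circ \log$, which is exactly the translation you carry out in detail (the hypergraph identification, the Collatz--Wielandt dictionary $r(f^J_0) = \exp(r(T^J_{-\infty}))$, $\lambda(f^{[n]\bs J}_\infty) = \exp(\lambda(T^{[n]\bs J}_\infty))$, and the finiteness bounds at $x=0$ needed for \eqref{topicalCond}). One remark: you prove $\lambda(T^{[n]\bs J}_\infty) = \infty$ rather than the stated $r(T^{[n]\bs J}_\infty) = \infty$; since $\lambda \le r$ this is the stronger conclusion, it is evidently what the statement intends, and it is precisely what the final claim that \eqref{topicalCond} holds requires.
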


%which are the Shapley operators associated with two-player zero sum stochastic games.  
Our next application is one where it is more natural to work in an additive setting.
A (finite) zero-sum \emph{stochastic game} consists of the following:
\begin{enumerate}
\item A finite state space, which we will assume is $[n]$,
\item two finite sets of actions $A$, $B$ (one for each player),
\item a payoff function $r:[n] \times A \times B \rightarrow \R$, and
\item a transition function $\rho$ which assigns a probability vector in $\R^n$ to each $(i,a,b) \in [n] \times A \times B$.
\end{enumerate}
At each stage of the game, Player A pays Player B a payoff $r(i,a,b)$ which depends on the state $i$ and the choices $a \in A$ and $b \in B$ of the two players during that stage. Then the state of the game transitions from $i$ to $j$ with probability equal to $\rho(i,a,b)_j$ for the next stage of the game. Naturally, player A wishes the payoffs at each stage to be as small as possible, while player B seeks the opposite. 

The \emph{Shapley operator} for a turn-based perfect-information stochastic game is a topical function $T: \R^n \rightarrow \R^n$ given by 
$$T(x)_i = \inf_{a \in A} \sup_{b \in B} \left( r(i,a,b) + \sum_{j = 1}^n x_j \rho(i,a,b)_j \right).$$
Shapley operators were originally introduced in \cite{Shapley53} to find the expected payoffs of repeated stochastic games where players discount future payoffs. Shortly after, Gillette \cite{Gillette57} studied the average payoff of repeated stochastic games without discounting. See e.g., \cite[Chapter 5]{Sorin} and the references therein for more details. 
Given a stochastic game in state $i$, the Shapley operator lets us recursively compute the expected total payoff from player A to player B over $k$ stages if both players are following their optimal strategies. The expected total payoff in that case is $T^k(0)_i$. If $T$ has an additive eigenvector, then the long run average payoff 
$$\lim_{k \rightarrow \infty} \frac{T^k(0)_i}{k}$$
does not depend on the initial state $i$, and is equal to $r(T)$.   

\begin{example} \label{ex:game}
Consider a simple two-player game with three states. Suppose that player B controls what happens in state 1, while player A controls states 2 and 3. In state 1, player B can choose to stay in state 1, in which case the payoff (from player A to B) is $r_1$.  Or player B can choose a different payoff $r_2$, but then the state will either remain in state 1 with probability $p_1$, or transition to state 2 with probability $1-p_1$ for the next stage. In state 2, player A chooses between payoffs $r_3$ and $r_4$. Choosing $r_3$ keeps the game in state 2, while $r_4$ sends the game to state 1 with probability $p_2$ or to state 3 with probability $1-p_2$.  Finally, in state 3, player A can choose between staying in state 3 for a payoff of $r_5$, or transitioning the game to state 1 for a payoff of $r_6$.  Figure \ref{fig:game} shows the states and transition choices facing the players.

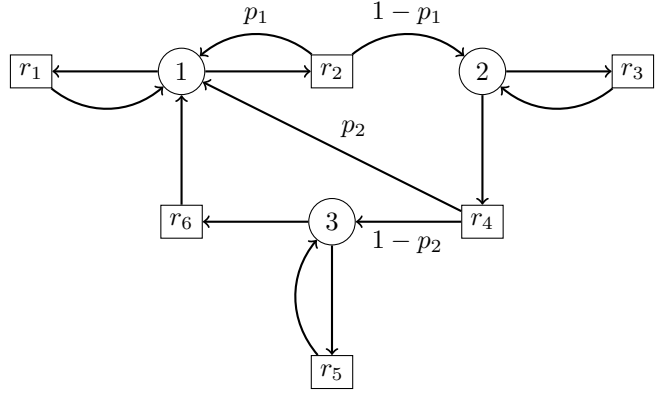
\begin{figure}[ht] \label{fig:game}
\begin{center}
\begin{tikzpicture}
\path (-4,2) node[draw,shape=rectangle,scale=1] (r1) {$r_1$};
\path (-2,2) node[draw,shape=circle,scale=1] (s1) {1};
\path  (0,2) node[draw,shape=rectangle,scale=1] (r2) {$r_2$};
\path  (2,2) node[draw,shape=circle,scale=1] (s2) {2};
\path  (4,2) node[draw,shape=rectangle,scale=1] (r3) {$r_3$};
\path (-2,0) node[draw,shape=rectangle,scale=1] (r6) {$r_6$};
\path (0,-2) node[draw,shape=rectangle,scale=1] (r5) {$r_5$};
\path (2,0) node[draw,shape=rectangle,scale=1] (r4) {$r_4$};
\path (0,0) node[draw,shape=circle,scale=1] (s3) {3};

\draw[thick,->] (s1) to (r1);
\draw[thick,->] (s1) to (r2);
\draw[thick,->] (s2) to (r3);
\draw[thick,->] (s2) to (r4);
\draw[thick,->] (s3) to (r5);
\draw[thick,->] (s3) to (r6);

\draw[thick,black,->] (r1) to [bend right=40] (s1);
\draw[thick,black,->] (r2) to [bend right=40] (s1);
\draw[thick,black,->] (r2) to [bend left=40] (s2);
\draw[thick,black,->] (r3) to [bend left=40] (s2);
\draw[thick,black,->] (r4) to (s3);
\draw[thick,black,->] (r4) to (s1);
\draw[thick,black,->] (r5) to [bend left=40] (s3);
\draw[thick,black,->] (r6) to (s1);

\draw (-1,2.5) node[above] {$p_1$};
\draw (1,2.5) node[above] {$1-p_1$};
\draw (0,1) node[above right] {$p_2$};
\draw (1,0) node[below] {$1-p_2$};
\end{tikzpicture}
\end{center}
\caption{The game board for Example \ref{ex:game}.}
\end{figure}

The Shapley operator for this game is 
$$T(x) := \begin{bmatrix} (r_1 + x_1) \vee (r_2 + p_1 x_1 + (1-p_1) x_2) \\
(r_3 + x_2) \wedge (r_4+p_2 x_1 + (1-p_2) x_3) \\
(r_5+ x_3) \wedge (r_6 + x_1) 
\end{bmatrix}$$
where $\vee$ is the max operation and $\wedge$ denotes min.  The minimal hyperarcs of the hypergraphs $\mathcal{H}^+_\infty(T)$ and $\mathcal{H}^-_\infty(T)$ are shown in Figure \ref{fig:HT}.

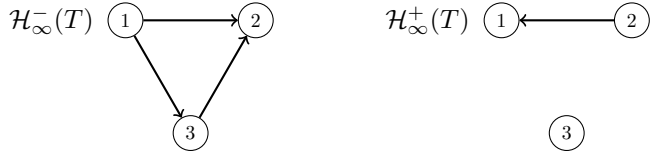
\begin{figure}[ht] \label{fig:HT}
\begin{center}
\begin{tikzpicture}
\begin{scope}
\draw (-1.85,0.5) node {$\mathcal{H}^-_\infty(T)$};
\path  (150:1) node[draw,shape=circle,scale=0.75] (A1) {1};
\path (30:1) node[draw,shape=circle,scale=0.75] (A2) {2};
\path (-90:1) node[draw,shape=circle,scale=0.75] (A3) {3};

\draw[thick,->] (A1) to (A2);
\draw[thick,->] (A1) to (A3);
\draw[thick,->] (A3) to (A2);
\end{scope}

\begin{scope}[xshift=5cm]
\draw (-1.85,0.5) node {$\mathcal{H}^+_\infty(T)$};
\path (150:1) node[draw,shape=circle,scale=0.75] (B1) {1};
\path (30:1) node[draw,shape=circle,scale=0.75] (B2) {2};
\path (-90:1) node[draw,shape=circle,scale=0.75] (B3) {3};

\draw[thick,->] (B2) to (B1);
\end{scope}

\end{tikzpicture}
\end{center}
\caption{The hypergraphs $\mathcal{H}^-_\infty(T)$ and $\mathcal{H}^+_\infty(T)$ for the Shapley operator in Example \ref{ex:game}. }
\end{figure}

Every proper subset $J \subset [3] = \{1,2,3\}$ either has $\reach(J^c,\HmT) = [3]$ or $\reach(J,\HpT)  = [3]$, except $J = \{1,3\}$ and $J = \{1\}$. Therefore, Theorem \ref{thm:topical} and Lemma \ref{lem:connect2} imply that $T$ has a nonempty and bounded set of additive eigenvectors in $(\R^n,\|\cdot\|_\text{var})$ if and only if the following two inequalities hold.  

\begin{description}
\item[Case 1] $J = \{1\}$
$$r_1 = r\left( \begin{bmatrix} 
r_1 + x_1 \\
-\infty \\
-\infty
\end{bmatrix} \right) < \lambda \left( \begin{bmatrix}
\infty \\
r_3 + x_2 \\
r_5 + x_3 
\end{bmatrix} \right) = \min \{r_3, r_5\}.$$

\item[Case 2] $J = \{1,3\}$
$$r_1 = r\left( \begin{bmatrix} 
r_1 + x_1 \\
-\infty \\
(r_5+x_3) \wedge (r_6+x_1)
\end{bmatrix} \right) < \lambda \left( \begin{bmatrix}
\infty \\
r_3 + x_2 \\
\infty
\end{bmatrix} \right) = r_3.$$
\end{description}

From these two cases, we see that $r_1 < \min \{r_3, r_5\}$ is necessary and sufficient for the set of additive eigenvectors of $T$ to be nonempty and bounded in the variation norm. 

\end{example}

\subsection*{Acknowledgement} The author wishes to thank Roger Nussbaum and St\'ephane Gaubert for their careful reading and helpful comments.  Thanks also to Bas Lemmens for his encouragement and suggestions.

\bibliography{DW2}
\bibliographystyle{plain}

\end{document}